\theoremstyle{plain}
\newtheorem{thm}{Theorem}[section]
\newtheorem{proposition}[thm]{Proposition}
\newtheorem{conjecture}[thm]{Conjecture}
\newtheorem{theorem}[thm]{Theorem}
\newtheorem{lemma}[thm]{Lemma}
\newtheorem{question}[thm]{Question}
\newtheorem{corollary}[thm]{Corollary}
\newtheorem{example}[thm]{Example}
\theoremstyle{definition}
\newtheorem{definition}[thm]{Definition}
\newtheorem{remark}[thm]{Remark}
\newcommand{\RR}{\mathbb{R}}
\newcommand{\ZZ}{\mathbb{Z}}
\newcommand{\cS}{\mathcal{S}}
\newcommand{\cE}{\mathcal{E}}
\newcommand{\cC}{\mathcal{C}}
\newcommand{\cG}{\mathcal{G}}
\newcommand{\cA}{\mathcal{A}}
\newcommand{\cV}{\mathcal{V}}
\newcommand{\psd}{\succeq}
\newcommand{\pd}{\succ}
\newcommand{\tr}{\textup{tr}}
\newcommand{\Hyp}{\mathcal{H}}
\newcommand{\SOSHyp}{\mathcal{H}^{\textup{SOS}}}
\title{Certifying polynomial non-negativity\\ via hyperbolic optimization}
\author{James Saunderson\thanks{Department of Electrical and Computer Systems Engineering, Monash University,
VIC 3800, Australia. \texttt{james.saunderson@monash.edu}}}
\begin{document}
\maketitle
\begin{abstract}
We describe a new approach to certifying the global nonnegativity of
multivariate polynomials by solving hyperbolic optimization
problems---a class of convex optimization problems that generalize semidefinite
programs. We show how to produce families of nonnegative polynomials (which we
call hyperbolic certificates of nonnegativity) from any hyperbolic polynomial.
We investigate the pairs $(n,d)$ for which there is a hyperbolic polynomial of
degree $d$ in $n$ variables such that an associated hyperbolic certificate of
nonnegativity is not a sum of squares. If $d\geq 4$ we show that this occurs
whenever $n\geq 4$.  In the degree three case, we find an explicit hyperbolic
cubic in $43$ variables that gives hyperbolic certificates that are not sums of
squares. As a corollary, we obtain the first known hyperbolic cubic no power of which has a
definite determinantal representation. Our approach also allows us to 
show that, given a cubic $p$, and a direction $e$, the decision problem
``Is $p$ hyperbolic with respect to $e$?'' is co-NP hard.
\end{abstract}

\section{Introduction}
\label{sec:intro}

The problem of deciding nonnegativity of a multivariate polynomial is central
to solving optimization and feasibility problems expressed in terms of
polynomials. These, in turn, arise naturally in a wide range of applications
including control systems and robotics, combinatorial optimization, game
theory, and quantum information (see Section~\ref{sec:intro-pop} for further discussion, and
the edited volume~\cite{blekherman2012semidefinite} for an introduction to these 
ideas).

One of the benefits of a polynomial formulation of an optimization problem is
that one can then construct a hierarchy of more tractable `relaxations' of the
problem, based on the fact that being a sum of squares of polynomials is a 
sufficient condition for a polynomial to be nonnegative.
Deciding whether a polynomial is a sum of squares can be formulated as a
semidefinite programming feasibility
problem~\cite{shor1987class,nesterov2000squared,parrilo2003semidefinite,lasserre2001global}.
While such semidefinite programming-based relaxations of semialgebraic problems
have proven very useful for a range of problems, there has been notable recent
progress (both qualitative~\cite{scheiderer2018spectrahedral} and
quantitative~\cite{lee2015lower}) demonstrating the limitations of this approach.

Hyperbolic optimization problems (or hyperbolic programs) are a family of
convex optimization problems that generalize semidefinite optimization
problems~\cite{guler1997hyperbolic}. These involve maximizing a linear
functional over the intersection of an affine subspace and a hyperbolicity cone
(a convex cone constructed from a hyperbolic polynomial, which is a
multivariate polynomial with certain real-rootedness properties that we define
precisely in Section~\ref{sec:prelim}). Hyperbolic programs enjoy many of the
good algorithmic properties of semidefinite
programs~\cite{guler1997hyperbolic}.  Despite this, algorithms for hyperbolic
programming are less mature than those for semidefinite programming.  Indeed,
much of the recent research on hyperbolic programming has focused on trying to
reformulate various classes of hyperbolic programs as semidefinite programs, or on 
related geometric questions associated with the `generalized Lax conjecture'
and its variants~\cite{amini2019spectrahedrality,branden2014hyperbolicity,kummer2015hyperbolic,saunderson2015polynomial,saunderson2018spectrahedral}.
This lack of algorithmic development may be the result of not having generic
ways to produce hyperbolic programming-based formulations and/or relaxations of
polynomial optimization problems, beyond those already captured by
sums of squares. 

This paper introduces ways to construct families of nonnegative polynomials
that can be searched over via hyperbolic optimization. We call these
\emph{hyperbolic certificates of nonnegativity}. By appropriately choosing the
data that specify such a family of nonnegative polynomials, we can recover sums
of squares certificates of nonnegativity. As such, using the ideas in this
paper, we could construct hyperbolic programming-based relaxations of
polynomial feasibility and optimization problems by replacing sums of squares
relaxations with relaxations based on hyperbolic certificates of nonnegativity.
One significant challenge, not addressed in this paper, is that many choices
need to be made to specify a family of hyperbolic certificates of
nonnegativity. Given a specific structured class of polynomial optimization
problems, it is currently unclear which choices might be appropriate to obtain
strong hyperbolic programming-based relaxations using the framework presented
in this paper.

A main focus of the paper is the construction of hyperbolic polynomials for
which the associated hyperbolic certificates of nonnegativity are not sums of
squares.  These are of interest because they have the potential to form the
basis of hyperbolic programming-based relaxations of polynomial optimization
problems that are stronger than semidefinite programming-based relaxations of
comparable complexity.

\subsection{Contributions}

We now discuss the contributions of the paper in more technical detail, and
indicate where the main results appear in the paper.

\paragraph{Hyperbolic certificates of nonnegativity}
Given a hyperbolic polynomial $p$ of degree $d$ in $n$ variables and a
direction of hyperbolicity $e\in \RR^n$ (see Section~\ref{sec:hyp-prelim} for
the definition of these terms), we construct a polynomial map
\[ (x,y) \mapsto \phi_{p,e}(x,y)[\cdot]\]
 from $\RR^{n}\times\RR^d$ to linear functionals on $\RR^n$ such that
$\phi_{p,e}(x,y)[u] \geq 0$ for all $x,y$ if, and only if, $u$ is in the
hyperbolicity cone associated with $p$ and $e$ (Theorem~\ref{thm:main1}). This
slightly extends a related construction due to Kummer, Plaumann, and
Vinzant~\cite{kummer2015hyperbolic}. 

If $f$ and $g$ are polynomial maps, then $\phi_{p,e}(f(z),g(z))[u]$ is a
nonnegative polynomial in $z$ whenever $u$ is in the hyperbolicity cone
associated with $p$ and $e$. If a polynomial can be written this way for a
choice of hyperbolic polynomial $p$, direction of hyperbolicity $e$, and
polynomial maps $f$ and $g$, we say it has a \emph{hyperbolic certificate of
nonnegativity} (see Definition~\ref{def:hypnn}).  We can search for such a
description of a polynomial (for fixed $p$, $e$, $f$, and $g$) by solving a
hyperbolic optimization problem. Moreover, by appropriately specifying these
data, we can recover sums of squares certificates of nonnegativity
(Proposition~\ref{prop:cert-sos}).

\paragraph{Hyperbolic and SOS-hyperbolic polynomials}
If all nonnegative polynomials of the form $\phi_{p,e}(x,y)[u]$ are sums of
squares, we say that $p$ is SOS-hyperbolic with respect to $e$ (see Definition~\ref{def:soshyp}).  
From the point of view of this paper, we are
most interested in hyperbolic polynomials that are \emph{not} SOS-hyperbolic,
since these give rise to tractable families of nonnegative polynomials that go
beyond sums of squares.

In Sections~\ref{sec:sos} and~\ref{sec:cubics} we investigate the degrees $d$,
and numbers of variables $n$, for which there is a hyperbolic polynomial that
is not SOS-hyperbolic.  In Section~\ref{sec:sos} we 
characterize when this happens for $d\geq 4$ by showing that the
specialized V\'amos polynomial,
\[ p(x_1,x_2,x_3,x_4) = x_3^2x_4^2 + 4(x_1x_2x_3+x_1x_2x_4+x_1x_3x_4+x_2x_3x_4)(x_1+x_2+x_3+x_4),\]
is hyperbolic but not SOS-hyperbolic (Proposition~\ref{prop:svamos}), and
showing how to take a hyperbolic but not SOS-hyperbolic polynomial and increase
its number of variables, or its degree, and maintain this property
(Propositions~\ref{prop:dv-increase} and~\ref{prop:nup}).  Variations of the
specialized V\'amos polynomial have been studied by
Br\"and\'en~\cite{branden2011obstructions}, Kummer~\cite{kummer2016note},
Kummer, Plaumann, and Vinzant~\cite{kummer2015hyperbolic}, Burton, Vinzant, and
Youm~\cite{burton2014real}, and Amini and Br\"and\'en~\cite{amini2018non}.
Notably,~\cite[Example 5.11]{kummer2015hyperbolic} shows that a closely related
quartic in five variables is hyperbolic but not
SOS-hyperbolic. 

In Section~\ref{sec:cubics} we study the case $d=3$, and obtain a partial
classification of when hyperbolicity and SOS-hyperbolicity coincide.  We show
how to construct, from a simple graph, a one-parameter family of cubic
polynomials that are hyperbolic if, and only if, the parameter is bounded by the
clique number of the graph. This allows us to establish co-NP-hardness
of deciding hyperbolicity of cubic polynomials (Theorem~\ref{thm:np}). By
choosing $\cG = (\cV,\cE)$ to be the icosahedral graph, (with $12$
vertices and $30$ edges), we obtain a hyperbolic cubic 
\begin{equation}
\label{eq:cubic-intro}
	p(x_0,x,y) = x_0^3 - 3x_0\left(\sum_{i\in \cV}x_i^2 + \sum_{\{i,j\}\in \cE}y_{ij}^2\right) + 
	9\sum_{\{i,j\}\in \cE} x_{i}x_j y_{ij},
\end{equation}
in $43 = 1+12+30$ variables that we show is not SOS-hyperbolic. 

The known cases where all hyperbolic polynomials are SOS-hyperbolic occur
because, if a power of a polynomial has a definite determinantal representation,
then it is SOS-hyperbolic (Proposition~\ref{prop:pdet}).  This is a common
generalization of results due to Kummer, Plaumann, and
Vinzant~\cite{kummer2015hyperbolic}, and Netzer, Plaumann, and
Thom~\cite{netzer2013determinantal}.  As such, any hyperbolic polynomial
that is not SOS-hyperbolic is a hyperbolic polynomial for
which no power has a definite determinantal representation. In particular, the
cubic polynomial~\eqref{eq:cubic-intro} associated with the icosahedral graph,
appears to be the first reported hyperbolic cubic with this property.

\paragraph{Parameterizing the dual cone}
The dual cones of hyperbolicity cones are, in general, not well understood. 
In Section~\ref{sec:dual} we show that the polynomial map $(x,y)\mapsto
\phi_{p,e}(x,y)[\cdot]$ almost (i.e., up to closure) parameterizes the dual of the associated hyperbolicity cone
(Theorem~\ref{thm:dualma}). We then  discuss
specific situations in which the map exactly parameterizes the closed dual
cone, giving new `hidden convexity' results, where the
image of a polynomial map is convex.

\subsection{Applications via polynomial optimization}
\label{sec:intro-pop}

We now discuss, in a little more detail, the connection between certifying nonnegativity of polynomials and 
polynomial optimization, and touch on some of the myriad applications of polynomial optimization. 

Polynomial optimization problems involve the minimization of a multivariate polynomial over a set 
defined by polynomial inequality and equality constraints.
By introducing additional variables, one can reduce to the case of minimization over the real points of an algebraic 
variety $\mathcal{V}$ defined by polynomial equations, i.e., 
\begin{equation}
	\label{eq:pop}
	 \textup{minimize}_z\;\; q_0(z)\;\;\textup{subject to}\;\; z\in \mathcal{V}\cap \RR^n.
\end{equation}
For instance, if $q_0(z) = \|\mathcal{A}(z)-y\|^2$, for a linear map $\mathcal{A}$, 
then~\eqref{eq:pop} includes the problem of computing the nearest point to a 
given variety~\cite{draisma2016euclidean}, or more generally, the distance from an affine space to a variety. 
Such problems can often be interpreted as structured linear inverse problems, in which 
we seek to estimate a structured object $z_*\in \mathcal{V}\cap \RR^n$ (such as a rank one tensor) 
from noisy linear measurements, represented by $y$. Problems as diverse as noisy low-rank tensor completion~\cite{barak2016noisy} and 
simultaneous localization and mapping (SLAM) in robotics~\cite{rosen2015convex} can be expressed in this form.
Other cases of~\eqref{eq:pop}, in which $\mathcal{V}\cap \RR \subseteq \{-1,1\}^n$, capture combinatorial optimization problems. 
For instance, in error correction coding, the problem of decoding binary linear codes can be formulated as the optimization of 
a linear functional $q_0$ over the real variety defined by the codewords~\cite{feldman2005using,gouveia2012new}. 

The polynomial optimization problem~\eqref{eq:pop} can be reformulated as the maximization of a lower bound on $q_0$ over
the real points of the variety, i.e., 
\[ \textup{maximize}_{\gamma}\;\; \gamma\;\;\textup{subject to}\;\;  
		q_0(z) - \gamma \geq 0 \;\;\textup{for all $z\in \mathcal{V}\cap \RR^n$}.\]
This leads us to the connection with certificates of nonnegativity. If we can find a globally nonnegative polynomial $q$ such that
$q_0(z) - \gamma$ agrees with $q$ on $\mathcal{V}\cap \RR^n$, it follows that $\gamma$ is a lower bound on the objective 
value of~\eqref{eq:pop}. We can make this computationally tractable by searching over families of 
nonnegative polynomials $q$ with tractable certificates of nonnegativity, such as sums of squares of degree at most $2d$.
This approach gives rise to semidefinite programming relaxations of polynomial optimization problems.  

Hyperbolic certificates of nonnegativity, introduced in this
paper, expand the scope of tractable relaxations possible for polynomial optimization problems. 
If we fix a hyperbolic polynomial $p$, a direction 
of hyperbolicity $e$, and polynomial maps $f$ and $g$, we can obtain a hyperbolic programming relaxation 
of~\eqref{eq:pop} as 
\begin{equation}
\label{eq:hyp-pop}
\textup{maximize}_{\gamma, u}\;\; \gamma\;\;\textup{subject to}\;\; \begin{cases} 
		q_0(z) - \gamma = \phi_{p,e}(f(z),g(z))[u] \;\;\textup{for all $z\in \mathcal{V}$}\\
		u\in \textup{hyperbolicity cone associated with $p$ and $e$}.\end{cases}
\end{equation}
Depending how how $\mathcal{V}$ is represented, there are different approaches to making 
the equality constraints, which are linear in $\gamma$ and $u$, explicit. If it is straightforward
to sample from the irreducible components of $\mathcal{V}$, the sampling-based approach of 
Cifuentes and Parrilo~\cite{cifuentes2017sampling} could be used.
Otherwise one can use methods based on Gr\"obner bases~\cite{permenter2012selecting}.  
We show, in Proposition~\ref{prop:cert-sos}, that there are ways to choose $p$, $e$, $f$, and $g$ 
so that~\eqref{eq:hyp-pop} recovers a sum-of-squares relaxation.
However, it may be possible to make other choices that are more tailored to the structure of the original polynomial
optimization problem of interest. Such targeted choices may have advantages, such as giving
relaxations that can be solved more efficiently than a semidefinite programming-based relaxation, 
and yet give comparable bounds on the optimal value.

\section{Preliminaries}
\label{sec:prelim}

\subsection{Basic notation}
Let $\RR[x_1,x_2,\ldots,x_n]_d$ denote polynomials with real coefficients,
homogeneous of degree $d$ in the indeterminates $x_1,x_2,\ldots,x_n$.  Let
$\cS^m$ denote real symmetric $m\times m$ matrices and let
$\cS[x_1,x_2,\ldots,x_n]^m_{d}$ denote symmetric matrices with entries in
$\RR[x_1,x_2,\ldots,x_n]_{d}$.  If $p\in \RR[x_1,\ldots,x_n]_d$ and $u\in
\RR^n$, then $D_up(x) = \left.\frac{d}{dt}p(x+tu)\right|_{t=0}$ is the
directional derivative of $p$ in the direction $u$. For brevity we write
$D^2_{uv}p(x):= D_u D_v p(x)$.

If $A\in \cS^m$ is a symmetric matrix, we write $A \psd 0$ to mean that $A$ is
positive semidefinite.  We use $\RR^n$ to denote the $n$-dimensional real vector
space and $(\RR^n)^*$ to denote the dual space of linear functionals on
$\RR^n$. If $\xi\in (\RR^n)^*$ and $u\in \RR^n$, we use the notation $\xi[u]$
for the image of $u$ under $\xi$.  Throughout, we let $e_1,e_2,\ldots,e_n\in
\RR^n$ denote the standard basis vectors, so that $e_i$ is zero except for the
$i$th entry, which is one. Occasionally it is convenient to use
$e_0,e_1,\ldots,e_{n}$ as the standard basis for $\RR^{n+1}$.  If $x\in \RR^n$,
we use $\|x\| := \sqrt{x_1^2 + \cdots + x_n^2}$ to denote the Euclidean norm. 

\subsection{Hyperbolic polynomials, hyperbolic eigenvalues, and hyperbolicity cones}
\label{sec:hyp-prelim}

A homogeneous polynomial $p\in \RR[x_1,\ldots,x_n]_d$ is \emph{hyperbolic with respect to
$e\in \RR^n$} if $p(e) > 0$
and, for all $x\in \RR^n$, the univariate polynomial $p_x(t):=p(x+te)$ has only
real zeros.  Throughout we let $\Hyp_{n,d}(e)$ denote the set of polynomials
homogeneous in $n$ variables of degree $d$ that are hyperbolic with respect to
$e\in \RR^n$. 
If $p\in \Hyp_{n,d}(e)$ and $x\in \RR^n$, we denote by $\lambda_1(x)\geq
\lambda_2(x)\geq \cdots \geq \lambda_d(x)$ the zeros of $t\mapsto p(te-x)$, and
often refer to these as the \emph{hyperbolic eigenvalues} of $x$.  These depend
on the choice of $e$, but we will usually suppress this in our notation.
Associated with a hyperbolic polynomial is the closed hyperbolicity cone \[
\Lambda_+(p,e) = \{x\in \RR^n\;:\; \lambda_{i}(x) \geq 0\;\;\textup{for all
$i=1,2,\ldots,d$}\}.\] This is a convex cone, a result due to
G\r{a}rding~\cite{gaarding1959inequality}.  A hyperbolic polynomial $p\in
\Hyp_{n,d}(e)$ is \emph{complete} if $\{x:\lambda_1(x) = \cdots = \lambda_d(x)
= 0\} = \{0\}$.  The hyperbolicity cones of complete hyperbolic polynomials are
\emph{pointed}, in the sense that $\Lambda_+(p,e) \cap (-\Lambda_+(p,e)) =
\{0\}$. 

The following result describes how the eigenvalue functions change, and appears
in a number of slightly different formulations in the
literature~\cite{guler1997hyperbolic,harvey2013gaarding,atiyah1970lacunas}.
Note that the functions $t_i(s;x,u)$ appearing below are just a particular
choice of ordering of the eigenvalues of $x+su$.
\begin{theorem}[{\cite[Lemma 3.27]{atiyah1970lacunas}}]
	\label{thm:abg}
	If $p\in \Hyp_{n,d}(e)$ and $x,u\in \RR^n$, then 
	\[ p(x+te+su) = p(e)\prod_{i=1}^{d}(t+t_i(s;x,u))\]
	where the functions $s\mapsto t_i(s;x,u)$ are real analytic functions of $s$ with the property that
	if $u\in \Lambda_+(p,e)$, then $t_i'(s;x,u) \geq 0$ for all $s$. 
\end{theorem}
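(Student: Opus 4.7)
The plan is to address the three assertions in sequence: existence of the factorization, real-analytic dependence of the $t_i$ on $s$, and non-negativity of $t_i'(s)$ when $u\in\Lambda_+(p,e)$.

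For the factorization, set $F(s,t):=p(x+te+su)$ and, for fixed $s$, put $y:=x+su$ so that $F(s,t)=p(y+te)$. Homogeneity of $p$ forces the coefficient of $t^d$ to equal $p(e)>0$, and hyperbolicity of $p$ with respect to $e$ forces every zero of $t\mapsto p(y+te)$ to be real. Writing the negatives of those zeros as $t_1(s),\ldots,t_d(s)$ yields the asserted identity.

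For analyticity, the coefficients of $F(s,\cdot)$ are polynomials in $s$ and all of its zeros stay real, so the classical Rellich--Kato perturbation theorem (in its version for polynomial families with only real roots) provides local real-analytic labelings of the roots. The reality hypothesis is essential here, since it forbids the complex-conjugate Puiseux collisions that would otherwise preclude an analytic labeling.

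For the monotonicity I would proceed by implicit differentiation at simple roots and extend by continuity. At a point where $-t_k(s)$ is a simple zero of $F(s,\cdot)$, implicit differentiation gives
\[ t_k'(s) \;=\; \frac{D_u p(z_k)}{D_e p(z_k)}, \qquad z_k\,:=\,x+su-t_k(s)\,e, \qquad p(z_k)=0, \]
so the task reduces to showing $D_e p(z_k)$ and $D_u p(z_k)$ have the same sign. The natural argument considers the segment $v_\theta:=(1-\theta)e+\theta u\subset\Lambda_+(p,e)$ and the affine function $\theta\mapsto D_{v_\theta}p(z_k)=(1-\theta)D_e p(z_k)+\theta D_u p(z_k)$: a sign change would force a zero at some $\theta_*\in(0,1)$, i.e.\ a double root at $\lambda=0$ of the (real-rooted, by hyperbolicity of $p$ with respect to the interior point $v_{\theta_*}$) polynomial $\lambda\mapsto p(z_k+\lambda v_{\theta_*})$; such a degenerate configuration is excluded for generic $x$ by a codimension count and extended to arbitrary $x$ and to $u\in\partial\Lambda_+(p,e)$ by density.

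The main technical obstacle is this last step: making the sign-agreement argument fully rigorous, specifically closing the density reduction when the segment $v_\theta$ meets a singular stratum of $\{p=0\}$. A cleaner alternative I would fall back on is to observe that $\{t_i(s)\}_{i=1}^d=\{\mu_j(x+su)\}_{j=1}^d$ as multisets (by homogeneity applied to $p(y+te)$), establish a Weyl-type monotonicity $s\mapsto\mu_k(x+su)$ of the ordered hyperbolic eigenvalues directly from $\Lambda_+(p,e)+\Lambda_+(p,e)\subseteq\Lambda_+(p,e)$, and then transfer the monotonicity to each real-analytic branch by noting that on any interval of simple roots $t_i$ coincides with some ordered $\mu_k$ while at a crossing its analytic slope matches the one-sided slopes of two adjacent ordered branches, both of which are non-negative.
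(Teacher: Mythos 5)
The paper does not prove this theorem; it simply cites \cite[Lemma 3.27]{atiyah1970lacunas} and uses the monotonicity as a black box, so there is no in-paper proof to compare against. Judged on its own terms, your plan has the right skeleton: the factorization, the Rellich-type analyticity of roots for real-rooted analytic families, and the identity $t_k'(s)=D_up(z_k)/D_ep(z_k)$ at a simple root are all correct. The gap is in the step that shows $D_up(z_k)$ and $D_ep(z_k)$ cannot have opposite signs, and it occurs in both of your suggested routes.

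In route~1, the appeal to a ``codimension count for generic $x$'' followed by ``density'' does not close. As $x$ or $s$ varies, the labeled analytic branch $t_k(s;x,u)$ can reindex, so transferring a sign inequality from a dense parameter set to all parameters is delicate; worse, you would need to know in advance that the degenerate set of $s$ is nowhere dense, which is essentially what you are trying to prove. In fact no genericity is needed, because the configuration $p(z_k)=0$, $D_ep(z_k)\neq 0$, $D_{v_{\theta_*}}p(z_k)=0$ with $v_{\theta_*}$ in the \emph{open} hyperbolicity cone is impossible, not merely non-generic. Set $Q(a,b):=p(z_k+ae+bv_{\theta_*})$. Since $\partial_aQ(0,0)=D_ep(z_k)\neq 0$, the implicit function theorem writes $\{Q=0\}$ near the origin as $a=h(b)$ with $h(0)=0$ and $h'(0)=-D_{v_{\theta_*}}p(z_k)/D_ep(z_k)=0$; also $h\not\equiv 0$ since $p(v_{\theta_*})\neq 0$. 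So $h(b)=cb^m(1+O(b))$ with $c\neq 0$, $m\geq 2$, and for small real $a$ with $a/c<0$ the $m$ roots of $Q(a,\cdot)$ near $b=0$ (by Rouch\'e, exactly the solutions of $h(b)=a$) include non-real ones, contradicting hyperbolicity of $p$ with respect to $v_{\theta_*}$. With this in hand the only density you need is in $s$ for fixed $x$, supplied by analyticity of the $t_i$. Route~2 has a different defect: the Weyl-type monotonicity of the ordered branches $\lambda_k(x+su)$ does not follow ``directly from'' $\Lambda_+(p,e)+\Lambda_+(p,e)\subseteq\Lambda_+(p,e)$. Cone additivity gives superadditivity of the smallest branch $\lambda_d$ only; the monotonicity of the intermediate $\lambda_k$ is essentially equivalent to the statement being proved, so as written this alternative is circular. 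For context, the cited proof in \cite{atiyah1970lacunas} instead uses the stability property that $p(x+te+su)$ has no zeros with $\Im t>0$ and $\Im s\geq 0$ when $u$ lies in the cone, deducing the monotonicity of the real analytic branches from that.
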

The fact that $t_i'(s;x,u)\geq 0$  whenever $u$ is in
the hyperbolicity cone is the key property from which essentially all
nonnegativity statements in this paper can be derived. 

\subsection{Hyperbolic programming}

If $p$ is hyperbolic with respect to $e$, and $\xi$ is a linear functional, a
convex optimization problem of the form \[ \textup{minimize}_x\; \xi[x]
\quad\textup{subject to}\quad \begin{cases} Ax = b&\\x\in
\Lambda_+(p,e)&\end{cases}\] is known as a \emph{hyperbolic optimization
problem}. G\"uler~\cite{guler1997hyperbolic} showed that 
$-\log_ep(x)$ is a self-concordant barrier function for the cone
$\Lambda_+(p,e)$. As such, hyperbolic optimization problems can be solved using
interior point methods as long as the polynomial $p$ can be evaluated
efficiently.  More recently, other algorithmic approaches to solving hyperbolic
optimization problems have been developed, including primal-dual interior point
methods~\cite{myklebust2014interior}, affine scaling
methods~\cite{renegar2014polynomial}, first-order methods based on applying a
subgradient method to a transformation of the
problem~\cite{renegar2016efficient} and accelerated modifications tailored for
hyperbolic programs~\cite{renegar2017accelerated}. 

\subsection{Definite determinantal representations}

If $A_1,\ldots,A_n$ are $d\times d$ symmetric matrices 
and $e\in \RR^n$ satisfies $\sum_{i=1}^{n}A_ie_i \pd 0$, then the polynomial
\begin{equation}
	\label{eq:defdet} \textstyle{p(x) = \det\left(\sum_{i=1}^{n}A_ix_i\right)}
\end{equation}
is homogeneous of degree $d$ and is hyperbolic with respect to $e$.  We say
that a polynomial $p\in \Hyp_{n,d}(e)$ has a \emph{definite determinantal
representation} if it can be expressed in the form~\eqref{eq:defdet} for some
$d\times d$ symmetric matrices $A_1,A_2,\ldots,A_n$ such that
$\sum_{i=1}^{n}A_ie_i \pd 0$. 
In this case 
its hyperbolicity cone is a \emph{spectrahedron}, and has the form
$\Lambda_+(p,e) = \left\{x\in \RR^n\;:\; \sum_{i=1}^{n}A_i x_i \psd 0\right\}$.

\subsection{B\'ezoutians and Hankel matrices}

The nonnegative polynomials we construct from hyperbolic polynomials will come
from the positive semidefiniteness of B\'ezoutians of certain pairs of
polynomials, or of Hankel matrices associated with certain rational functions.
In this section we summarize some basic facts about these objects and the
relationships between them. These can be found, for instance,
in~\cite{bini2011numerical} or~\cite[Section 2.1]{krein1981method}. We give proofs of some of these results 
in Appendix~\ref{app:prelim} to make the paper more self-contained.

If $a(t)$ and $b(t)$ are univariate polynomials with $\textup{deg}(b) < \textup{deg}(a)$, define
the $m\times m$ Hankel matrix
\[ H_m\!\left(\frac{b}{a}\right) = [h_{i+j-1}]_{i,j=1}^{m}\quad\textup{where}\quad
\frac{b(t)}{a(t)} = \sum_{k=1}^{\infty}h_kt^{-k}.\]

If $a(t)$ and $b(t)$ are univariate polynomials with $\textup{deg}(b) < \textup{deg}(a) \leq  m$, 
the B\'ezoutian $B_m(a,b)$ is the $m\times m$ matrix defined via the identity
\begin{equation}
	\label{eq:beqid} \frac{a(t)b(s) - b(t)a(s)}{t-s} = \sum_{i,j=1}^{m} [B_m(a,b)]_{ij}t^{i-1}s^{j-1}.
\end{equation}
If $m> d= \textup{deg}(a)$, then the $m\times m$ B\'ezoutian is zero except in the upper left $d\times d$ block.

Under appropriate assumptions on $a$ and $b$, the B\'ezoutian  and the
Hankel matrix are related by a unimodular congruence transformation.
\begin{proposition}
\label{prop:cong-uni}
Suppose $a$ is a monic polynomial of degree $d$ and $b$ is a polynomial of degree at most $d-1$. If $m\geq d$, 
there exists an $m\times m$ unimodular matrix $M_m$ with entries that are linear in the coefficients of $a$, such that 
$B_{m}(a,b) = M_m(a)H_m\!\left(\frac{b}{a}\right)M_m(a)^T$.
\end{proposition}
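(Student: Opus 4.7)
The plan is to handle the square case $m=d$ first by a direct generating function computation, then extend to $m > d$ via the linear recurrence satisfied by the Hankel coefficients. I will exhibit $M_m(a)$ explicitly in block form.

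For $m = d$, the key step is to rewrite the Bézoutian's defining identity by dividing out $a(t)a(s)$:
\[ \frac{a(t) b(s) - b(t) a(s)}{t-s} \;=\; a(t) a(s) \cdot \frac{b(s)/a(s) - b(t)/a(t)}{t-s}. \]
Substituting the Laurent expansion $b/a = \sum_{k \geq 1} h_k t^{-k}$ at infinity (valid because $a$ is monic and $\deg b < d$) together with the elementary identity $(s^{-k} - t^{-k})/(t-s) = \sum_{p+q = k+1,\, p,q \geq 1} t^{-p} s^{-q}$, this simplifies to $a(t) a(s) \sum_{p,q \geq 1} h_{p+q-1}\, t^{-p} s^{-q}$. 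Matching coefficients of $t^{i-1} s^{j-1}$ on both sides yields
\[ [B_d(a,b)]_{ij} \;=\; \sum_{p,q \geq 1} a_{i+p-1}\, a_{j+q-1}\, h_{p+q-1}, \]
a finite sum since $a_k = 0$ for $k > d$. This is exactly $[M_d(a) H_d(b/a) M_d(a)^T]_{ij}$, where $[M_d(a)]_{ij} := a_{i+j-1}$ when $i+j \leq d+1$ (using $a_d = 1$) and $0$ otherwise. This $M_d(a)$ is anti-triangular with ones on the anti-diagonal, so $\det M_d(a) = \pm 1$, and its entries are plainly linear in the coefficients of $a$.

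For $m > d$, comparing coefficients of $t^{-r}$ ($r \geq 1$) on the two sides of $b(t) = a(t) \sum_{k \geq 1} h_k t^{-k}$ gives the linear recurrence $h_{k+d} + a_{d-1} h_{k+d-1} + \cdots + a_0 h_k = 0$ for all $k \geq 1$. By symmetry of the Hankel matrix, both rows and columns of $H_m(b/a)$ satisfy this recurrence, so the column operation that replaces column $d+r$ by the combination $a_0 \cdot(\text{col }r) + a_1 \cdot(\text{col }r{+}1) + \cdots + a_{d-1} \cdot (\text{col }d{+}r{-}1) + (\text{col }d{+}r)$ annihilates every column beyond the $d$-th. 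Encoding these operations as a unit upper-triangular matrix $U(a)$ and invoking symmetry gives $U(a)^T H_m(b/a) U(a) = \blkdiag(H_d(b/a),\, 0_{m-d})$.

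Finally, set $M_m(a) := \blkdiag(M_d(a),\, I_{m-d}) \cdot U(a)^T$. The factor $U(a)^T$ has ones on the diagonal, with $a$-entries only in a lower-left $(m-d) \times d$ block; the factor $\blkdiag(M_d(a), I_{m-d})$ has $a$-entries only in its upper-left $d \times d$ block. Writing out the product, the supports of these two factors' nontrivial entries are disjoint enough that the product collapses to the explicit block lower-triangular form
\[ M_m(a) = \begin{pmatrix} M_d(a) & 0 \\ N(a) & I_{m-d} \end{pmatrix}, \]
where $N(a)$ is a Toeplitz-shaped $(m-d) \times d$ block in the coefficients of $a$. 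This is unimodular (lower-triangular product of unimodular blocks) with each entry linear in the coefficients of $a$, and
\[ M_m(a) H_m(b/a) M_m(a)^T = \blkdiag\!\left(M_d H_d M_d^T,\, 0\right) = \blkdiag(B_d(a,b),\, 0) = B_m(a,b), \]
using the observation (stated just after the definition of the Bézoutian) that $B_m(a,b)$ vanishes outside its upper-left $d \times d$ block. The main obstacle is the index bookkeeping in Step~1 to match the series coefficients to the prescribed shape of $M_d(a)$; once that is done, checking that the combined $M_m(a)$ remains linear (rather than quadratic) in the coefficients of $a$ is the only nontrivial remaining point, and it follows from the disjoint-support observation above.
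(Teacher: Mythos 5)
Your two-step plan is sound, and the way you handle $m>d$ is a genuinely different route from the paper's. The paper replaces $(a,b)$ by $(\tilde a,\tilde b) = (t^{m-d}a,\,t^{m-d}b)$, a pair of degree exactly $m$ and $m-1$ with the same ratio $b/a$, invokes the square case applied to $\tilde a,\tilde b$ to get $B_m(\tilde a,\tilde b)=B(\tilde a,1)H_m(b/a)B(\tilde a,1)$, and then conjugates by the block permutation matrix that moves the nonzero $d\times d$ block of $B_m(\tilde a,\tilde b)$ from the lower-right corner to the upper-left corner of $B_m(a,b)$. Your approach instead uses the linear recurrence $h_{k+d}+a_{d-1}h_{k+d-1}+\cdots+a_0h_k=0$ to build a unit upper-triangular $U(a)$ with $U(a)^TH_m(b/a)U(a)=\blkdiag\bigl(H_d(b/a),\,0\bigr)$, then applies the square case to the top block. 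Both give the same $m=d$ engine (your $M_d(a)$ is exactly the paper's $B(a,1)$, and your index computation is the same one appearing in the paper's Proposition~A.1); the reduction from $m>d$ to $m=d$ is where you differ, and your recurrence-based reduction is a perfectly legitimate alternative.

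There is, however, one concrete error in the final bookkeeping. Your claim that $U(a)^T$ has $a$-entries confined to a lower-left $(m-d)\times d$ block is false once $m>d+1$: row $d+s$ of $U(a)^T$ has $a_0,\ldots,a_{d-1}$ in columns $s,s+1,\ldots,d+s-1$, and for $s\geq 2$ these spill into columns $>d$. Consequently the product $M_m(a)=\blkdiag\!\bigl(M_d(a),I_{m-d}\bigr)U(a)^T$ is \emph{not} of the form $\bigl(\begin{smallmatrix}M_d(a)&0\\N(a)&I_{m-d}\end{smallmatrix}\bigr)$; its lower-right $(m-d)\times(m-d)$ block is a unit lower-triangular Toeplitz matrix with subdiagonal entries $a_{d-1},a_{d-2},\ldots$ rather than the identity (e.g.\ for $d=2$, $m=4$ it is $\bigl(\begin{smallmatrix}1&0\\a_1&1\end{smallmatrix}\bigr)$). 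The conclusion you actually need still holds, but for a different reason than the disjoint-support heuristic you give: the bottom $m-d$ rows of $M_m(a)$ are literally the bottom $m-d$ rows of $U(a)^T$ (since the bottom block of $\blkdiag(M_d(a),I_{m-d})$ is the identity), and the top $d$ rows reduce to $\begin{bmatrix}M_d(a)&0\end{bmatrix}$ because $U(a)^T$ is lower triangular with identity upper-left $d\times d$ block. Each of those blocks is linear in the coefficients of $a$, and the product is block lower triangular with unimodular diagonal blocks, so $M_m(a)$ is unimodular. With that correction the proof closes; as written, the stated block form and the support argument would not survive scrutiny.
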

\begin{proof}
See Appendix~\ref{app:prelim}.
\end{proof}
Certain linear transformations on polynomials give rise to particularly nice congruence transformations on B\'ezoutians. 
\begin{lemma}
\label{lem:shift}
Let $a$ and $b$ be univariate polynomials of degree at most $d\leq m$. 
Let $(t_0\cdot a)(t) = a(t+t_0)$ and $(t_0\cdot b)(t) = b(t+t_0)$ be shifted versions of 
those polynomials. Then 
\[ B_m(t_0\cdot a,t_0\cdot b) = K(t_0)B_m(a,b)K(t_0)^T\quad\textup{where}\quad [K(t_0)]_{jk} = \binom{k-1}{j-1}t_0^{k-j}\]
with the convention that $\binom{k}{j} = 0$ if $k<j$.
\end{lemma}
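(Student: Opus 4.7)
The plan is to establish the identity by manipulating the defining generating function for the B\'ezoutian under the shift substitution, and then applying the binomial theorem. The key observation that makes everything work is that the substitution $t \mapsto t+t_0$, $s \mapsto s+t_0$ leaves the denominator in \eqref{eq:beqid} invariant, since $(t+t_0) - (s+t_0) = t-s$.

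First, I would apply this substitution to the defining identity
\[ \frac{a(t)b(s) - b(t)a(s)}{t-s} = \sum_{i,j=1}^{m} [B_m(a,b)]_{ij}t^{i-1}s^{j-1}. \]
The left-hand side becomes
\[ \frac{a(t+t_0)b(s+t_0) - b(t+t_0)a(s+t_0)}{t-s}, \]
which is exactly the generating function appearing in the defining identity of $B_m(t_0\cdot a, t_0\cdot b)$. Hence the shifted generating function equals $\sum_{i,j=1}^{m} [B_m(a,b)]_{ij}(t+t_0)^{i-1}(s+t_0)^{j-1}$.

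Second, I would expand each factor using the binomial theorem in the form
\[ (t+t_0)^{i-1} = \sum_{k=1}^{m}\binom{i-1}{k-1}t_0^{i-k}\,t^{k-1}, \]
with the convention that $\binom{i-1}{k-1}=0$ when $k>i$, so that the sum can legitimately run up to $m$. The coefficient of $t^{k-1}$ in this expansion is exactly $[K(t_0)]_{ki}$, and similarly for $(s+t_0)^{j-1}$. Substituting these expansions and collecting the coefficient of $t^{k-1}s^{l-1}$ gives
\[ \sum_{i,j=1}^{m}[K(t_0)]_{ki}\,[B_m(a,b)]_{ij}\,[K(t_0)]_{lj} = [K(t_0)B_m(a,b)K(t_0)^T]_{kl}. \]
Comparing with $[B_m(t_0\cdot a, t_0\cdot b)]_{kl}$ on the left yields the claimed matrix identity.

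There is no real obstacle here; the argument is a direct change of variables in the defining identity, and the only nontrivial step is matching the binomial expansion coefficients to the given formula for $K(t_0)$, which is just careful index bookkeeping. I should note in passing that although the preliminaries state the B\'ezoutian for pairs with $\deg(b)<\deg(a)$, the defining identity \eqref{eq:beqid} makes sense for any pair with degree at most $d \le m$ (the numerator always vanishes on $t=s$), and shifting does not change degrees, so all objects in the statement are well-defined.
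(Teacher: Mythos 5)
Your proof is correct, and it takes a genuinely different route from the paper: the paper simply cites Theorem~2.7 and page~34 of Bini and Pan for this lemma, whereas you give a short, self-contained derivation directly from the generating-function identity~\eqref{eq:beqid}. The key step --- the observation that the substitution $t\mapsto t+t_0$, $s\mapsto s+t_0$ leaves the denominator $t-s$ unchanged, so the Bézoutian transforms by the pair of binomial expansions $(t+t_0)^{i-1}=\sum_k\binom{i-1}{k-1}t_0^{i-k}t^{k-1}$ --- is exactly the content of the cited reference, but written out here it makes the paper's appendix more self-contained. Your index bookkeeping does match the paper's convention $[K(t_0)]_{jk}=\binom{k-1}{j-1}t_0^{k-j}$, i.e.\ $(t+t_0)^{i-1}=\sum_{k=1}^{m}[K(t_0)]_{ki}t^{k-1}$, and the contraction then gives precisely $[K(t_0)B_m(a,b)K(t_0)^T]_{kl}$. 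Your remark about the degree hypothesis is also well taken: the paper's preliminary definition of $B_m(a,b)$ requires $\deg b<\deg a$, but the generating-function identity, and hence the statement and your proof, remain valid whenever both $a$ and $b$ have degree at most $d\leq m$, which is what the lemma asserts and what is actually used (the shifted polynomials have the same degrees as $a$ and $b$).
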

\begin{proof}
Combine Theorem 2.7 of~\cite{bini2011numerical} with the discussion on page 34 of~\cite{bini2011numerical}.
\end{proof}

\section{Hyperbolicity cones as sections of nonnegative polynomials}
\label{sec:cert}

In this section we show how to construct, from a hyperbolic polynomial $p\in \Hyp_{n,d}(e)$, 
a subspace of polynomials for which the cone of nonnegative polynomials in the subspace
is linearly isomorphic to the hyperbolicity cone $\Lambda_+(p,e)$. Consequently, we can 
optimize over nonnegative polynomials from this subspace by solving hyperbolic optimization problems.

Our approach is closely related to the following result of Kummer, Plaumann, and Vinzant.
\begin{theorem}[{Kummer, Plaumann, Vinzant~\cite{kummer2015hyperbolic}}]
\label{thm:kpv}
If $p\in \Hyp_{n,d}(e)$ is square-free, then $u\in \Lambda_+(p,e)$ if, and only if, 
$D_up(x)D_ep(x) - p(x)D_{ue}p(x) \geq 0$ for all $x$.
\end{theorem}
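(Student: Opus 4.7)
The plan is to derive an explicit identity rewriting $g_u(x) := D_up(x)D_ep(x)-p(x)D_{ue}p(x)$ as a weighted sum of inverse-square eigenvalue terms, and then extract each implication from this identity. Applying Theorem~\ref{thm:abg} and taking the logarithmic $s$-derivative of $p(x+te+su)=p(e)\prod_{i=1}^{d}(t+t_i(s;x,u))$ at $s=0$ gives the partial fraction expansion
$$\frac{D_up(x+te)}{p(x+te)}=\sum_{i=1}^{d}\frac{t_i'(0;x,u)}{t+\lambda_i(x)}.$$
Differentiating once more in $t$, evaluating at $t=0$, and clearing denominators then yields
$$g_u(x)=p(x)^{2}\sum_{i=1}^{d}\frac{t_i'(0;x,u)}{\lambda_i(x)^{2}},$$
an identity valid on the open dense set where $p(x)\neq 0$ and the $\lambda_i(x)$ are nonzero and pairwise distinct. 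Square-freeness of $p$ is what makes this set dense.

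For the ``only if'' direction, if $u\in\Lambda_+(p,e)$ then Theorem~\ref{thm:abg} gives $t_i'(0;x,u)\geq 0$ for every $i$ and $x$, so each term on the right is nonnegative; consequently $g_u(x)\geq 0$ on the dense subset, and hence everywhere by continuity (since $g_u$ is a polynomial).

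For the ``if'' direction I would evaluate $g_u$ at smooth points $x$ of the hypersurface $\{p=0\}$ lying in $\partial\Lambda_+(p,e)$. At such an $x$ the hypothesis $g_u(x)\geq 0$ degenerates, via $p(x)=0$, to $D_up(x)\cdot D_ep(x)\geq 0$. Euler's identity combined with the fact that $e$ lies in the interior of $\Lambda_+(p,e)$ shows that $D_ep(x)>0$ at every smooth boundary point (one has $\langle\nabla p(x),x\rangle=0$ by homogeneity and $\langle\nabla p(x),e-x\rangle>0$ by the supporting hyperplane inequality). Hence $D_up(x)\geq 0$ at every smooth boundary point, and since the inward normals $\nabla p(x)$ at such points generate, up to closure, the dual cone $\Lambda_+(p,e)^{*}$, biduality of closed convex cones delivers $u\in\Lambda_+(p,e)$.

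The main obstacle is the final step of the backward argument: establishing that the gradients $\nabla p(x)$ at smooth boundary points of $\Lambda_+(p,e)$ are rich enough to generate $\Lambda_+(p,e)^{*}$. This is precisely where square-freeness of $p$ is essential: it ensures that the smooth locus of $\{p=0\}$ is dense in the hypersurface, so that every supporting hyperplane of $\Lambda_+(p,e)$ is a limit of tangent hyperplanes at smooth boundary points, and no component of the dual cone is missed.
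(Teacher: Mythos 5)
The paper does not actually prove Theorem~\ref{thm:kpv}; it is cited from Kummer--Plaumann--Vinzant, and the only related content in the paper is the remark after Theorem~\ref{thm:main1} observing that the forward implication is the $(1,1)$-entry of $B_{p,e}(x)[u]\psd 0$. So there is no in-paper proof to compare against line by line; what follows assesses your argument on its own terms and against the paper's framework.

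Your forward (``only if'') direction is correct. Taking $\partial_s$ then $\partial_t$ of $\log p(x+te+su)$ at $s=t=0$ really does yield
\[
D_up(x)D_ep(x) - p(x)D_{ue}p(x) \;=\; p(x)^{2}\sum_{i}\frac{t_i'(0;x,u)}{\lambda_i(x)^{2}},
\]
and this is precisely the $(1,1)$ entry of $B_{p,e}(x)[u]$, so your identity is a hands-on version of Lemma~\ref{lem:hermite-clf} combined with Proposition~\ref{prop:congx}. One small inaccuracy: you don't need the eigenvalues to be distinct here (group the $t_i'$ by eigenvalue, as in the canonical linear functionals of Section~\ref{sec:clf}), and indeed this direction does not use square-freeness at all -- consistent with the paper's Remark, which derives it from Theorem~\ref{thm:main1} with no square-freeness hypothesis. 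Your comment that ``square-freeness is what makes this set dense'' for the forward step overstates its role.

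Your backward (``if'') direction is the right idea and is where square-freeness genuinely enters, but two claims are stated rather than proved. First, you need that at a multiplicity-one boundary point $x\in\partial\Lambda_+(p,e)$ one has $\nabla p(x)\neq 0$ and that $\nabla p(x)$ is the inward normal; the cleanest route is not the ``supporting hyperplane inequality'' you invoke (which presupposes the sign of $\nabla p(x)$, making the argument mildly circular) but the direct computation $D_ep(x)=p(e)\sum_i\prod_{j\neq i}\lambda_j(x)=p(e)\prod_{j\neq i_0}\lambda_j(x)>0$ when $\lambda_{i_0}(x)=0$ is the unique zero eigenvalue. Second, the assertion that the inward normals at smooth boundary points generate $\Lambda_+(p,e)^{*}$ up to closure is exactly where the real work lies: it requires (a) that multiplicity-one points (equivalently, smooth points of $\{p=0\}$ on the boundary) are dense in $\partial\Lambda_+(p,e)$, which is the content that square-freeness buys you, and (b) a Straszewicz/Krein--Milman type argument identifying exposed extreme rays of the pointed dual cone with normals at smooth boundary points. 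You flag this final step as the main obstacle, which is appropriate, but as written it is asserted rather than established. So the proof is correct in structure and in all the computations, but the duality endgame needs these two convex-geometry facts spelled out before it is airtight.
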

This shows that the hyperbolicity cone $\Lambda_+(p,e)$ is linearly isomorphic to the intersection 
of the cone of nonnegative polynomials in $n$ variables of degree $2d-2$ with the subspace spanned by the polynomials
$D_{e_i}p(x)D_ep(x) - p(x)D_{e_ie}p(x)$ for $i=1,2,\ldots,n$.

Our variation on Theorem~\ref{thm:kpv} is  expressed in terms of the 
B\'ezoutian, or alternatively the corresponding Hankel matrix, associated with 
a polynomial and its directional derivative.
\begin{definition}
\label{def:BH}
If $p\in \RR[x_1,\ldots,x_n]$ and $u\in \RR^n$, 
let $p_x(t) = p(x+te)$ and let $D_up_x(t) = D_up(x+te)$. 
The \emph{parameterized B\'ezoutian} is the $d\times d$ symmetric matrix with polynomial entries given by 
\[ B_{p,e}(x)[u] = B_d(p_x,D_up_x).\]
The \emph{parameterized Hermite matrix} is the $d\times d$ symmetric Hankel matrix with polynomial entries given by
\[ H_{p,e}(x)[u] = H_d\!\left(\frac{D_up_x}{p_x}\right).\]
\end{definition}
Note that $B_{p,e}(x)[u]$ and $H_{p,e}(x)[u]$ are both linear in $u$. 
Moreover,  
\[ \frac{D_ep_x(t)}{p_x(t)} = \sum_{k=1}^{d}\frac{1}{t+\lambda_k(x)}\quad\textup{and so}\quad
[H_{p,e}(x)[e]]_{ij} = \sum_{k=1}^{d}(-\lambda_k(x))^{i+j-2}.\]
This is (up to a choice of sign) exactly the parameterized Hermite matrix from~\cite{netzer2013determinantal}.
\begin{example}[Parameterized Hermite matrix for the determinant]
\label{eg:hermdet}
If $p(X) = \det(X)$ is the determinant restricted to symmetric matrices, $e = I$, and $U$ is a symmetric matrix, then 
\[ H_{p,e}(X)[U]_{ij} = [\tr(UX^{i+j-2})]_{ij}.\]
This follows from the fact that
\[ \frac{D_U\det(X+tI)}{\det(X+tI)} = D_U \log\det(X+tI)=   \tr(U(X+tI)^{-1}) = \sum_{k=1}^{\infty} \tr(U(-X)^{k-1})t^{-k}.\] 
\end{example}
The following relationship between the 
parameterized B\'ezoutian and Hermite matrices  
allows positivity statements about parameterized B\'ezoutians to be transferred to 
corresponding positivity statements about parameterized Hermite matrices, and vice versa. 
\begin{proposition}
\label{prop:congx}
If $p\in \RR[x_1,\ldots,x_n]_d$ and $u\in \RR^n$, then there are matrices $M_{p,e}(x)$
and $M_{p,e}(x)^{-1}$, both with polynomial entries, such that   
$B_{p,e}(x)[u] = M_{p,e}(x)H_{p,e}(x)[u]M_{p,e}(x)^T$.
\end{proposition}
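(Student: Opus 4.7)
The plan is to reduce this pointwise in $x$ to Proposition~\ref{prop:cong-uni}, handling the fact that $p_x(t)$ is not monic in $t$ by a straightforward rescaling. First, I would observe that $p_x(t) = p(x+te)$ has degree exactly $d$ in $t$ with leading coefficient $p(e)$ (since $p$ is homogeneous of degree $d$, and $p(e)>0$ as $p$ is hyperbolic with respect to $e$), while $D_up_x(t) = D_up(x+te)$ has degree at most $d-1$ in $t$. So for each fixed $x$ the pair $(p_x,D_up_x)$ satisfies the degree hypothesis of Proposition~\ref{prop:cong-uni}, but $p_x$ is not monic.

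To address this, I would define the monic polynomial $\tilde{p}_x(t) = p_x(t)/p(e)$ and apply Proposition~\ref{prop:cong-uni} to $\tilde{p}_x$ and $D_up_x$, obtaining a $d\times d$ unimodular matrix $M_d(\tilde{p}_x)$ whose entries are linear in the coefficients of $\tilde{p}_x$ and such that $B_d(\tilde{p}_x,D_up_x) = M_d(\tilde{p}_x)\,H_d\!\left(\frac{D_up_x}{\tilde{p}_x}\right)\,M_d(\tilde{p}_x)^T$. From the defining identity \eqref{eq:beqid} one reads off $B_d(\tilde{p}_x,D_up_x) = \tfrac{1}{p(e)}B_{p,e}(x)[u]$, and the identity $D_up_x/\tilde{p}_x = p(e)\cdot D_up_x/p_x$ gives $H_d(D_up_x/\tilde{p}_x) = p(e)\,H_{p,e}(x)[u]$. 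Combining these scalings produces
\[ B_{p,e}(x)[u] = p(e)^2\, M_d(\tilde{p}_x)\, H_{p,e}(x)[u]\, M_d(\tilde{p}_x)^T, \]
so I would set $M_{p,e}(x) := p(e)\,M_d(\tilde{p}_x)$.

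It then remains to verify the regularity claim on entries. The coefficients of $p_x(t)$ in $t$ are polynomials in $x$, so the coefficients of $\tilde{p}_x$ are polynomials in $x$ (divided by the nonzero constant $p(e)$); since $M_d$ is linear in these coefficients by Proposition~\ref{prop:cong-uni}, the entries of $M_{p,e}(x)$ are polynomials in $x$. For the inverse, unimodularity of $M_d(\tilde{p}_x)$ means its inverse has entries that are polynomials in the coefficients of $\tilde{p}_x$, hence polynomials in $x$, and so $M_{p,e}(x)^{-1} = p(e)^{-1}M_d(\tilde{p}_x)^{-1}$ has polynomial entries as well. There is no real obstacle in this argument; the only subtlety is tracking the $p(e)$ scalars carefully so that the congruence produced by Proposition~\ref{prop:cong-uni} lines up exactly with the Bézoutian and Hankel matrices as defined in Definition~\ref{def:BH}.
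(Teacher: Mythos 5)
Your proof is correct and takes essentially the same route as the paper: reduce to the monic polynomial $\tilde{p}_x = p_x/p(e)$, invoke Proposition~\ref{prop:cong-uni}, and track the powers of $p(e)$, arriving at the same choice $M_{p,e}(x) = p(e)M_d(\tilde{p}_x)$ with the same unimodularity argument for the inverse. The only cosmetic difference is that you keep $D_up_x$ unscaled and absorb the scalars into the Bézoutian and Hankel matrices directly, whereas the paper rescales $p$ itself and works with $B_{\tilde{p},e}$ and $H_{\tilde{p},e}$; and note that the hypothesis $p(e)\neq 0$ is what is actually needed (the proposition does not assume hyperbolicity, though $p(e)\neq 0$ is implicit in the definition of $H_{p,e}$).
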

\begin{proof}
First assume that $p(e)=1$ so that $p_x(t)$ is monic. The result then follows from Proposition~\ref{prop:cong-uni}
and the fact that the coefficients of $p_x(t)$ are polynomials in $x$. Moreover, in this case $M_{p,e}(x)$ has determinant one.
For the general case, write $p(x) = p(e)\tilde{p}(x)$ where $\tilde{p}_x(t)$ is monic. 
Then $B_{p,e}(x)[u] = p(e)^2B_{\tilde{p},e}(x)[u]$ and $H_{p,e}(x)[u] = H_{\tilde{p},e}(x)[u]$ so we 
have $M_{p,e}(x) = p(e)M_{\tilde{p},e}(x)$ from which we see that 
$M_{p,e}(x)^{-1} = p(e)^{-1}M_{\tilde{p},e}(x)^{-1}$ which has polynomial entries. 
\end{proof}
The following result (the essence of which goes back to Hermite), 
gives a characterization of hyperbolic polynomials in terms of the parameterized Hermite matrix.
A statement in this form can be found, for instance, in~\cite{netzer2013determinantal}.
\begin{theorem}
\label{thm:npt}
Given $p\in \RR[x_1,\ldots,x_n]_d$ and $e\in \RR^n$, we have that $p\in \Hyp_{n,d}(e)$ if, and only if, $H_{p,e}(x)[e]\psd 0$ for all $x\in \RR^n$. 
\end{theorem}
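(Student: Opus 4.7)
The strategy is to reduce the theorem to the classical Hermite form for a single univariate polynomial, applied pointwise in $x$. First I would observe that $D_e p_x(t) = p_x'(t)$, where the prime denotes differentiation in $t$, since $p(x+te+se) = p_x(t+s)$. Combining this with Definition~\ref{def:BH} yields $H_{p,e}(x)[e] = H_d(p_x'/p_x)$. Assuming $p(e) \neq 0$ so that $p_x$ has degree exactly $d$, factor $p_x(t) = p(e)\prod_{k=1}^{d}(t-\alpha_k(x))$ over $\mathbb{C}$; the partial fraction expansion $p_x'(t)/p_x(t) = \sum_k (t-\alpha_k(x))^{-1}$ expanded in descending powers of $t$ produces Laurent coefficients equal to the Newton power sums $s_j(x) = \sum_k \alpha_k(x)^j$, and therefore $[H_{p,e}(x)[e]]_{ij} = s_{i+j-2}(x)$. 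This is precisely the classical Hermite power-sum matrix of $p_x$.

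Next I would invoke Hermite's theorem: the $d\times d$ Hankel matrix of Newton power sums is positive semidefinite if and only if every root of $p_x$ is real. The standard derivation writes this matrix as $V^{T} D V$, where $V$ is a Vandermonde-type matrix built from the distinct roots and $D$ is diagonal recording multiplicities; after pairing each complex conjugate pair $\{\alpha,\bar\alpha\}$ and changing to real coordinates, distinct real roots contribute positive diagonal entries while each conjugate pair contributes an indefinite $2\times 2$ block, forcing real-rootedness as the criterion for $H\succeq 0$. Quantifying over $x\in\RR^n$, $H_{p,e}(x)[e]\succeq 0$ for all $x$ is equivalent to $p_x$ having only real zeros for all $x$. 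The forward direction of the theorem is then immediate from the definition of hyperbolicity, and the converse produces real-rootedness for every $x$.

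The main obstacle I expect is bookkeeping around the sign and nondegeneracy of $p(e)$: the Hankel matrix $H_d(p_x'/p_x)$ depends only on the ratio $p_x'/p_x$ and is therefore invariant under $p\mapsto -p$, so PSD-ness alone cannot distinguish $p(e)>0$ from $p(e)<0$, nor directly rule out the degenerate case $p(e)=0$, where $p_x$ loses degree and the $d\times d$ Hankel matrix drops rank. Handling this requires either a standing hypothesis pinning down the sign convention (using that $p(e)>0$ is part of the definition of $\Hyp_{n,d}(e)$), or an auxiliary inspection to extract the sign of $p(e)$ from supplementary data. Modulo this sign bookkeeping, the content of the theorem is a parameterized restatement of the classical Hermite criterion.
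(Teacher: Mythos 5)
The paper does not actually prove Theorem~\ref{thm:npt}; it cites the result to Netzer, Plaumann, and Thom (the reference called \texttt{netzer2013determinantal}), noting only that ``the essence goes back to Hermite.'' Your proposal reconstructs exactly the argument one would expect to find there: observe that $D_ep_x=p_x'$, identify $H_{p,e}(x)[e]$ as the Hankel matrix of Newton power sums of $p_x$, and invoke the classical Hermite criterion (PSD power-sum Hankel $\iff$ all roots of $p_x$ real) pointwise in $x$. The $V^TDV$ decomposition with a complex-conjugate pairing argument is the standard route, and the computation of the entries as $s_{i+j-2}$ matches the paper's displayed formula $[H_{p,e}(x)[e]]_{ij}=\sum_k(-\lambda_k(x))^{i+j-2}$. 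So your approach is correct and essentially the intended one.

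Your caveat about $p(e)$ is genuine and worth stating more firmly than you do. Since $H_{p,e}(x)[e]$ depends only on the ratio $D_ep_x/p_x$, it is literally unchanged under $p\mapsto -p$, so the ``if'' direction as written fails when $p(e)<0$ and $p_x$ is real-rooted for all $x$ (then $-p\in\Hyp_{n,d}(e)$ and $H\psd 0$, but $p\notin\Hyp_{n,d}(e)$). It also fails when $p(e)=0$: e.g.\ $p(x_1,x_2)=x_1^2-x_2^2$ with $e=(1,1)$ has $p_x'(t)/p_x(t)=1/\bigl(t+\tfrac{x_1+x_2}{2}\bigr)$, giving a rank-one PSD Hankel matrix for every $x$, yet $p\notin\Hyp_{2,2}(e)$. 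Your suggested fix of ``an auxiliary inspection to extract the sign of $p(e)$ from supplementary data'' cannot succeed, since no such data lives in $H$; the degeneracy $p(e)=0$ is detectable from the $(1,1)$ entry (which equals $d$ iff $p(e)\neq 0$), but the sign is not. The correct reading is simply that the ``$\Leftarrow$'' direction carries a standing hypothesis $p(e)>0$, consistent with how such statements appear in the literature and with every use of the theorem in the paper, where the sign of $p(e)$ is always known a priori. Modulo making that hypothesis explicit rather than conjectural, your proof is complete.
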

By using Proposition~\ref{prop:congx}, this characterization of hyperbolicity can also be expressed in 
terms of the parameterized B\'ezoutian.
\begin{corollary}
\label{cor:npt}
Given $p\in \RR[x_1,\ldots,x_n]_d$ and $e\in \RR^n$, we have that $p\in \Hyp_{n,d}(e)$ if, and only if, $B_{p,e}(x)[e]\psd 0$ for all $x\in \RR^n$. 
\end{corollary}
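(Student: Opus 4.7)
The plan is to deduce this corollary directly from Theorem~\ref{thm:npt} and Proposition~\ref{prop:congx}. By Theorem~\ref{thm:npt}, $p\in \Hyp_{n,d}(e)$ holds if and only if $H_{p,e}(x)[e]\psd 0$ for every $x\in \RR^n$, so it suffices to show that the pointwise positive semidefiniteness of $H_{p,e}(x)[e]$ is equivalent to that of $B_{p,e}(x)[e]$.

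To this end, I would specialize Proposition~\ref{prop:congx} at $u=e$ to obtain the congruence identity
\[ B_{p,e}(x)[e] \;=\; M_{p,e}(x)\, H_{p,e}(x)[e]\, M_{p,e}(x)^T \]
as polynomial matrix identities in $x$. The key observation is that Proposition~\ref{prop:congx} provides not only $M_{p,e}(x)$ but also its inverse $M_{p,e}(x)^{-1}$ with polynomial entries. Consequently the identity $M_{p,e}(x)M_{p,e}(x)^{-1}=I$ holds as a polynomial identity, which forces $M_{p,e}(x)$ to be invertible at every point $x\in \RR^n$ (rather than just generically).

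Since congruence by an invertible matrix preserves positive semidefiniteness, at every fixed $x\in \RR^n$ we have $B_{p,e}(x)[e]\psd 0$ if and only if $H_{p,e}(x)[e]\psd 0$. Combining this equivalence with Theorem~\ref{thm:npt} yields the corollary.

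There is no real obstacle here; the one point that warrants explicit mention is that the invertibility of $M_{p,e}(x)$ must hold pointwise for every $x$, not merely generically, which is exactly what Proposition~\ref{prop:congx} guarantees by producing a polynomial inverse.
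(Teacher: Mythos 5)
Your proof is correct and takes essentially the same route as the paper, which simply invokes Theorem~\ref{thm:npt} together with the congruence of Proposition~\ref{prop:congx}. Your explicit observation that the existence of a polynomial inverse forces $M_{p,e}(x)$ to be invertible at every point $x$ (equivalently, $\det M_{p,e}(x)=p(e)^d$ is a nonzero constant) is exactly the detail needed to make the pointwise equivalence rigorous.
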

Our main result for this section shows that these tests for hyperbolicity can be extended to give a
description of the full hyperbolicity cone. We defer the proof until Section~\ref{sec:pf}. 
\begin{theorem}
	\label{thm:main1}
	If $p\in \Hyp_{n,d}(e)$, then 
	\begin{align*}
	\Lambda_+(p,e) & = \{u\in \RR^n\;:\; H_{p,e}(x)[u] \psd 0,\;\;\textup{for all $x\in \RR^n$}\}\\
	& = \{u\in \RR^n\;:\; B_{p,e}(x)[u] \psd 0,\;\;\textup{for all $x\in \RR^n$}\}.
	\end{align*}
\end{theorem}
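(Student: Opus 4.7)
The plan is to first reduce the B\'ezoutian characterization to the Hankel one, then derive a single identity expressing $H_{p,e}(x)[u]$ as a Vandermonde-conjugated diagonal matrix of eigenvalue derivatives, and finally exploit this identity in both directions. The B\'ezoutian-to-Hankel reduction is immediate from Proposition~\ref{prop:congx}: since $H_{p,e}(x)[u]$ and $B_{p,e}(x)[u]$ are congruent via the invertible matrix $M_{p,e}(x)$, they are simultaneously positive semidefinite for every $x$. From here I would focus on the Hankel version.

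Next, I would establish the key identity. At any point $x$ where the hyperbolic eigenvalues $\lambda_1(x),\ldots,\lambda_d(x)$ are distinct, differentiating $p(x+te+su)=p(e)\prod_k(t+t_k(s;x,u))$ from Theorem~\ref{thm:abg} at $s=0$ and dividing by $p_x(t)$ yields the partial fraction expansion
\[
\frac{D_u p_x(t)}{p_x(t)} = \sum_{k=1}^{d} \frac{t_k'(0;x,u)}{t+\lambda_k(x)}.
\]
Expanding each summand as a Laurent series in $1/t$ and matching coefficients with $\sum_{m \geq 1} h_m t^{-m}$ would give
\[
H_{p,e}(x)[u] = V(x)\,\diag\!\bigl(t_1'(0;x,u),\ldots,t_d'(0;x,u)\bigr)\,V(x)^T,
\]
where $V(x)_{ik}=(-\lambda_k(x))^{i-1}$ is a Vandermonde matrix.

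The forward inclusion $\Lambda_+(p,e) \subseteq \{u : H_{p,e}(x)[u] \psd 0\;\forall x\}$ then drops out: Theorem~\ref{thm:abg} gives $t_k'(0;x,u) \geq 0$ for $u \in \Lambda_+(p,e)$, so the diagonal factor is PSD and $H_{p,e}(x)[u] \psd 0$ at every regular $x$; since the entries of $H_{p,e}(x)[u]$ are polynomial in $x$ and the regular locus is dense, this extends by continuity to all $x$. For the reverse inclusion, at any regular $x$ the Vandermonde $V(x)$ is invertible, so $H_{p,e}(x)[u]\psd 0$ forces $t_k'(0;x,u)\geq 0$ for every $k$. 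To conclude $\lambda_k(u) \geq 0$, I would use Theorem~\ref{thm:abg} with $x=0$: since $t_k(0;0,u)=0$ and $t_k(1;0,u)=\lambda_k(u)$, we have $\lambda_k(u)=\int_0^1 t_k'(s;0,u)\,ds$; and the reparameterization identity $t_k(s+\delta;0,u)=t_k(\delta;su,u)$ (up to reordering) means $\{t_k'(s;0,u)\}_k = \{t_k'(0;su,u)\}_k$ as multisets, so the integrand is nonnegative whenever $su$ is a regular point.

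The main obstacle is that homogeneity forces $\lambda_k(su)=s\lambda_k(u)$, so whenever $u$ has repeated hyperbolic eigenvalues, the point $su$ is \emph{never} regular for $s \in (0,1]$, and the Vandermonde argument fails along the entire integration path. My plan to handle this is to perturb: choose $w$ in the interior of $\Lambda_+(p,e)$ so that $\lambda_k(u+\epsilon w)$ are distinct for all sufficiently small $\epsilon>0$. By the already-established forward direction, $H_{p,e}(x)[w] \psd 0$, so $H_{p,e}(x)[u+\epsilon w] = H_{p,e}(x)[u] + \epsilon H_{p,e}(x)[w] \psd 0$, and the previous argument applies to $u+\epsilon w$, yielding $u+\epsilon w \in \Lambda_+(p,e)$. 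Letting $\epsilon \to 0$ and invoking closedness of the hyperbolicity cone then completes the proof.
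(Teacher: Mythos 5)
Your forward direction and the underlying partial-fraction identity are sound and parallel the paper's Lemma~\ref{lem:hermite-clf}. The gap is in your reverse direction, and it is genuine: your argument requires regular points (where all $d$ branches $t_k$ are distinct), both to invert the $d\times d$ Vandermonde and to integrate $t_k'$ along the ray $s\mapsto su$. But the theorem is stated without any square-free assumption, and if $p$ is not square-free---for instance $p=q^2$---then \emph{every} point has eigenvalues of even multiplicity and the regular locus is empty. In that case your perturbation fix cannot work: there is no $w$ in the interior (or anywhere) making $u+\epsilon w$ have distinct eigenvalues, because no point does. Even in the square-free case, the claim that some interior $w$ yields $u+\epsilon w$ regular for small $\epsilon$ requires an argument (you need the line through $u$ in direction $w$ to escape the discriminant locus), though that is repairable.

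The paper sidesteps all of this by working with the \emph{canonical linear functionals} $\lambda_i'(x)[u]:=\sum_{j\in I_i}t_j'(0;x,u)$, which aggregate the branch derivatives over each distinct eigenvalue $\lambda_i(x)$. The partial-fraction expansion then has only $k$ (distinct) poles, and the resulting Vandermonde-type factor is $d\times k$ of full column rank, so positive semidefiniteness of $H_{p,e}(x)[u]$ forces $\lambda_i'(x)[u]\geq 0$ at \emph{every} $x$, not just regular ones. (Concretely the paper picks a Lagrange-interpolation $y$ to isolate each $\lambda_i'$.) Crucially, it then uses the identity $\lambda_i'(x)[x]=m_i\lambda_i(x)$---which follows from the scaling $t_j(s;x,x)=\lambda_i(x)(1+s)$---to evaluate at $x=u$ and conclude $m_i\lambda_i(u)=\lambda_i'(u)[u]\geq 0$ directly, with no integration, no path regularity, and no perturbation. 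If you replace your per-branch $t_k'$ with the aggregated $\lambda_i'$ and replace your integral by the single evaluation $\lambda_i'(u)[u]=m_i\lambda_i(u)$, your outline closes.
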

\begin{remark}
 The $(1,1)$ entry of $B_{p,e}(x)[u]$ is $D_up(x)D_ep(x) - p(x)D_{ue}p(x)$. It follows from Theorem~\ref{thm:main1} that, 
if $p\in \Hyp_{n,d}(e)$, and  
$u\in \Lambda_+(p,e)$, then $[B_{p,e}(x)[u]]_{11} \geq 0$. This is one direction of Kummer, Plaumann, and 
Vinzant's result (Theorem~\ref{thm:kpv}). 
\end{remark}
In what follows, it is sometimes convenient to use the following variations on Theorems~\ref{thm:npt} and~\ref{thm:main1}, respectively.
In some arguments they allow us to reduce the number of variables in certain polynomials by one.
\begin{corollary}
\label{cor:npt2}
Let $p\in \RR[x_1,\ldots,x_n]_d$, $e\in \RR^n$, and let $W\subseteq \RR^n$ be a codimension one subspace such that 
$e\notin W$. Then $p\in \Hyp_{n,d}(e)$ if, and only if, $B_{p,e}(x)[e]\psd 0$ for all $x\in W$,
which holds if, and only if, $H_{p,e}(x)[e] \psd 0$ for all $x\in W$. 
\end{corollary}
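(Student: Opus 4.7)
The key observation is that since $W$ has codimension one in $\RR^n$ and $e \notin W$, we have the direct sum decomposition $\RR^n = W \oplus \RR e$. So every $x \in \RR^n$ has a unique decomposition $x = w + se$ with $w \in W$ and $s \in \RR$. Thus it suffices to show that the conditions $B_{p,e}(x)[e] \psd 0$ and $B_{p,e}(w)[e] \psd 0$ are equivalent whenever $x = w + se$, and similarly for $H_{p,e}$; then the corollary follows from Theorem~\ref{thm:npt} and Corollary~\ref{cor:npt}.

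The plan is to exploit the fact that translating the argument $x$ by a multiple of $e$ is the same as shifting the univariate polynomial $p_x(t) = p(x+te)$ by a scalar. Concretely, $p_{w+se}(t) = p_w(t+s)$ and similarly $D_e p_{w+se}(t) = (D_e p_w)(t+s)$. Applying Lemma~\ref{lem:shift} with $t_0 = s$, I get
\[
B_{p,e}(w+se)[e] = K(s)\, B_{p,e}(w)[e]\, K(s)^T,
\]
where $K(s)$ is the upper triangular matrix with unit diagonal defined in that lemma. Since $K(s)$ is unimodular (hence invertible), this congruence preserves positive semidefiniteness, so $B_{p,e}(x)[e] \psd 0$ for every $x \in \RR^n$ if, and only if, $B_{p,e}(w)[e] \psd 0$ for every $w \in W$. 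Combined with Corollary~\ref{cor:npt}, this establishes the B\'ezoutian characterization.

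For the Hankel matrix version, I invoke Proposition~\ref{prop:congx}: there exist matrices $M_{p,e}(x)$ and $M_{p,e}(x)^{-1}$ with polynomial entries such that $B_{p,e}(x)[e] = M_{p,e}(x) H_{p,e}(x)[e] M_{p,e}(x)^T$. Since $M_{p,e}(x)$ is invertible at every $x$, this congruence also preserves positive semidefiniteness, so $B_{p,e}(w)[e] \psd 0$ for all $w \in W$ is equivalent to $H_{p,e}(w)[e] \psd 0$ for all $w \in W$.

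There is no real obstacle: everything reduces to recognizing that the relevant matrices transform by invertible congruences under the translation $x \mapsto x + se$, and that the translation generates all of $\RR^n$ starting from $W$. The only thing to be slightly careful about is the hypothesis of Lemma~\ref{lem:shift}, which requires the degrees to be at most $d$; this holds here because $p_x$ has degree at most $d$ and $D_e p_x$ has degree at most $d-1$.
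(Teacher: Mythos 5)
Your proof is correct and follows essentially the same route as the paper: the paper packages the shift-equivariance $B_{p,e}(x+t_0e)[u]=K(t_0)B_{p,e}(x)[u]K(t_0)^T$ as Proposition~\ref{prop:eshift} (proved exactly via Lemma~\ref{lem:shift}) and then deduces Corollary~\ref{cor:npt2} from it together with Theorem~\ref{thm:npt}, Corollary~\ref{cor:npt}, and Proposition~\ref{prop:congx}, whereas you inline the same Lemma~\ref{lem:shift} argument directly. The mathematical content and the invoked lemmas are identical.
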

\begin{corollary}
	\label{cor:main2}
	If $p\in \Hyp_{n,d}(e)$ and $W\subseteq \RR^n$ is a subspace such that $\textup{codim}(W) =1$ and $e\notin W$, then 
	\begin{align*}
	\Lambda_+(p,e) & = \{u\in \RR^n\;:\; H_{p,e}(x)[u] \psd 0,\;\;\textup{for all $x\in W$}\}\\
	& = \{u\in \RR^n\;:\; B_{p,e}(x)[u] \psd 0,\;\;\textup{for all $x\in W$}\}.
	\end{align*}
\end{corollary}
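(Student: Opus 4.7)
My approach is to reduce Corollary~\ref{cor:main2} to Theorem~\ref{thm:main1} via a shift argument along the direction $e$. The inclusion ``$\subseteq$'' is immediate from Theorem~\ref{thm:main1}, since requiring positive semidefiniteness only on $W$ is weaker than requiring it on all of $\RR^n$. For the reverse inclusion, it suffices to establish the stronger statement that if $B_{p,e}(x)[u] \psd 0$ for all $x\in W$, then $B_{p,e}(x)[u] \psd 0$ for all $x\in \RR^n$; Theorem~\ref{thm:main1} then finishes the job by giving $u\in \Lambda_+(p,e)$.

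The key observation is that shifting the basepoint $x$ by a multiple of $e$ induces only a congruence transformation on the parameterized B\'ezoutian. Concretely, the identity $p_{x+se}(t) = p(x + (t+s)e) = p_x(t+s)$ and the analogous one for $D_u p_x$ mean, in the notation of Lemma~\ref{lem:shift}, that $p_{x+se} = s\cdot p_x$ and $D_up_{x+se} = s\cdot D_up_x$. Lemma~\ref{lem:shift} then gives
\[ B_{p,e}(x+se)[u] = K(s)\, B_{p,e}(x)[u]\, K(s)^T. \]
The matrix $K(s)$ is upper triangular with ones on the diagonal, and hence invertible, so $B_{p,e}(x+se)[u] \psd 0$ if and only if $B_{p,e}(x)[u] \psd 0$. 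The hypothesis $e\notin W$ combined with $\textup{codim}(W) = 1$ gives a unique decomposition $x = w + se$ with $w \in W$ and $s\in \RR$ for every $x\in \RR^n$, so the assumed positive semidefiniteness on $W$ propagates to all of $\RR^n$.

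To obtain the Hermite matrix formulation, I would invoke Proposition~\ref{prop:congx}, which provides matrices $M_{p,e}(x)$ with polynomial entries (and polynomial inverse, so invertible pointwise) realizing the congruence $B_{p,e}(x)[u] = M_{p,e}(x)H_{p,e}(x)[u]M_{p,e}(x)^T$. This makes the two semidefiniteness conditions equivalent pointwise in $x$, and so in particular the two characterizations of $\Lambda_+(p,e)$ stated in the corollary coincide. I do not anticipate a serious obstacle: the entire argument is a short combination of Lemma~\ref{lem:shift}, the homogeneity of $p$, and the already-established Theorem~\ref{thm:main1}; the only potentially delicate point is making sure that $K(s)$ acts as a genuine congruence at every $s$, and this is immediate from its triangular structure.
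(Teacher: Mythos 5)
Your proof is correct and is essentially the paper's own argument. The shift observation you derive from Lemma~\ref{lem:shift}, namely $B_{p,e}(x+se)[u]=K(s)B_{p,e}(x)[u]K(s)^T$ with $K(s)$ unimodular, is stated in the paper as Proposition~\ref{prop:eshift}, and the paper combines it with Theorem~\ref{thm:main1} (and Proposition~\ref{prop:congx} for the Hermite version) exactly as you do.
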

Corollaries~\ref{cor:npt2} and~\ref{cor:main2} follow from the following observation about parameterized
B\'ezoutians. 
\begin{proposition}
\label{prop:eshift}
Suppose that $p\in \RR[x_1,\ldots,x_n]_d$ and $u\in \RR^n$. 
Then there exists a unimodular polynomial matrix $K(t_0)$ such that 
$B_{p,e}(x+t_0e)[u] = K(t_0)B_{p,e}(x)[u]K(t_0)^T$ for all $x\in \RR^n$ and all $t_0\in \RR$.
\end{proposition}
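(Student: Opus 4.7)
The plan is to reduce the statement directly to Lemma~\ref{lem:shift} by observing that replacing $x$ by $x + t_0 e$ corresponds precisely to a shift of the univariate argument $t$ in the polynomials $p_x(t)$ and $D_u p_x(t)$ that define the parameterized B\'ezoutian.

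First I would unwind the definitions. By definition, $B_{p,e}(x + t_0 e)[u] = B_d(p_{x+t_0 e}, D_u p_{x+t_0 e})$. Since $p_{x+t_0 e}(t) = p(x + t_0 e + te) = p(x + (t+t_0)e) = p_x(t+t_0)$, and similarly $D_u p_{x+t_0 e}(t) = D_u p_x(t + t_0)$, these are exactly the shifted polynomials $(t_0 \cdot p_x)(t)$ and $(t_0 \cdot D_u p_x)(t)$ in the notation of Lemma~\ref{lem:shift}.

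Next I would apply Lemma~\ref{lem:shift} with $a = p_x$ and $b = D_u p_x$, both of degree at most $d$. The lemma yields
\[
B_d(t_0 \cdot p_x, t_0 \cdot D_u p_x) = K(t_0) B_d(p_x, D_u p_x) K(t_0)^T,
\]
where $[K(t_0)]_{jk} = \binom{k-1}{j-1} t_0^{k-j}$. The right-hand side is exactly $K(t_0) B_{p,e}(x)[u] K(t_0)^T$. Since $K(t_0)$ is upper triangular with ones on the diagonal and polynomial entries in $t_0$, it is unimodular, establishing the claim.

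There is no real obstacle here; the only subtlety is confirming that the hypotheses of Lemma~\ref{lem:shift} apply uniformly in $x$ (the degrees of $p_x$ and $D_u p_x$ in $t$ are bounded by $d$, regardless of $x$), and that the matrix $K(t_0)$ produced by the lemma is independent of $x$ (which is clear from its explicit form, depending only on $t_0$). Thus the identity holds for all $x \in \mathbb{R}^n$ and all $t_0 \in \mathbb{R}$ with a single polynomial matrix $K(t_0)$.
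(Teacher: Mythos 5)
Your proof is correct and follows essentially the same route as the paper: identify $p_{x+t_0e}$ and $D_u p_{x+t_0e}$ with the shifted polynomials $t_0\cdot p_x$ and $t_0\cdot D_u p_x$, invoke Lemma~\ref{lem:shift}, and observe that $K(t_0)$ is upper triangular with unit diagonal. The extra remarks you add about uniformity in $x$ and degree bounds are sound but implicit in the paper's shorter argument.
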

\begin{proof}
This follows directly from Lemma~\ref{lem:shift} since
\[ B_{p,e}(x+t_0e)[u] = B_d(t_0\cdot p_x,t_0\cdot D_up_x) =K(t_0)B_d(p_x,D_up_x)K(t_0)^T\]
for all $x\in \RR^n$ and all $t_0\in \RR$. Furthermore, one can directly check from Lemma~\ref{lem:shift} that $K(t_0)$ 
is upper triangular and $\det(K(t_0)) = 1$.  
\end{proof}
It follows immediately that $B_{p,e}(x)[u] \psd 0$ for all $x\in \RR^n$ if, and only if, $B_{p,e}(x)[u] \psd 0$ for all $x\in W$
whenever $W$ is a codimension one subspace of $\RR^n$ and $e\notin W$. Corollary~\ref{cor:npt2} then follows from Theorem~\ref{thm:npt} and Corollary~\ref{cor:npt}. 
Similarly Corollary~\ref{cor:main2} then follows from Theorem~\ref{thm:main1}.

\subsection{Hyperbolic certificates of nonnegativity}

One consequence of Theorem~\ref{thm:main1} is that if $p\in \Hyp_{n,d}(e)$ and $u\in \Lambda_+(p,e)$, 
then both of the following polynomials
\begin{align}
	\phi_{p,e}^B(x,y)[u] &:= y^TB_{p,e}(x)[u]y\quad\textup{and}\label{eq:phiB}\\
	\phi_{p,e}^H(x,y)[u] &:= y^TH_{p,e}(x)[u]y,\label{eq:phiH}
\end{align}
are globally nonnegative in $x$ and $y$. By composing the polynomials $\phi_{p,e}^B(x,y)[u]$ or $\phi_{p,e}^H(x,y)[u]$ 
with other polynomial maps we obtain further nonnegative polynomials. 
\begin{definition}
\label{def:hypnn}
We say that a polynomial $q$ in $m$ variables has a \emph{hyperbolic certificate of nonnegativity with respect to $(p,e)$}
if there exists $u\in \Lambda_+(p,e)$ and polynomial maps $f:\RR^m\rightarrow \RR^n$ and $g:\RR^m\rightarrow \RR^d$ such that 
\begin{equation}
	\label{eq:hypcert} q(z) = \phi_{p,e}^H(f(z),g(z))[u]\quad\textup{for all $z\in \RR^m$}.
\end{equation}
\end{definition}
Since the parameterized B\'ezoutian and Hermite matrix are the same up to a unimodular congruence 
transformation (see Proposition~\ref{prop:congx}), 
there is no difference between using $\phi_{p,e}^B$ or $\phi_{p,e}^H$ in Definition~\ref{def:hypnn}. We can transform 
from one representation to another by changing $g$ appropriately. From now on, we will often write $\phi_{p,e}$ instead
of $\phi_{p,e}^H$ unless we specifically want to work with the B\'ezoutian formulation.

Any polynomial that has a hyperbolic certificate of nonnegativity is nonnegative due to Theorem~\ref{thm:main1}.
Moreover, given a polynomial $q\in \RR[x_1,\ldots,x_m]_{2d}$, the problem of searching for a hyperbolic certificate of nonnegativity
of $q$ can be cast as a hyperbolic feasibility problem:
\[ \textup{find}\;\; u\in \Lambda_+(p,e)\;\; \textup{such that}\;\; q(z) = \phi_{p,e}(f(z),g(z))[u]\quad\textup{for all $z\in \RR^m$}\]
which aims to find a point in the intersection of the hyperbolicity cone and the affine subspace defined by, for instance,
equating coefficients in the polynomial identity~\eqref{eq:hypcert}.

\paragraph{Recovering sums of squares certificates}
A homogeneous polynomial $q\in \RR[x_1,\ldots,x_m]_{2d}$ is a \emph{sum of squares} if there
is a positive integer $k$ and homogeneous polynomials $p_1,\ldots,p_k\in \RR[x_1,\ldots,x_m]_{d}$ such that 
$q(x) = \sum_{i=1}^{k}p_i(x)^2$ for all $x\in \RR^m$. Clearly any sum of squares is nonnegative. Furthermore, it is well known that
if $m_d(x)$ is the vector of all monomials that are homogeneous of degree $d$ in $m$ variables then 
$q$ is a sum of squares if, and only if,
\[ \textup{there exists}\quad Q\in \cS_{+}^{\binom{m+d-1}{d}}\quad\textup{such that}\quad q(x) = m_d(x)^TQm_d(x).\]
This allows one to search for a sum of squares certificate of the nonnegativity of $q$ via solving a semidefinite feasibility problem. 

If $q$ is a sum of squares, we can choose the data ($p$, $e$, $f$, and $g$) in Definition~\ref{def:hypnn} to 
give a hyperbolic certificate of nonnegativity for $q$. This shows that our notion of hyperbolic certificates of 
nonnegativity captures sums of squares as a special case. 

\begin{proposition}
\label{prop:cert-sos}
Let $q\in \RR[x_1,\ldots,x_m]_{2d}$ be a sum of squares, and let 
$Q \in \mathcal{S}^{\binom{m+d-1}{d}}_+$ be such that $q(x) = m_d(x)^TQm_d(x)$.
Then $q$ has a hyperbolic certificate of nonnegativity as
\[ q(x) = \phi_{p,e}(F(x),g(x))[U]\]
where  
\[ p(X) = \det(X),\quad e = I,\quad U = \begin{bmatrix} 0 & 0\\0 & Q\end{bmatrix},\;\; 
F(x) = \begin{bmatrix} 0 & m_d(x)^T\\m_d(x) & 0\end{bmatrix}\quad \textup{and} \quad g(x) = e_2.\]
\end{proposition}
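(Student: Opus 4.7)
The plan is to verify directly that the stated data satisfies Definition~\ref{def:hypnn}. There are really just two things to check: that $U$ lies in the relevant hyperbolicity cone, and that evaluating $\phi_{p,e}$ at the stated inputs returns $q$.

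First I would observe that $p(X) = \det(X)$ is hyperbolic with respect to $e = I$ (this is the standard spectral-theorem fact: $\det(X + tI)$ has only real roots for every symmetric $X$), and that its hyperbolicity cone is the cone of positive semidefinite symmetric matrices. Thus the requirement $U \in \Lambda_+(p,e)$ reduces to $U \succeq 0$, which is immediate from $Q \succeq 0$ and the block-diagonal form of $U$.

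Next I would compute $\phi_{p,e}(F(x),g(x))[U]$. By Example~\ref{eg:hermdet}, the parameterized Hermite matrix for the determinant is
\[ [H_{p,e}(X)[U]]_{ij} = \tr(U X^{i+j-2}). \]
Since $g(x) = e_2$, the definition of $\phi_{p,e}^H$ in~\eqref{eq:phiH} gives
\[ \phi_{p,e}(F(x),g(x))[U] = [H_{p,e}(F(x))[U]]_{22} = \tr\!\bigl(U\, F(x)^2\bigr). \]

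Finally I would carry out the block-matrix computation. Writing $N = \binom{m+d-1}{d}$ so that $m_d(x) \in \RR^N$ and $F(x)$ is a symmetric $(1{+}N)\times(1{+}N)$ matrix, one checks
\[ F(x)^2 = \begin{bmatrix} m_d(x)^T m_d(x) & 0 \\ 0 & m_d(x) m_d(x)^T \end{bmatrix}, \]
and hence
\[ \tr(U F(x)^2) = \tr\!\bigl(Q\, m_d(x) m_d(x)^T\bigr) = m_d(x)^T Q\, m_d(x) = q(x), \]
as required. Strictly speaking one should also note that the degree of the outer polynomial $\det$ matches the length of $g(x) = e_2$, i.e., that $1{+}N \geq 2$, which is automatic.

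The argument is essentially a short verification; the only obstacle is bookkeeping the block sizes and recognizing that the trick is simply to encode the monomial vector $m_d(x)$ in the off-diagonal block of a symmetric matrix so that squaring produces the rank-one outer product $m_d(x)m_d(x)^T$ against which $Q$ is paired.
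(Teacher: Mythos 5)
Your proof is correct and follows essentially the same approach as the paper: reduce via $g(x)=e_2$ to the $(2,2)$ entry of the parameterized Hermite matrix, invoke Example~\ref{eg:hermdet} to get $\tr(U F(x)^2)$, and compute the block form of $F(x)^2$. You are slightly more thorough than the paper's proof in explicitly verifying $U\in\Lambda_+(p,e)$ and the dimension constraint, but these additions are implicit in the paper's version.
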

\begin{proof}
From our choice of $p$ and $g$, we see that 
$\phi_{p,e}(F(x),g(x))[U] = e_2^TH_{\det,I}(F(x))[U]e_2$.
From~\eqref{eg:hermdet},
\begin{align*}
	 e_2^TH_{\det,I}(F(x))[U]e_2 = \tr(U F(x)^2) & = 
\tr\left(\begin{bmatrix} 0 & 0\\0 & Q\end{bmatrix}\begin{bmatrix} m_d(x)^Tm_d(x) & 0\\0 & m_d(x)m_d(x)^T\end{bmatrix}\right)\\
	& =  m_d(x)^TQm_d(x) = q(x).
\end{align*}
\end{proof}
We have now seen that every sum of squares has a hyperbolic certificate of nonnegativity. 
In Section~\ref{sec:sos} we will show that there are polynomials that have hyperbolic certificates of nonnegativity, but 
that are not sums of squares.
 
\section{Hyperbolic certificates and sums of squares}
\label{sec:sos}
In this section we study conditions under which polynomials with hyperbolic certificates of nonnegativity are, or are not, 
sums of squares. We will often phrase this in terms of sums of squares certificates of the positive semidefiniteness 
of the parameterized B\'ezoutian (and Hermite matrix), which are matrices with polynomial entries. 
\begin{definition}
	A $d\times d$ symmetric matrix $P\in \cS[x_1,\ldots,x_n]^d$ with polynomial entries is a 
	\emph{matrix sum of squares} if there exists a positive integer $\ell$ and a $d\times \ell$ matrix $Q$ with polynomial
	entries such that $P(x) = Q(x)Q(x)^T$.
\end{definition}
It is well known that $P(x)$ is a matrix sum of squares if and only if the polynomial $y^TP(x)y$ is a sum of squares in $x$ and $y$. 
We will freely pass between these two equivalent definitions. 

The following is the central definition of this section and Section~\ref{sec:cubics}. 
It specifies a class of hyperbolic polynomials for which we have not just a sum of 
squares certificate of their hyperbolicity, but also a sum of squares description of
the hyperbolicity cone.  
\begin{definition}
\label{def:soshyp}
If $p\in \Hyp_{n,d}(e)$, we say that $p$ is \emph{SOS-hyperbolic with respect to $e$} if $B_{p,e}(x)[u]$ is a matrix sum of 
squares for all $u\in \Lambda_+(p,e)$. 
\end{definition}
We use the shorthand notation $\SOSHyp_{n,d}(e)\subseteq \Hyp_{n,d}(e)$ for the collection of polynomials 
that are homogeneous of degree $d$ in $n$ variables and SOS-hyperbolic with respect to $e$. If $p\in \SOSHyp_{n,d}(e)$, 
and $q$ has a hyperbolic certificate of nonnegativity via an identity of the form $q(z) = \phi_{p,e}(f(z),g(z))$, then 
$q$ is a sum of squares. As such, we are most interested in hyperbolic polynomials that are \emph{not} SOS-hyperbolic, 
since these give  new certificates of nonnegativity. 

Table~\ref{tab:nnh} describes our understanding of the values of $n$ and $d$ for which 
the sets $\SOSHyp_{n,d}(e)$ and $\Hyp_{n,d}(e)$ coincide. In this section we develop some preparatory results, 
establish the equality cases of the table, and the remainder of the first and third columns of the table 
(corresponding to $d=2$ and $d\geq 4$). 
In Section~\ref{sec:cubics} we focus on the $d=3$ case.

\begin{table}
{\footnotesize
\caption{The $(n,d)$ entry of the table is $=$ if $\Hyp_{n,d}(e) = \SOSHyp_{n,d}(e)$ for all $e\in \RR^n$. 
The $(n,d)$ entry of the table is 
$\neq$ if $\Hyp_{n,d}(e) \neq \SOSHyp_{n,d}(e)$ for some $e$. The entry is 
`?' in the cases that are yet to be resolved. The entries in bold are new results
(with the exception of $d=4$ and $5\leq n \leq 8$ which follow from~\cite[Example 5.11]{kummer2015hyperbolic}).}
\label{tab:nnh} 
\begin{center}
\begin{tabular}{c|ccc}
$n\setminus d$ & $d=2$ & $d=3$ & $d\geq 4$\\
\hline
$n=3$ & $=$ & $=$ & $=$ \\
$n=4$ & $=$ & $=$ & $\bm{\neq} $\\
$5\leq n \leq 42$ & $=$ & ? & $\bm{\neq}$\\
$n\geq 43$ & $=$ & $\bm{\neq}$ & $\bm{\neq}$
\end{tabular}
\end{center}
}
\end{table}

Before giving proofs establishing the entries of the table, we give a number of equivalent characterizations
for polynomials that are SOS-hyperbolic with respect to some direction $e$. 
\begin{proposition}
\label{prop:sos-equiv}
Suppose that $p\in \SOSHyp_{n,d}(e)$ and let $p_x(t):=p(x+te)$ and $D_up_x(t) = D_up(x+te)$. Let $W$ be an $n\times (n-1)$
matrix such that the $n\times n$ matrix  $\begin{bmatrix} W & e\end{bmatrix}$ has full rank. Then the following are equivalent
\begin{enumerate}
	\item $u\in \Lambda_+(p,e)$
	\item $B_{p,e}(x)[u]\in \cS[x_1,\ldots,x_n]^d$ is a matrix sum of squares
	\item $B_{p,e}(Wz)[u]\in \cS[z_1,\ldots,z_{n-1}]^d$ is a matrix sum of squares
	\item $H_{p,e}(x)[u]\in \cS[x_1,\ldots,x_n]^d$ is a matrix sum of squares 
	\item $H_{p,e}(Wz)[u]\in \cS[z_1,\ldots,z_{n-1}]^d$ is a matrix sum of squares 
\end{enumerate}
\end{proposition}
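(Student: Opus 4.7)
The plan is to let statement (2) serve as a hub and establish each of (1), (3), (4), (5) as equivalent to it. The implication $(1) \Rightarrow (2)$ is exactly Definition~\ref{def:soshyp}, which is where the standing hypothesis $p \in \SOSHyp_{n,d}(e)$ is used. Conversely, a matrix sum of squares is pointwise positive semidefinite, so $(2) \Rightarrow (1)$ follows from Theorem~\ref{thm:main1}; notably, this direction requires no hypothesis on $p$.

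I would then dispatch $(2) \Leftrightarrow (4)$ using Proposition~\ref{prop:congx}: writing $B_{p,e}(x)[u] = M_{p,e}(x) H_{p,e}(x)[u] M_{p,e}(x)^T$, where both $M_{p,e}(x)$ and $M_{p,e}(x)^{-1}$ have polynomial entries, a matrix SOS factorization $H_{p,e}(x)[u] = Q(x) Q(x)^T$ transfers to $B_{p,e}(x)[u] = (M_{p,e}(x) Q(x))(M_{p,e}(x) Q(x))^T$, and conversely by premultiplying by $M_{p,e}(x)^{-1}$. For $(2) \Leftrightarrow (3)$, since $\begin{bmatrix} W & e \end{bmatrix}$ is invertible, every $x \in \RR^n$ decomposes uniquely as $x = W z(x) + t_0(x) e$ with $z(x)$ and $t_0(x)$ linear in $x$. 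Proposition~\ref{prop:eshift} then gives $B_{p,e}(x)[u] = K(t_0(x)) B_{p,e}(W z(x))[u] K(t_0(x))^T$, with $K$ a polynomial (in fact unimodular) matrix. The forward direction is immediate by specializing to $x = W z$; the reverse follows by substituting a matrix SOS factorization of $B_{p,e}(W z)[u]$ into the shift identity, using that the matrix SOS property is preserved under polynomial substitution and polynomial congruence. The equivalence $(4) \Leftrightarrow (5)$ follows either by the same argument or, more economically, from the chain $(5) \Leftrightarrow (3) \Leftrightarrow (2) \Leftrightarrow (4)$ already assembled.

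The main subtlety, though mild, is the $(3) \Rightarrow (2)$ step: one must promote a matrix SOS identity in $n-1$ variables to one in $n$ variables via the polynomial shift $K(t_0(x))$. This goes through because $t_0(x)$ and $z(x)$ are linear functionals of $x$ and the matrix SOS property is stable under both polynomial substitution and polynomial congruence. Apart from this, the proof is a routine assembly of Theorem~\ref{thm:main1}, Proposition~\ref{prop:congx}, and Proposition~\ref{prop:eshift}, with no new ideas required.
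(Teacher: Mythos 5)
Your proof is correct and follows essentially the same route as the paper: $(1)\Leftrightarrow(2)$ from the definition, $(2)\Leftrightarrow(4)$ via Proposition~\ref{prop:congx}, and $(2)\Leftrightarrow(3)$ via Proposition~\ref{prop:eshift}, with the remaining equivalences assembled by transitivity. You are in fact slightly more careful than the paper's one-line justification of $(1)\Leftrightarrow(2)$: the definition of SOS-hyperbolicity only gives $(1)\Rightarrow(2)$, and you correctly flag that the converse needs Theorem~\ref{thm:main1} (pointwise PSD characterization of $\Lambda_+(p,e)$), which the paper leaves implicit.
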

\begin{proof}
 The equivalence of 1 and 2 is just the definition of $p$ being SOS-hyperbolic with respect to $e$. 
The equivalence of 2 and 4 and of 3 and 5 both follow from Proposition~\ref{prop:congx}. The equivalence of 2 and 3 follows 
from Proposition~\ref{prop:eshift}.
\end{proof}
\begin{remark}
	If $p\in \SOSHyp_{n,d}(e)$, then $\Lambda_+(p,e)$ can be expressed as the 
	projection of a spectrahedron, i.e., the image of a spectrahedron under a linear map. This is because 
	the cone of matrix sums of squares in $\cS[x_1,\ldots,x_n]^{d}$ is a linear image of 
	the positive semidefinite cone.  
\end{remark}
To discuss the relationship between our work and that of Kummer, Plaumann, and Vinzant~\cite{kummer2015hyperbolic}, 
we introduce a slight variation on Definition~\ref{def:soshyp}.
\begin{definition}
\label{def:wsishyp}
If $p\in \Hyp_{n,d}(e)$, we say that $p$ is \emph{weakly SOS-hyperbolic with respect to $e$} if 
$D_ep(x)D_up(x) - p(x)D^2_{ue}p(x) = [B_{p,e}(x)[u]]_{1,1}$
 is a sum of squares for all $u\in \Lambda_+(p,e)$. 
\end{definition}
\begin{question}
Clearly, if $p$ is SOS-hyperbolic with respect to $e$, then it is weakly SOS-hyperbolic with respect to $e$. 
Under what additional assumptions on $p$ (if any) are these two notions actually equivalent?
\end{question}

In Proposition~\ref{prop:pdet}, to follow, we show  that if a power of $p$ has a definite determinantal representation, 
then $p$ is SOS-hyperbolic. This is (at least formally) a slight strengthening of
a result of Kummer, Plaumann, and Vinzant~\cite[Corollary 4.3]{kummer2015hyperbolic}, which can be rephrased as saying 
that if a power of $p$ has a definite determinantal representation, then $p$ is \emph{weakly} SOS-hyperbolic. 
Proposition~\ref{prop:pdet} also generalizes 
result of Netzer, Plaumann, and Thom~\cite[Theorem 1.6]{netzer2013determinantal}, which can be rephrased as saying 
that if a power of $p$ has a definite determinantal representation, then $H_{p,e}(x)[e]$ is a matrix sum of squares.
The proof presented here generalizes the argument of Netzer, Plaumann, and Thom.
\begin{proposition}
\label{prop:pdet}
If $p\in \Hyp_{n,d}(e)$ and there is a positive integer $\ell$ such that $p^\ell$ has a definite determinantal representation, 
then $p\in \SOSHyp_{n,d}(e)$. 
\end{proposition}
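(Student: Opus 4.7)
The plan is to generalize the Netzer--Plaumann--Thom argument (which treats the case $u=e$, giving $H_{p,e}(x)[e]$) to arbitrary $u\in \Lambda_+(p,e)$, by using a determinantal representation of $p^\ell$ to write $H_{p,e}(x)[u]$ explicitly as $N(x)N(x)^T$ for some polynomial matrix $N(x)$.

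First, I would unpack the hypothesis: write $p(x)^\ell = \det(A(x))$, where $A(x) = \sum_i A_i x_i$ and $A(e) \succ 0$. Normalizing by $A(e)^{1/2}$, introduce $\tilde{A}(x) = A(e)^{-1/2}A(x)A(e)^{-1/2}$, which has polynomial entries in $x$ and satisfies $\tilde{A}(e) = I$ and
\[ p_x(t)^\ell = p(x+te)^\ell = \det(A(e))\det(\tilde{A}(x) + tI). \]
The key consequence (already used in~\cite{netzer2013determinantal}) is that the $d$ hyperbolic eigenvalues of $x$ coincide, up to multiplicity $\ell$, with the eigenvalues of $\tilde{A}(x)$. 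In particular, $u\in \Lambda_+(p,e)$ if and only if $\tilde{A}(u)\psd 0$.

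Next, I would compute the generating function for the Hankel entries by taking the log-derivative of the above identity in the direction $u$. Using Jacobi's formula,
\[ \ell\,\frac{D_u p_x(t)}{p_x(t)} = \frac{D_u p_x^\ell(t)}{p_x^\ell(t)} = \tr\bigl((\tilde{A}(x)+tI)^{-1}\tilde{A}(u)\bigr), \]
and the Neumann expansion $(\tilde{A}(x)+tI)^{-1} = \sum_{k\geq 1}(-\tilde{A}(x))^{k-1} t^{-k}$ valid for large $t$ gives
\[ [H_{p,e}(x)[u]]_{ij} = \tfrac{1}{\ell}\tr\bigl((-\tilde{A}(x))^{i+j-2}\tilde{A}(u)\bigr). \]
Now, since $u\in \Lambda_+(p,e)$, factor $\tilde{A}(u) = R^TR$ with $R$ a constant matrix, and split the power as $(-\tilde{A}(x))^{i+j-2} = (-\tilde{A}(x))^{i-1}(-\tilde{A}(x))^{j-1}$. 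Since $\tilde{A}(x)$ is symmetric, defining the polynomial matrix $V_i(x) = R(-\tilde{A}(x))^{i-1}$ yields
\[ [H_{p,e}(x)[u]]_{ij} = \tfrac{1}{\ell}\tr\bigl(V_i(x) V_j(x)^T\bigr) = \tfrac{1}{\ell}\langle V_i(x),V_j(x)\rangle_F. \]
Stacking $\mathrm{vec}(V_i(x))^T$ as the $i$-th row of a polynomial matrix $N(x)$ gives $H_{p,e}(x)[u] = \tfrac{1}{\ell} N(x)N(x)^T$, which exhibits $H_{p,e}(x)[u]$ as a matrix sum of squares. Applying Proposition~\ref{prop:congx} (or equivalently the equivalence of items 2 and 4 in Proposition~\ref{prop:sos-equiv}) transfers this to a matrix-sum-of-squares certificate for $B_{p,e}(x)[u]$, establishing $p\in \SOSHyp_{n,d}(e)$.

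The main obstacle, already present in~\cite{netzer2013determinantal}, is the clean calculation of the Hankel entries via the log-derivative identity; once that formula is in hand the factorization is straightforward. A minor but crucial point to check carefully is that $u\in \Lambda_+(p,e)$ genuinely implies $\tilde{A}(u)\psd 0$ — which requires observing that $\Lambda_+(p,e) = \Lambda_+(p^\ell,e)$ and that the hyperbolicity cone of $\det(A(\cdot))$ with respect to $e$ is exactly $\{u : A(u)\psd 0\}$ when $A(e)\pd 0$.
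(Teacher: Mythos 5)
Your proof is correct and follows essentially the same route as the paper's: pass to the determinantal representation of $p^\ell$, compute the Hankel entries via the trace formula $\tr\bigl((-\tilde A(x))^{i+j-2}\tilde A(u)\bigr)$, factor $\tilde A(u)\psd 0$ to exhibit a Gram matrix, and transfer to the B\'ezoutian via Proposition~\ref{prop:congx}. The only cosmetic difference is that you divide by $\ell$ to factor the $d\times d$ matrix $H_{p,e}(x)[u]$ directly, whereas the paper observes $\ell H_{p,e}(x)[u]$ sits inside $H_{p^\ell,e}(x)[u]$ and factors the latter; both are the Netzer--Plaumann--Thom argument extended to arbitrary $u\in\Lambda_+(p,e)$.
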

\begin{proof}
We will show that $H_{p,e}(x)[u]$ is a matrix sum of squares whenever $u\in
\Lambda_+(p,e)$. It then follows from Proposition~\ref{prop:sos-equiv} that $B_{p,e}(x)[u]$ is a matrix sum of
squares whenever $u\in \Lambda_+(p,e)$.
Let $\tilde{p}(x) = p(x)^\ell$, and note that 
\[ \frac{D_u\tilde{p}(x+te)}{\tilde{p}(x+te)} = \ell\frac{D_u p(x+te) p(x+te)^{\ell-1}}{p(x+te)^{\ell}} = \ell \frac{D_u p(x+te)}{p(x+te)}.\]
In particular, $\ell H_m(p_x,D_up_x) = H_m(\tilde{p}_x,D_u\tilde{p}_x)$ for all $m\geq d\ell$. 
As such, it suffices to show that $H_{\tilde{p},e}(x)[u]$ is a matrix sum of squares.

Since $\tilde{p}(x)$ has a definite determinantal representation, we can write
\[ \tilde{p}(x) = \det(\mathcal{A}(x))\quad\textup{where}\quad \mathcal{A}(x) = \sum_{i=1}^{n}A_ix_i\]
for real symmetric matrices $A_1,\ldots,A_{n}$ such that  $\mathcal{A}(e)  = I$. 
If $u\in \Lambda_+(p,e) = \Lambda_+(\tilde{p},e)$, then  $\mathcal{A}(u) \psd 0$ and so has a positive semidefinite square root $\mathcal{A}(u)^{1/2}$.
In Example~\ref{eg:hermdet} we explicitly computed the matrix $H_{\tilde{p},e}(x)[u]$ and obtained 
\[ H_{\tilde{p},e}(x)[u]_{ij} = \tr\left[\mathcal{A}(x)^{i-1} \mathcal{A}(u)\mathcal{A}(x)^{j-1}\right] = 
\langle \mathcal{A}(u)^{1/2}\mathcal{A}(x)^{i-1},\mathcal{A}(u)^{1/2}\mathcal{A}(x)^{j-1}\rangle.\]
Here the inner product is $\langle X,Y\rangle = \tr(X^TY)$. This is clearly a Gram matrix with factors that are polynomials in $x$, and so $H_{\tilde{p},e}(x)[u]$ is a matrix
sum of squares whenever $u\in \Lambda_+(p,e)$. 
\end{proof}

All of the equality signs in Table~\ref{tab:nnh} follow directly from known results about when powers of hyperbolic polynomials
have definite determinantal representations. In what follows, for the cases $d=2$ and $n=3$ we give direct proofs, avoiding 
arguments about determinantal representations. It would be interesting to find a similar direct argument if $(n,d) = (4,3)$.  
\begin{proposition}
If $d=2$ or $n=3$ or $(n,d)=(4,3)$, then $\Hyp_{n,d}(e) = \SOSHyp_{n,d}(e)$.
\end{proposition}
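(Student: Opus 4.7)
The proposition breaks into three disjoint cases, and the plan is to handle each separately: $d=2$ by direct computation, $n=3$ by reducing to a bivariate matrix polynomial problem, and $(n,d)=(4,3)$ by invoking Proposition~\ref{prop:pdet}.

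For $d=2$, I would first apply Proposition~\ref{prop:sos-equiv} together with a linear change of variables to reduce to the normal form $p(x) = x_1^2 - x_2^2 - \cdots - x_n^2$ with $e = e_1$. Writing $u = (u_1,v) \in \Lambda_+(p,e)$ (so $u_1 \geq \|v\|$) and $x = (x_1,z)$, a direct computation of the $2\times 2$ parameterized B\'ezoutian yields
\[ B_{p,e}(x)[u] = \begin{pmatrix} 2 u_1 x_1^2 - 4 x_1 v^\top z + 2 u_1 \|z\|^2 & 2u_1 x_1 - 2 v^\top z \\ 2u_1 x_1 - 2v^\top z & 2 u_1 \end{pmatrix}. \]
Assuming $u_1 > 0$, a Schur-complement decomposition combined with the Lagrange identity $\|v\|^2\|z\|^2 - (v^\top z)^2 = \sum_{i<j}(v_iz_j - v_j z_i)^2$ gives
\[ B_{p,e}(x)[u] = \tfrac{1}{2 u_1} w(x) w(x)^\top + \tfrac{2}{u_1} \Bigl(p(u)\|z\|^2 + \sum_{i<j}(v_i z_j - v_j z_i)^2\Bigr) E_{11}, \]
with $w(x) = (2u_1 x_1 - 2v^\top z,\, 2u_1)^\top$ and $E_{11} = e_1 e_1^\top$. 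Since $p(u) \geq 0$ whenever $u\in\Lambda_+(p,e)$, this is manifestly a matrix sum of squares in $x$; the boundary $u_1 = 0$ forces $u = 0$ and is trivial.

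For $n = 3$, I would use Corollary~\ref{cor:main2} to reduce the matrix-SOS test to positive semidefiniteness of $B_{p,e}(x)[u]$ for $x$ in a two-dimensional subspace $W$ not containing $e$. In coordinates on $W$, $B_{p,e}(Wz)[u]$ is then a PSD symmetric $d\times d$ polynomial matrix in two variables, and I would finish by invoking the matrix analog of Hilbert's bivariate theorem---every PSD symmetric polynomial matrix in two variables is a matrix sum of squares---and transferring the conclusion back via Proposition~\ref{prop:sos-equiv}. For $(n,d) = (4,3)$, the plan is to appeal directly to Proposition~\ref{prop:pdet} together with the classical fact (originating in the geometry of cubic surfaces) that every hyperbolic cubic in four variables admits a definite $3\times 3$ determinantal representation, so $\ell = 1$ suffices.

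The main obstacle is the $n=3$ case: it relies on the non-elementary matrix-valued extension of Hilbert's bivariate sum-of-squares theorem, and producing a self-contained direct factorization of the parameterized B\'ezoutian purely from hyperbolic machinery---analogous to the explicit $d=2$ computation---would be more satisfying but seems significantly more delicate.
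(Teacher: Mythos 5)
Your treatment of the $d=2$ and $n=3$ cases is essentially correct and close to the paper's approach. For $d=2$ the paper works directly with the Hermite matrix in arbitrary coordinates, does an LDL factorization, and observes that the resulting Schur complement is a nonnegative quadratic form and hence automatically a sum of squares; your version normalizes $p$ to the Lorentz form first and then uses the Lagrange identity to write the Schur complement explicitly as a sum of squares. Both are fine, though the paper's version avoids both the normal-form reduction (which, as you implicitly assume, leaves SOS-hyperbolicity invariant since $B_{p\circ T,T^{-1}e}(x)[u]=B_{p,e}(Tx)[Tu]$) and the degenerate case of a rank-deficient quadratic. For $n=3$, your reduction to a bivariate PSD polynomial matrix via Proposition~\ref{prop:eshift}/Proposition~\ref{prop:sos-equiv}, followed by the bivariate matrix-SOS theorem of~\cite[Remark 5.10]{blekherman2016sums}, is exactly the paper's argument.

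The $(n,d)=(4,3)$ case, however, has a genuine gap. The ``classical fact'' you invoke---that every hyperbolic cubic in four variables admits a definite $3\times 3$ symmetric determinantal representation, so that $\ell=1$ suffices in Proposition~\ref{prop:pdet}---is false. A dimension count already rules it out: $3\times 3$ symmetric linear pencils in four variables form a $24$-dimensional space, the congruence action of $\mathrm{GL}_3$ generically has $9$-dimensional orbits, so the image of the determinant map in the $20$-dimensional space of quaternary cubics is a proper subvariety (dimension at most $16$). Since hyperbolic cubics form an open (hence full-dimensional) subset, a generic hyperbolic quaternary cubic has no symmetric $3\times 3$ determinantal representation at all, let alone a definite one. (This is in sharp contrast to the plane-curve case $n=3$, where the count works out exactly and Helton--Vinnikov applies.) What the paper actually uses is Buckley and Ko\v{s}ir's theorem that the \emph{square} $p^2$ of a smooth hyperbolic cubic in four variables has a definite $6\times 6$ determinantal representation---so $\ell=2$, not $\ell=1$---combined with a density-and-limiting argument to drop the smoothness hypothesis. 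Your proposal needs to be replaced by that argument (or some other proof that $p$ is SOS-hyperbolic); as written it rests on a nonexistent theorem.
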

\begin{proof}
The case $(n,d) = (4,3)$ follows from a result of 
Buckley and Ko\v{s}ir~\cite[Theorem 6.4]{buckley2007determinantal}, 
which says that the square of any smooth hyperbolic cubic form in $4$ variables
has a definite determinantal representation. Combining this with the fact that smooth hyperbolic polynomials
are dense in all hyperbolic polynomials, it follows that the square of any hyperbolic cubic in four variables
has a definite determinantal representation by a  
limiting argument~\cite[Proof of Corollary 4.10]{plaumann2013determinantal}. 

The case $n=3$ follows from the celebrated Helton-Vinnikov
theorem~\cite{helton2007linear} that (in its homogeneous
form~\cite{lewis2005lax}) says that $p$ has a definite determinantal
representation. Here we give an alternative direct argument.  If we choose a
basis $e,e',e''$ for $\RR^3$, then, by Proposition~\ref{prop:eshift}, \[
B_{p,e}(x_0e+x_1e'+x_2e'')[u] = K(x_0)B_{p,e}(x_1e'+x_2e'')[u]K(x_0)^T\] for
some polynomial matrix $K(x_0)$. As such, it suffices to show that if $u\in
\Lambda_+(p,e)$, then  $B_{p,e}(x_1e'+x_2e'')[u]$ is a matrix sum of squares.
This is a positive semidefinite matrix-valued polynomial for which each entry
is a homogeneous form in $x_1,x_2$.  From~\cite[Remark
5.10]{blekherman2016sums} it is known that all such polynomial matrices are
matrix sums of squares.   

In the case $d=2$, we again give a direct argument. First note that 
if $u\in \Lambda_+(p,e)$, then $H_{p,x}(x)[u] \psd 0$ for all $x$. Moreover, since $d=2$ there is a positive constant $c_0$, 
and polynomials $c_1(x)$ and $c_2(x)$ homogeneous of degree one and two respectively, such that 
\[ H_{p,e}(x)[u] = \begin{bmatrix} c_0 & c_1(x)\\c_1(x) & c_2(x)\end{bmatrix} = 
\begin{bmatrix} 1 & 0\\c_1(x)/c_0 & 1\end{bmatrix} \begin{bmatrix} c_0 & 0\\0 & c_2(x) - c_1(x)^2/c_0\end{bmatrix}
\begin{bmatrix} 1 & c_1(x)/c_0\\0 & 1\end{bmatrix}.\]
As such, $H_{p,e}(x)[u]$ is a matrix sum of squares if, and only if, the nonnegative quadratic form $c_2(x) - c_1(x)^2/c_0$ 
is a sum of squares. Since any nonnegative quadratic form is a sum of squares, we are done.
\end{proof}

\subsection{Hyperbolic certificates that are not sums of squares: $(n,d)=(4,4)$}

In this section we give an explicit example of a polynomial $p$ of degree four in four variables that is 
hyperbolic, but not SOS-hyperbolic, with respect to $e$. The example is the specialized V\'amos polynomial
\begin{equation}
	\label{eq:svamos} 
	p_{G_4}(x_1,x_2,x_3,x_4) = x_3^2x_4^2 + 4(x_1x_2x_3+x_1x_2x_4+x_1x_3x_4+x_2x_3x_4)(x_1+x_2+x_3+x_4),
\end{equation}
which is hyperbolic with respect to $(1,1,1,1)$ and has hyperbolicity cone that contains the nonnegative orthant. 
This is one of a much larger class of hyperbolic polynomials constructed from $k$-uniform hypergraphs
by Amini and Br\"and\'en~\cite[Theorem 9.4]{amini2018non}. 
The name arises because the basis generating polynomial of the V\'amos matroid is 
\[ p_{V_8}(z_1,z_2,\ldots,z_8) = \sum_{B\in \mathcal{B}}\prod_{i\in B}z_i\]
where $\mathcal{B} = \binom{[8]}{4}\setminus\{\{1, 2,
3, 4\}, \{1, 2, 5, 6\}, \{1, 2, 7, 8\}, \{3, 4, 5, 6\}, \{5, 6, 7, 8\}\}$. The specialized V\'amos polynomial is
the restriction $p_{G_4}(x_1,x_2,x_3,x_4) = p_{V_8}(x_1,x_1,x_2,x_2,x_3,x_3,x_4,x_4)$ 
of $p_{V_8}$ to a four-dimensional subspace. It is known that no power of 
$p_{V_8}$ has a definite determinantal representation~\cite{branden2011obstructions} 
and that the same holds for $p_{G_4}$~\cite{kummer2016note}. 

Kummer, Plaumann, and Vinzant~\cite[Example 5.11]{kummer2015hyperbolic} showed that the hyperbolic 
polynomial of degree four in five variables obtained by restricting $p_{V_8}$ to the subspace $z_1=z_2,z_3=z_4,z_5=z_6$
is not SOS-hyperbolic (in our language). The following result shows that the polynomial is still not SOS-hyperbolic
when further restricted to $p_{G_4}$.
\begin{proposition}
\label{prop:svamos}
	Let $p_{G_4}$ be the polynomial defined in~\eqref{eq:svamos}. Then $p_{G_4}$ is hyperbolic with respect to 
	$e\in (0,0,1,1)$, and $u = (0,0,0,1) \in \Lambda_+(p_{G_4},e)$, and yet 
	$B_{p_{G_4},e}(x)[u]$ is not a matrix sum of squares.
\end{proposition}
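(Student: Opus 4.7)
The plan is to verify the three components of the statement in turn: hyperbolicity of $p_{G_4}$ with respect to $e=(0,0,1,1)$, the containment $u=(0,0,0,1)\in \Lambda_+(p_{G_4},e)$, and---the main content---the failure of $B_{p_{G_4},e}(x)[u]$ to be a matrix sum of squares. The first two should follow quickly from the Amini-Br\"and\'en result recalled just above the statement, which gives $p_{G_4}\in \Hyp_{4,4}(\ones)$ with $\Lambda_+(p_{G_4},\ones)\supseteq \RR^4_{\geq 0}$. A direct univariate computation of the roots of $t\mapsto p_{G_4}(t\ones - e)$ shows that they are all strictly positive, so $e$ lies in the \emph{interior} of $\Lambda_+(p_{G_4},\ones)$. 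G\r{a}rding's theorem then implies $p_{G_4}\in \Hyp_{4,4}(e)$ and $\Lambda_+(p_{G_4},e)=\Lambda_+(p_{G_4},\ones)$; since $u\in \RR^4_{\geq 0}$, this gives $u\in \Lambda_+(p_{G_4},e)$ as required.

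For the remaining claim, I would first apply Proposition~\ref{prop:sos-equiv} to cut the number of variables by one. Choose $W:\RR^3\to \RR^4$ with $Wz=(z_1,z_2,z_3,0)$, so the image of $W$ is a hyperplane not containing $e$. The proposition then says $B_{p_{G_4},e}(x)[u]$ is a matrix sum of squares iff $B_{p_{G_4},e}(Wz)[u]\in \cS[z_1,z_2,z_3]^4$ is a matrix sum of squares. I would compute this reduced $4\times 4$ matrix polynomial concretely via the B\'ezoutian identity~\eqref{eq:beqid} applied to $p_x(t)=p_{G_4}(Wz+te)$ and $D_up_x(t)=\partial_{x_4}p_{G_4}(Wz+te)$---both univariate polynomials of degree at most four in $t$ with coefficients in $\RR[z_1,z_2,z_3]$---and assemble the resulting symmetric matrix.

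The central and most delicate task is then to show that the scalar polynomial $f(z,y):=y^T B_{p_{G_4},e}(Wz)[u]\,y$, which is globally nonnegative by Theorem~\ref{thm:main1}, is \emph{not} a sum of squares in $\RR[z_1,z_2,z_3,y_1,\ldots,y_4]$. The hardest step, and the main obstacle, is producing an explicit dual witness: a linear functional $L$ on the relevant finite-dimensional space of polynomials such that $L(g^2)\geq 0$ for every $g$ in an appropriate Newton-polytope-supported monomial basis, yet $L(f)<0$. Inspired by the Kummer-Plaumann-Vinzant treatment of the related five-variable restriction of $p_{V_8}$~\cite[Example 5.11]{kummer2015hyperbolic}, I would obtain such an $L$ by solving the sum-of-squares-feasibility SDP dual numerically, rounding the solution to a rational moment matrix, and then verifying correctness (positive semidefiniteness of the moment matrix, and strict negativity of its pairing with the coefficient vector of $f$) by exact symbolic computation. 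Locating a small-support certificate---adapted to this further restriction from five to four variables, where the ambient SOS cone is larger---is the essentially nonroutine part of the argument.
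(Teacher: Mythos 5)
Your handling of the first two claims matches the paper: appeal to Amini--Br\"and\'en for hyperbolicity with respect to points in the nonnegative orthant, then deduce $u\in\Lambda_+(p_{G_4},e)$ and (since $p_{G_4}(e)>0$) that $e$ is a direction of hyperbolicity. That part is fine.

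For the negative claim, however, your plan misses the simplification that makes the computation actually tractable, and it leaves the central step unexecuted. You propose to show that the full $4\times 4$ matrix polynomial $B_{p_{G_4},e}(Wz)[u]$ is not a matrix sum of squares by producing a dual witness for the quadratic form $y^TB_{p_{G_4},e}(Wz)[u]y$ in the seven variables $(z_1,z_2,z_3,y_1,\ldots,y_4)$. The paper instead observes that it suffices to show the \emph{scalar} $(1,1)$ entry of the B\'ezoutian --- which by the remark after Theorem~\ref{thm:main1} is the Kummer--Plaumann--Vinzant polynomial $D_up\cdot D_ep - p\cdot D^2_{ue}p$ --- fails to be a sum of squares, since every diagonal entry of a matrix sum of squares is a scalar sum of squares. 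Combining this with the hyperplane restriction $x_1+x_2+x_3+x_4=0$ reduces everything to a ternary sextic in $(x_1,x_2,x_3)$. This is dramatically smaller than your seven-variable object: the Newton polytope has only five vertices, the candidate squares live in a five-dimensional subspace (further cut down by the real zeros of the sextic), and the dual certificate is a $5\times 5$ rational PSD matrix that one can write out explicitly and verify by hand. Your plan, by contrast, would require a moment matrix over a much larger monomial basis, and you do not carry out --- or show that one can carry out --- the rounding-and-verification step; you explicitly defer it as ``the essentially nonroutine part.'' So as written this is a sketch of a harder computation, not a proof. Note also that the passage to a hyperplane and to a diagonal entry both preserve the SOS property in the one direction you need (a restriction of an SOS is an SOS, and a diagonal entry of a matrix SOS is an SOS), so you do not even need the equivalences of Proposition~\ref{prop:sos-equiv} for the negative conclusion; the hypothesis $e\notin W$ is only needed for the converse direction, which is irrelevant here.
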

\begin{proof}
From~\cite[Theorem 9.4]{amini2018non} we know that $p_{G_4}$ is 
hyperbolic with respect to any point in the nonnegative orthant, so $u\in
\Lambda_+(p_{G_4},e)$. Since $p_{G_4}(e) > 0$ it follows that $e$ is a
direction of hyperbolicity for $p_{G_4}$. 
We will show that $[B_{p_{G_4},e}(x)[u]]_{11}$ is not a sum of squares when restricted
to the subspace $x_1+x_2+x_3+x_4=0$. Explicitly, consider the ternary sextic form 
\begin{align*}
	 q(x_1,x_2,x_3) & = e_1^TB_{p_{G_4},e}(x_1,x_2,x_3,-x_1-x_2-x_3)[u]e_1\\
	& = 32x_1^4x_2^2 + 56x_1^4x_2x_3 + 28x_1^4x_3^2 + 64x_1^3x_2^3 + 168x_1^3x_2^2x_3 +
168x_1^3x_2x_3^2 +\\
& \quad 64x_1^3x_3^3 + 32x_1^2x_2^4 + 168x_1^2x_2^3x_3 + 280x_1^2x_2^2x_3^2 + 176x_1^2x_2x_3^3 + 46x_1^2x_3^4 +\\
& \quad 56x_1x_2^4x_3 + 168x_1x_2^3x_3^2 + 176x_1x_2^2x_3^3 + 76x_1x_2x_3^4 + 12x_1x_3^5 + 28x_2^4x_3^2 + \\
&\quad 64x_2^3x_3^3 + 46x_2^2x_3^4 + 12x_2x_3^5 + 2x_3^6.
\end{align*}
We will show that $q$ is not a sum of squares by constructing a separating hyperplane with rational coefficients. 
Ordering the monomials of $q$ as above, we write $q = \sum_{i=1}^{22}c_ix^{\alpha_i}$
where 
\[
 c = (32,56,28,64,168,168,64,32,168,280,176,46,56,168,176,76,12,28,64,46,12,2).
\]
If $\mathcal{N}$ is the Newton polytope of $q$ (the convex hull of the $22$ 
exponent vectors $\alpha_i$), then the extreme points of $\mathcal{N}$ are the integer points
$(4,2,0), (2,4,0), (4,0,2), (0,4,2), (0,0,6)$. 
As such, if $q = \sum_i q_i^2$ is a sum of squares, then the 
$q_i$ are in the subspace spanned by the monomials with exponent vectors in $\frac{1}{2}\mathcal{N}$, i.e., 
 $\{x_1^2x_2,x_1x_2^2,x_1^2x_3,x_1x_3^2,x_1x_2x_3,x_1x_3^2,x_2x_3^2,x_3^3\}$.
Furthermore, since $q(1,-1,0) = q(1,0,-1) = q(0,1,-1)=0$ it follows that each $q_i$ must also vanish at these three points. 
As such, if $q = \sum_i q_i^2$, then the $q_i$ must be in the $5$-dimensional subspace spanned by the entries
\[ 
 m(x) = \begin{bmatrix} x_1x_2x_3 & (x_1+x_2)x_1x_2 & (x_1+x_3)x_1x_3 & (x_2+x_3)x_2x_3 & (x_1+x_2+x_3)x_3^2\end{bmatrix}^T
\]
and so, if $q$ were a sum of squares, then there would be $G \psd 0$ such that
\begin{equation}
	\label{eq:qsosid} \sum_{i=1}^{22}c_ix^{\alpha_i} = \tr(G m(x)m(x)^T).
\end{equation}
To show that this is impossible, define a linear functional $\ell$ on the span of the $x^{\alpha_i}$ by
\begin{multline*} (\ell(x^{\alpha_i}))_{i=1,\ldots,22} =  
 (81,-249,323,40,24,-186,32,81,24,233,-89,15,-249,\\
-186,-89,322,-412,323,32,15,-412,1186).\end{multline*}
The linear functional $\ell$ was obtained by solving a semidefinite feasibility problem numerically and rounding the solution.
It satisfies $\sum_{i=1}^{22}c_i\ell(x^{\alpha_i}) = -144$. 
If we apply $\ell$ to the entries of $m(x)m(x)^T$, we obtain
\[\ell( m(x)m(x)^T) = \begin{bmatrix} 233 & 48 & -275 & -275 & 144\\48 & 242 & -178 & -178 & -84\\
-275 & -178 & 402 & 377 & -117\\
-275 & -178 & 377 & 402 & -117\\
144 & -84 & -117 & -117 & 212\end{bmatrix} \psd 0.\]
Applying $\ell$ to both sides of~\eqref{eq:qsosid} would give,
$-144 = \sum_{i=1}^{22}c_i\ell(x^{\alpha_i}) = \tr(G \ell(m(x)m(x)^T)) \geq 0$,
a contradiction.
\end{proof}
\subsection{Increasing the degree and number of variables}

We now describe a particular way to take any hyperbolic polynomial $p$ that is not SOS-hyperbolic
and construct from it a hyperbolic polynomial that has larger degree and/or more variables, that is not SOS-hyperbolic.
We begin by showing how to increase the degree alone.
\begin{proposition}
	\label{prop:dv-increase}
	Suppose that $p\in \Hyp_{n,d}(e)$
	$u\in \Lambda_+(p,e)$ is such that $B_{p,e}(x)[u]$ is not a matrix sum of squares, and $\ell(x)$ is a linear form
	such that $\ell(e)>0$ and $\ell(u) = 0$. 
	Let $\tilde{p}(x) = \ell(x)^kp(x)$. Then $\tilde{p}\in \Hyp_{n,d+k}(e)$, $u\in \Lambda_+(\tilde{p},e)$,
	and $B_{\tilde{p},e}(x)[u]$ is not a matrix sum of squares.
\end{proposition}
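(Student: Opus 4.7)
The plan is to verify the hyperbolicity and cone membership claims directly, and then to reduce the matrix sum of squares question to the trivial observation that principal submatrices of a matrix sum of squares are matrix sums of squares. To make that observation usable I will work with the Hankel rather than B\'ezoutian representation, and translate between them at the end via Proposition~\ref{prop:sos-equiv}.

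First I would check that $\tilde{p} = \ell^k p \in \Hyp_{n,d+k}(e)$. Since $\ell(e) > 0$, the linear form $\ell$ is hyperbolic with respect to $e$, and a product of polynomials hyperbolic with respect to $e$ is again hyperbolic with respect to $e$. Explicitly, $\tilde{p}(x+te) = (\ell(x) + t\ell(e))^k\, p(x+te)$, so the hyperbolic eigenvalues of any $x$ with respect to $\tilde{p}$ are the hyperbolic eigenvalues of $x$ with respect to $p$, together with $k$ copies of $-\ell(x)/\ell(e)$. Specializing to $x = u$ and combining $\ell(u) = 0$ with $u \in \Lambda_+(p,e)$ gives $u \in \Lambda_+(\tilde{p},e)$.

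Next comes the key computation. Writing $\ell_x(t) := \ell(x+te) = \ell(x) + t\ell(e)$, we have $\tilde{p}_x(t) = \ell_x(t)^k p_x(t)$; since $\ell(u) = 0$, differentiation in the direction $u$ does not touch the factor $\ell_x(t)^k$, so $D_u\tilde{p}_x(t) = \ell_x(t)^k D_u p_x(t)$, and therefore
\[ \frac{D_u \tilde{p}_x(t)}{\tilde{p}_x(t)} = \frac{D_u p_x(t)}{p_x(t)}. \]
Hence the Laurent coefficients $h_1, h_2, \ldots$ appearing in Definition~\ref{def:BH} are identical for $\tilde{p}$ and $p$, which forces $H_{p,e}(x)[u]$ to be exactly the top-left $d \times d$ principal submatrix of $H_{\tilde{p},e}(x)[u]$. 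Now if $B_{\tilde{p},e}(x)[u]$ were a matrix sum of squares, then $H_{\tilde{p},e}(x)[u]$ would be too by Proposition~\ref{prop:sos-equiv}; restricting a polynomial Gram factorization to the first $d$ rows would yield a matrix sum of squares decomposition of the principal submatrix $H_{p,e}(x)[u]$, and a further application of Proposition~\ref{prop:sos-equiv} would make $B_{p,e}(x)[u]$ a matrix sum of squares, contradicting the hypothesis.

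The main (and rather minor) obstacle is choosing the right representation. A direct attack through the B\'ezoutian yields a $(d+k) \times d$ polynomial matrix $N(x)$ with $B_{\tilde{p},e}(x)[u] = N(x)\, B_{p,e}(x)[u]\, N(x)^T$, but the top-left $d \times d$ block of $N(x)$ is lower triangular with $\ell(x)^k$ on the diagonal, so $N(x)$ admits no polynomial left inverse and sum-of-squares status does not transfer in the desired direction. Passing through the Hankel picture sidesteps this entirely, because Hankel matrices of different sizes built from the same rational function genuinely nest as principal submatrices.
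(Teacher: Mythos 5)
Your argument is essentially the paper's proof: both hinge on observing that $\ell(u)=0$ gives $D_u\tilde p_x/\tilde p_x = D_up_x/p_x$, so that $H_{p,e}(x)[u]$ sits as the top-left $d\times d$ principal block of $H_{\tilde p,e}(x)[u]$, and then transfer non-SOS-ness up through that nesting and Proposition~\ref{prop:sos-equiv}. (Minor note: the extra hyperbolic eigenvalues contributed by the factor $\ell^k$ are $\ell(x)/\ell(e)$, not $-\ell(x)/\ell(e)$, but this is immaterial here since $\ell(u)=0$; your closing remark on why the B\'ezoutian does not nest cleanly is a nice observation not spelled out in the paper.)
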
	
\begin{proof}
	Since $\ell(x)$ is linear and $\ell(e) > 0$ it follows that $\ell$ is hyperbolic with respect to $e$
	and $u\in \Lambda_+(\ell,e)$. Now $\Lambda_+(\tilde{p},e) = \Lambda_+(\ell,e) \cap \Lambda_+(p,e)$ so 
	$u\in \Lambda_+(\tilde{p},e)$. Furthermore, since $D_u\ell(x+te) = 0$, 
	\[ \tilde{p}_x(t) =\ell(x+te)^kp(x+te)\quad\textup{and}\quad D_u\tilde{p}_x = \ell(x+te)^kD_up(x+te).\]
	Then $\frac{D_u\tilde{p}_x}{\tilde{p_x}} = \frac{D_up_x}{p_x}$. As such, $H_{\tilde{p},e}(x)[u]$
	contains $H_{p,e}(x)[u]$ as the upper-left $d\times d$ submatrix. It follows that if $B_{p,e}(x)[u]$ is not
	a matrix sum of squares, then $H_{p,e}(x)[u]$, and consequently $H_{\tilde{p},e}(x)[u]$
	and $B_{\tilde{p},e}(x)[u]$, are not matrix sums of squares.	
\end{proof}
Next we discuss one way to take a hyperbolic polynomial that is not SOS-hyperbolic and construct a new hyperbolic polynomial of the 
same degree in more variables that is not SOS-hyperbolic. We begin with a simple preliminary result.
\begin{proposition}
	\label{prop:n-increase}
	If $p\in \SOSHyp_{n,d}(e)$ and $L$ is a subspace of $\RR^n$ such that $e\in L$, then 
	$\left. p\right|_{L}$ (the restriction of $p$ to $L$) is SOS-hyperbolic with respect to $e$.
\end{proposition}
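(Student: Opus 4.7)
The plan is to reduce everything to the observation that the parameterized B\'ezoutian commutes with restriction, then push a matrix SOS decomposition for $p$ through this restriction by substitution.

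First I would set up coordinates on $L$. Let $k = \dim L$ and choose a basis $v_1,\ldots,v_{k-1},v_k$ of $L$ with $v_k = e$; let $V$ be the $n\times k$ matrix whose columns are these basis vectors. Then $p\big|_L$ can be identified with $\tilde p(z) := p(Vz) \in \RR[z_1,\ldots,z_k]_d$, and the direction of hyperbolicity $e\in L$ corresponds to $\tilde e = e_k\in\RR^k$, so $V\tilde e = e$.

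Next I would verify two compatibility statements. The first is that for any $\tilde u\in\RR^k$ (with associated vector $u=V\tilde u\in L$) and any $z\in\RR^k$,
\[
\tilde p_z(t) = \tilde p(z+t\tilde e) = p(Vz + te) = p_{Vz}(t)
\quad\text{and}\quad
D_{\tilde u}\tilde p_z(t) = D_{u}p_{Vz}(t),
\]
the second equality just being the chain rule. Plugging these into Definition~\ref{def:BH} immediately gives
\[
B_{\tilde p,\tilde e}(z)[\tilde u] = B_d\bigl(\tilde p_z, D_{\tilde u}\tilde p_z\bigr) = B_d\bigl(p_{Vz}, D_u p_{Vz}\bigr) = B_{p,e}(Vz)[u].
\]
In particular the hyperbolic eigenvalues of $\tilde u$ with respect to $(\tilde p,\tilde e)$ coincide with those of $V\tilde u$ with respect to $(p,e)$, so $\tilde u\in\Lambda_+(\tilde p,\tilde e)$ if, and only if, $V\tilde u\in \Lambda_+(p,e)$ (equivalently, $\Lambda_+(\tilde p,\tilde e) = V^{-1}\Lambda_+(p,e)$).

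Now fix $\tilde u\in\Lambda_+(\tilde p,\tilde e)$, and set $u = V\tilde u\in \Lambda_+(p,e)$. Since $p\in\SOSHyp_{n,d}(e)$, there exists a polynomial matrix $Q(x)$ with entries in $\RR[x_1,\ldots,x_n]$ such that $B_{p,e}(x)[u] = Q(x)Q(x)^T$. Substituting $x=Vz$ gives
\[
B_{\tilde p,\tilde e}(z)[\tilde u] = B_{p,e}(Vz)[u] = Q(Vz)\,Q(Vz)^T,
\]
and $Q(Vz)$ is a polynomial matrix in $z_1,\ldots,z_k$, so $B_{\tilde p,\tilde e}(z)[\tilde u]$ is a matrix sum of squares. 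This holds for every $\tilde u\in\Lambda_+(\tilde p,\tilde e)$, which is exactly the definition that $\tilde p = p\big|_L$ is SOS-hyperbolic with respect to $e$.

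There is no substantive obstacle here: the entire content is the restriction identity $B_{\tilde p,\tilde e}(z)[\tilde u] = B_{p,e}(Vz)[u]$ together with the trivial fact that precomposing a matrix SOS with a linear map $z\mapsto Vz$ yields a matrix SOS. The only point that requires a moment's care is the role of $e\in L$: this is precisely what guarantees that the univariate polynomials $p_x(t)$ and $\tilde p_z(t)$ agree on $L$, and hence that the B\'ezoutians match.
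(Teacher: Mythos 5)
Your proof is correct and follows the same route as the paper: you make explicit, in coordinates $x = Vz$, the observation the paper records as ``it is straightforward to check that $B_{\left.p\right|_L,e}(x)[u]$ is the restriction of $B_{p,e}(x)[u]$ to $L$,'' and then push the matrix SOS decomposition $B_{p,e}(x)[u]=Q(x)Q(x)^T$ through the substitution $x\mapsto Vz$. The identification $\Lambda_+(\tilde p,\tilde e)=V^{-1}\Lambda_+(p,e)$ matches the paper's $\Lambda_+(\left.p\right|_L,e)=\Lambda_+(p,e)\cap L$.
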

\begin{proof}
If $u\in L$, then it is straightforward to check that $B_{\left.p\right|_L,e}(x)[u]$ is the restriction of $B_{p,e}(x)[u]$ to $L$. As such, if $B_{p,e}(x)[u]$ is a 
matrix sum of squares for all $u\in \Lambda_+(p,e)$, it follows that $B_{\left.p\right|_{L},e}(x)[u]$ is a matrix sum of squares
for all $u\in \Lambda_+(\left.p\right|_{L},e) = \Lambda_+(p,e)\cap L$. 
\end{proof}
The following simple construction of hyperbolic polynomials is 
a special case of, e.g., the \emph{additive convolution} (or \emph{finite free convolution}) of hyperbolic polynomials~\cite{marcus2015finite}. It involves two disjoint sets of $n$ and $n'$ variables, respectively denoted by $x$ and $x'$. 
\begin{lemma}
If $p\in \Hyp_{n,d}(e)$ and $q\in \Hyp_{n',1}(e')$,  then 
\[ \tilde{p}(x,x') = q(e')p(x) + q(x')D_ep(x) \in \Hyp_{n+n',d}((e,e'))\]
and $\Lambda_{+}(\tilde{p},(e,e')) \supseteq \{(x,x'): x\in \Lambda_{+}(p,e),\; x'\in\Lambda_+(q,e')\}$.
\end{lemma}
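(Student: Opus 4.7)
The plan is to express $\tilde p$ evaluated along the line $(x,x')+t(e,e')$ as the derivative of a polynomial whose real-rootedness (and, for the cone claim, the sign of its roots) is manifest.

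First I would check positivity at $(e,e')$. Euler's identity for the homogeneous $p$ gives $D_ep(e)=d\,p(e)$, so
\[
\tilde p(e,e') \;=\; q(e')\,p(e) + q(e')\,D_ep(e) \;=\; (d+1)\,q(e')\,p(e) \;>\; 0,
\]
using $p(e)>0$ and $q(e')>0$.

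Next, fix $(x,x')\in\RR^{n+n'}$ and set $f(t):=p(x+te)$, $a:=q(e')>0$, and $b:=q(x')$. Since $q$ is linear, $q(x'+te')=at+b$; also $D_ep(x+te)=f'(t)$. Substituting into the definition of $\tilde p$ yields the key identity
\[
\tilde p\bigl(x+te,\,x'+te'\bigr) \;=\; a\,f(t) + (at+b)\,f'(t) \;=\; \frac{d}{dt}\bigl[(at+b)\,f(t)\bigr].
\]
Because $p\in \Hyp_{n,d}(e)$, $f$ has only real zeros; together with the real root $-b/a$ of $at+b$, the degree-$(d+1)$ polynomial $(at+b)f(t)$ is real-rooted. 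The derivative of a real-rooted univariate polynomial is real-rooted (Rolle's theorem plus multiplicity bookkeeping), so $\tilde p(x+te,x'+te')$ has only real zeros in $t$. Combined with $\tilde p(e,e')>0$, this gives $\tilde p\in \Hyp_{n+n',d}((e,e'))$.

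For the cone containment, suppose further that $x\in\Lambda_+(p,e)$ and $x'\in\Lambda_+(q,e')$. Writing $f(t)=p(e)\prod_{i=1}^d(t+\lambda_i(x))$, the condition $\lambda_i(x)\geq 0$ forces every root of $f$ to lie in $(-\infty,0]$. Likewise $x'\in\Lambda_+(q,e')$ says $b/a=q(x')/q(e')\geq 0$, so $-b/a\leq 0$. Hence all $d+1$ roots of $(at+b)f(t)$ lie in $(-\infty,0]$. For a real-rooted polynomial, every root of the derivative lies in the convex hull of the original roots, hence also in $(-\infty,0]$. This means every hyperbolic eigenvalue of $(x,x')$ with respect to $(\tilde p,(e,e'))$ is nonnegative, so $(x,x')\in\Lambda_+(\tilde p,(e,e'))$.

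There is no essential obstacle here: the whole argument rests on noticing the product-rule identity above, after which both conclusions follow from elementary facts about real-rooted univariate polynomials.
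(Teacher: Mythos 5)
Your proof is correct, and its engine is the same as the paper's, but you make the argument self-contained rather than citing it. The paper introduces $w(x,x')=p(x)q(x')$, observes that $w$ is hyperbolic with respect to $(e,e')$ with hyperbolicity cone $\Lambda_+(p,e)\times\Lambda_+(q,e')$, notes the identity $\tilde p = D_{(e,e')}w$, and then invokes Renegar's theorem that taking a directional derivative of a hyperbolic polynomial (in a direction from its hyperbolicity cone) preserves hyperbolicity and enlarges the hyperbolicity cone. Your key computation $\tilde p(x+te,\,x'+te')=\tfrac{d}{dt}\bigl[(at+b)f(t)\bigr]$ is precisely this identity restricted to a line, since $(at+b)f(t)=q(x'+te')\,p(x+te)=w(x+te,\,x'+te')$; you then re-derive the needed instance of Renegar's result via Rolle's theorem and the fact that the roots of the derivative of a real-rooted polynomial lie in the convex hull of the original roots. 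The paper's route is shorter and appeals to a general tool; yours has the virtue of requiring nothing beyond elementary facts about real-rooted univariate polynomials, at the cost of not making the structural observation $\tilde p=D_{(e,e')}(pq)$ explicit.
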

\begin{proof}
If $w(x,x') = p(x)q(x')$ then $w$ is hyperbolic with respect to $(e,e')$ and its hyperbolicy cone 
is the Cartesian product of $\Lambda_+(p,e)$ and $\Lambda_+(q,e')$. 
Furthermore
\[ \tilde{p}(x,x') = D_{(e,e')}w(x,x')\]
and so $\Lambda_{+}(\tilde{p},(e,e')) \supseteq \Lambda_+(w,(e,e'))$~\cite{renegar2006hyperbolic}.
\end{proof}
\begin{proposition}
\label{prop:nup}
If $p\in \Hyp_{n,d}(e)$ is not SOS-hyperbolic with respect to $e$ and $q\in \Hyp_{n',1}(e')$, then 
$\tilde{p}(x,x') = q(e')p(x) + q(x')D_ep(x)\in \Hyp_{n+n',d}((e,0))$ is not SOS-hyperbolic with respect to $(e,0)$.
\end{proposition}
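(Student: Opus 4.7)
The statement makes two claims: that $\tilde{p}$ is hyperbolic with respect to $(e,0)$ (strengthening the preceding lemma, which only gave hyperbolicity with respect to $(e,e')$), and that it is not SOS-hyperbolic with respect to $(e,0)$. My plan is to verify each in turn, with the non-SOS-hyperbolicity claim following almost immediately from the restriction principle of Proposition~\ref{prop:n-increase}.

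For hyperbolicity with respect to $(e,0)$, the key computation is
\[ \tilde{p}((x,x') + t(e,0)) = q(e')\, p_x(t) + q(x')\, p_x'(t),\]
where $p_x(t) = p(x+te)$. This has leading coefficient $q(e')p(e)>0$ in $t$, so it is a genuine degree-$d$ polynomial. Since $p_x$ has only real zeros (because $p\in\Hyp_{n,d}(e)$) and its derivative $p_x'$ interlaces those zeros by Rolle's theorem, the Hermite-Kakeya-Obreschkoff theorem guarantees that every real linear combination $\alpha p_x + \beta p_x'$ has only real zeros. Thus $\tilde{p}((x,x')+t(e,0))$ is real-rooted in $t$ for every $(x,x')$, and since $\tilde{p}(e,0) = q(e')p(e) > 0$, we obtain $\tilde{p}\in\Hyp_{n+n',d}((e,0))$.

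For the non-SOS-hyperbolicity, I would argue by contradiction. Assume $\tilde{p}\in\SOSHyp_{n+n',d}((e,0))$ and consider the coordinate subspace $L = \RR^n \times \{0\}\subseteq \RR^{n+n'}$, which contains $(e,0)$. Because $q$ is linear and homogeneous, $q(0)=0$, and hence
\[ \tilde{p}(x,0) = q(e')p(x) + q(0)D_e p(x) = q(e')p(x).\]
Identifying $L$ with $\RR^n$ via $(x,0)\leftrightarrow x$, Proposition~\ref{prop:n-increase} forces $q(e')p$ to be SOS-hyperbolic with respect to $e$. By the bilinearity of the B\'ezoutian in its two polynomial arguments, $B_{q(e')p,e}(x)[u] = q(e')^2 B_{p,e}(x)[u]$, so multiplication by the positive constant $q(e')$ preserves the matrix-sum-of-squares property (and also leaves the hyperbolicity cone unchanged). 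We conclude $p \in \SOSHyp_{n,d}(e)$, contradicting the hypothesis.

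The main subtlety is really just the first step—verifying hyperbolicity with respect to the boundary direction $(e,0)$ rather than the interior direction $(e,e')$ supplied by the preceding lemma—since the non-SOS-hyperbolicity claim then drops out directly from applying the restriction principle to the natural coordinate subspace where $\tilde{p}$ collapses to a positive scalar multiple of $p$.
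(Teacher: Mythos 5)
Your proof is correct, and the non-SOS-hyperbolicity half follows the paper's own argument essentially verbatim: restrict to the coordinate subspace $L = \RR^n\times\{0\}$ containing $(e,0)$, observe that $\tilde{p}|_L$ is a positive scalar multiple of $p$, and invoke Proposition~\ref{prop:n-increase}. You are in fact slightly more careful than the paper here, since the paper writes $\tilde{p}|_L = p$ (implicitly taking $q(e')=1$), whereas you explicitly track the positive constant $q(e')$ and note via $B_{q(e')p,e}(x)[u] = q(e')^2 B_{p,e}(x)[u]$ that it is harmless.

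Where you genuinely diverge is the hyperbolicity half. The paper gets $\tilde{p}\in\Hyp_{n+n',d}((e,0))$ with no new computation: by the preceding lemma $(e,0)\in\Lambda_+(\tilde{p},(e,e'))$, and since $\tilde{p}(e,0) = q(e')p(e) > 0$ the point actually lies in the \emph{open} hyperbolicity cone with respect to $(e,e')$; G\r{a}rding's classical result that a hyperbolic polynomial is hyperbolic with respect to every interior direction of its cone then finishes the job. Your route is more hands-on: compute $\tilde{p}((x,x')+t(e,0)) = q(e')p_x(t) + q(x')p_x'(t)$, note $p_x'$ interlaces $p_x$ by Rolle, and apply Hermite--Kakeya--Obreschkoff to conclude real-rootedness of every real combination, checking separately that the leading coefficient $q(e')p(e)$ is positive so the degree is exactly $d$. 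Both arguments ultimately rest on the same interlacing phenomenon; the paper's version is shorter because it reuses the preceding lemma and a standard structural fact, while yours is more self-contained and makes the mechanism explicit. Either is a complete proof.
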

\begin{proof}
First we note that $(e,0)\in \Lambda_{++}(\tilde{p},(e,e'))$ because $\tilde{p}(e,0) = q(e')p(e) > 0$ 
and $(e,0)\in \Lambda_+(\tilde{p},(e,e'))$. 
If $L\subseteq \RR^{n+n'}$ is the subspace spanned by the first $n$ coordinate directions, then  
$\left.\tilde{p}\right|_{L} = p$ and $(e,0)\in L$. Hence, if $p\notin \SOSHyp_{n,d}(e)$, it follows that 
$\tilde{p}\notin \SOSHyp_{n+n',d}((e,0))$.
\end{proof}

We now construct, when $n\geq 4$ and $d\geq 4$, an explicit hyperbolic polynomial
and direction of hyperbolicity that is not SOS-hyperbolic with respect to that direction.
\begin{theorem}
If $n\geq 4$ and $d\geq 4$ and $\tilde{e} = (0,0,1,1,0,\ldots,0)$, there exists a polynomial 
$p\in \Hyp_{n,d}(\tilde{e})$ that is not SOS-hyperbolic with respect to $\tilde{e}$.
\end{theorem}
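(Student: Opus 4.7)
The plan is to assemble the theorem from the three preceding propositions in a two-step boosting construction, starting from the base case $(n,d) = (4,4)$ provided by Proposition~\ref{prop:svamos}.

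As a starting point, set $p_0 = p_{G_4}$ and $e_0 = (0,0,1,1)$. By Proposition~\ref{prop:svamos}, $p_0 \in \Hyp_{4,4}(e_0)$, the vector $u_0 = (0,0,0,1)$ lies in $\Lambda_+(p_0, e_0)$, and $B_{p_0, e_0}(x)[u_0]$ fails to be a matrix sum of squares, so $p_0 \notin \SOSHyp_{4,4}(e_0)$.

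To increase the degree from $4$ to an arbitrary $d \geq 4$, I apply Proposition~\ref{prop:dv-increase} to $p_0$ with the linear form $\ell(x) = x_3$ and exponent $k = d - 4$. This $\ell$ satisfies $\ell(e_0) = 1 > 0$ and $\ell(u_0) = 0$, so Proposition~\ref{prop:dv-increase} yields $p_1(x) = x_3^{d-4}\, p_0(x) \in \Hyp_{4,d}(e_0)$ with $u_0 \in \Lambda_+(p_1, e_0)$ and $B_{p_1, e_0}(x)[u_0]$ still not a matrix sum of squares; hence $p_1 \notin \SOSHyp_{4,d}(e_0)$. (When $d = 4$ this step is vacuous and $p_1 = p_0$.)

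To increase the number of variables from $4$ to an arbitrary $n \geq 4$, I apply Proposition~\ref{prop:nup}. If $n = 4$, take $p = p_1$ and note that $\tilde{e} = e_0$. If $n > 4$, take $q(x') = x'_1$, regarded as a hyperbolic linear form in $n' = n - 4$ variables with direction $e' = (1, 0, \dots, 0)$, and set
\[ p(x, x') = q(e')\, p_1(x) + q(x')\, D_{e_0} p_1(x) = p_1(x) + x'_1\, D_{e_0} p_1(x). \]
Proposition~\ref{prop:nup} then certifies that $p \in \Hyp_{n,d}((e_0, 0))$ and $p \notin \SOSHyp_{n,d}((e_0, 0))$. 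Since $(e_0, 0) = (0,0,1,1,0,\dots,0) = \tilde{e}$, this $p$ has the required properties. The construction is a direct synthesis of the three propositions, so no substantial obstacle arises; the only care required is the choice $\ell = x_3$, which works because $u_0$ is supported only in the fourth coordinate while $e_0$ has a nonzero third coordinate, so both hypotheses of Proposition~\ref{prop:dv-increase} are satisfied simultaneously and the direction $\tilde{e}$ is preserved throughout.
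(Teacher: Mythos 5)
Your proof is correct and follows essentially the same strategy as the paper: start from $p_{G_4}$ (Proposition~\ref{prop:svamos}), then apply Propositions~\ref{prop:dv-increase} and~\ref{prop:nup} to boost degree and number of variables. The only cosmetic differences are that you apply the two boosting propositions in the opposite order from the paper and choose $q(x') = x'_1$ rather than the sum of the remaining variables; both are immaterial, and your checks of the hypotheses ($\ell(e_0)>0$, $\ell(u_0)=0$, $q(e')>0$) are all valid.
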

\begin{proof}
From Proposition~\ref{prop:svamos}, we know that if $e = (0,0,1,1)$ and $u = (0,0,0,1)$, then 
the specialized V\'amos polynomial $p_{G_4}$ is hyperbolic, but not SOS-hyperbolic, with respect to $e$.
Let $\tilde{e} = (0,0,1,1,0,\ldots,0)$ and $\tilde{u} = (0,0,0,1,0,\ldots,0)$. 

We can then use Propositions~\ref{prop:nup} and~\ref{prop:dv-increase} to construct $p$. 
Explicitly, we let $q(x') = x_5+\cdots + x_n$ and $e' = (1,1,\ldots,1)/(n-4)$ in Proposition~\ref{prop:nup},
and let $\ell(x) = x_3$ in Proposition~\ref{prop:dv-increase} (which satisfies $\ell(\tilde{e}) = 1$ and $\ell(\tilde{u}) = 0$)
to obtain
\[ p(x_1,\ldots,x_n) = x_3^{d-4}(p_{G_4}(x_1,x_2,x_3,x_4) + (x_5+\cdots + x_n)D_{e}p_{G_4}(x_1,x_2,x_3,x_4)).\]
This is hyperbolic with respect to $\tilde{e}$ but is not SOS-hyperbolic with respect to $\tilde{e}$ because $B_{p,\tilde{e}}(x)[\tilde{u}]$ is not a matrix sum of squares.
\end{proof}
We conclude the section with a natural question raised by one of the referees. We know that if $p$ is hyperbolic with respect to $e$, 
then it is hyperbolic with respect to any $e'$ in the associated open hyperbolicity cone.
\begin{question}
Is there a polynomial $p$, together with directions $e$ and $e'$, 
such that $p$ is SOS-hyperbolic with respect to $e$ and $e'$ is in the associated open hyperbolicity cone,   
and yet $p$ is not SOS-hyperbolic with respect to $e'$?
\end{question}
If such a polynomial were to exist, it would also be an example of an SOS-hyperbolic polynomial for which no 
power has a definite determinantal representation.
 
\section{Cubic hyperbolic polynomials}
\label{sec:cubics}
In this section, we focus on hyperbolic cubics. The simple structure of the
discriminant of a univariate cubic means that there is an explicit connection
between hyperbolicity of cubic forms and the maximum value of general cubic
forms on the unit sphere. Using this connection, in Section~\ref{sec:hard} we
show that, given a cubic form $p$ and a direction $e$, both with rational
coefficients,  it is co-NP hard to decide whether $p$ is hyperbolic with
respect to $e$. In Section~\ref{sec:cubic-sos} we construct an explicit cubic
form in $43$ variables and a direction of hyperbolicity $e$, so that $p$ is not
SOS-hyperbolic with respect to $e$. This gives a
hyperbolic cubic for which no power has a definite determinantal
representation. 

\subsection{Hyperbolicity and extreme values of cubics on the sphere}
In this section we focus on cubic polynomials of the form
\begin{equation}
\label{eq:std-cubic} p(x_0,x) = x_0^3 - 3x_0(x_1^2+\cdots + x_n^2) + 2q(x)
\end{equation}
for some $q\in \RR[x_1,\ldots,x_n]_3$, and fix the candidate direction of hyperbolicity to be $e_0$. 

The next result gives a characterization of when a cubic polynomial in the
form~\eqref{eq:std-cubic} is hyperbolic with respect to $e_0$, and a necessary
condition for it to be SOS-hyperbolic with respect to $e_0$.
\begin{proposition}
\label{prop:hyp3test}
If $p(x_0,x) = x_0^3 - 3x_0\|x\|^2 + 2q(x)$, where $q\in \RR[x_1,\ldots,x_{n}]_{3}$, 
then
\begin{align}
\label{eq:std-max} 
p\in \Hyp_{n+1,3}(e_0) \;\;& \iff\;\;\max_{\|x\|^2 =1}|q(x)| \leq 1\\
\;\;& \iff\;\; \|x\|^4 - 2zq(x) + z^2\|x\|^2 \geq 0\;\;\textup{for all $(x,z)\in \RR^{n}\times \RR$}.
\end{align}
Moreover, if $p\in \SOSHyp_{n+1,3}(e_0)$, then $\|x\|^4 - 2zq(x) + z^2\|x\|^2$ is a sum of squares.
\end{proposition}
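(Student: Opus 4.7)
The plan is to reduce everything to the codimension-one slice $\{x_0 = 0\}$ and then exploit the simple structure of the depressed cubic. By Corollary~\ref{cor:npt2} and Proposition~\ref{prop:sos-equiv}, both the hyperbolicity characterization and the SOS claim can be tested on the subspace $W = \{x_0 = 0\}$. On $W$ the polynomial $p_x(t) = p(x+te_0)$ simplifies to the depressed cubic $t^3 - 3\|x\|^2 t + 2q(x)$ in $t$ (now with $x = (x_1,\ldots,x_n) \in \RR^n$), so $p \in \Hyp_{n+1,3}(e_0)$ precisely when this cubic in $t$ has only real roots for every $x \in \RR^n$.

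For the first two equivalences the plan is a discriminant calculation. The discriminant of $t^3 + Pt + Q$ is $-4P^3 - 27Q^2$, which here evaluates to $108(\|x\|^6 - q(x)^2)$; real-rootedness for all $x$ is therefore equivalent to $q(x)^2 \leq \|x\|^6$ for all $x$, and by homogeneity of $q$ to $\max_{\|x\|=1}|q(x)| \leq 1$. For the second equivalence, for fixed $x \neq 0$ the expression $\|x\|^4 - 2zq(x) + z^2\|x\|^2$ is a quadratic in $z$ with positive leading coefficient $\|x\|^2$ and discriminant $4(q(x)^2 - \|x\|^6)$, so it is nonnegative for all $z$ exactly under the same condition (the case $x = 0$ is trivial).

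For the SOS claim, since $D_{e_0}p_x(t) = p_x'(t)$, the entries of $H_{p,e_0}((0,x))[e_0]$ are precisely the power sums of the roots of the depressed cubic. Newton's identities applied with elementary symmetric functions $e_1 = 0$, $e_2 = -3\|x\|^2$, $e_3 = -2q(x)$ yield
\[ H_{p,e_0}((0,x))[e_0] = \begin{pmatrix} 3 & 0 & 6\|x\|^2 \\ 0 & 6\|x\|^2 & -6q(x) \\ 6\|x\|^2 & -6q(x) & 18\|x\|^4 \end{pmatrix}, \]
and completing the square in $y_1$ gives the factorization
\[ y^T H_{p,e_0}((0,x))[e_0] y = 3(y_1 + 2\|x\|^2 y_3)^2 + 6\bigl(\|x\|^2 y_2^2 - 2q(x) y_2 y_3 + \|x\|^4 y_3^2\bigr). \]

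If $p \in \SOSHyp_{n+1,3}(e_0)$, then by Proposition~\ref{prop:sos-equiv} the left-hand side is a sum of squares in $(x,y_1,y_2,y_3)$. Substituting the polynomials $y_1 = -2\|x\|^2 z$, $y_2 = z$, $y_3 = 1$ is a polynomial substitution and hence preserves the SOS property; it annihilates the first squared term, so the remaining $6(\|x\|^4 - 2zq(x) + z^2\|x\|^2)$ is a sum of squares, and the conclusion follows after dividing by the positive constant. The only really computational step is the explicit evaluation of the Hermite matrix and the factorization above; once those are in hand the rest is essentially formal.
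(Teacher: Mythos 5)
Your argument is correct in its essentials and runs closely parallel to the paper's: you work with the Hermite matrix where the paper uses the Bézoutian $B_{p,e_0}(0,x)[e_0]$, and your discriminant computation for the first two equivalences is a clean, more elementary alternative to the paper's Schur-complement reduction to the $2\times 2$ matrix $\left[\begin{smallmatrix}\|x\|^4 & -q(x)\\ -q(x) & \|x\|^2\end{smallmatrix}\right]$. The Hermite-matrix computation via power sums is correct, as is the completion of the square.

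The one slip is in the final substitution. With $y_3 = 1$, the choice that annihilates the squared term in
\[
y^T H_{p,e_0}((0,x))[e_0]\, y = 3(y_1 + 2\|x\|^2 y_3)^2 + 6\left(\|x\|^2 y_2^2 - 2q(x) y_2 y_3 + \|x\|^4 y_3^2\right)
\]
is $y_1 = -2\|x\|^2$, not $y_1 = -2\|x\|^2 z$. As you wrote it, $y_1 + 2\|x\|^2 y_3 = 2\|x\|^2(1-z)$ does not vanish, so you would conclude only that $12\|x\|^4(1-z)^2 + 6(\|x\|^4 - 2zq(x) + z^2\|x\|^2)$ is a sum of squares, which does not isolate the desired summand. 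With the substitution corrected to $(y_1,y_2,y_3) = (-2\|x\|^2, z, 1)$, the first square vanishes identically and the remainder is exactly $6(\|x\|^4 - 2zq(x) + z^2\|x\|^2)$, completing the proof. For comparison, the paper achieves the same end by applying the vector $(1, z, \|x\|^2)$ to the Bézoutian; the two substitutions are related by the unimodular congruence linking the Bézoutian and Hermite matrices.
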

\begin{proof}
Let $W = \{x\in \RR^{n+1}: x_0 = 0\}$ and note that $e_0\notin W$. By Corollary~\ref{cor:npt2}, 
 $p$ is hyperbolic with respect to $e_0$ if, and only if, $B_{p,e_0}(0,x)[e_0] \psd 0$ for all $x\in \RR^{n}$. 
An explicit computation shows that 
\[ B_{p,e_0}(0,x)[e_0] = \begin{bmatrix} 9\|x\|^4 & -6q(x) & -3\|x\|^2\\-6q(x) & 6\|x\|^2 & 0\\-3\|x\|^2 & 0 & 3\end{bmatrix}.\]
By taking a Schur complement and dividing by $6$, 
\begin{equation}
	\label{eq:mat-B} B_{p,e_0}(0,x)[e_0] \psd 0\;\;\iff\;\;\begin{bmatrix} \|x\|^4 & -q(x)\\-q(x) & \|x\|^2\end{bmatrix} \psd 0 \;\;\iff\;\;\max_{\|x\|^2=1}|q(x)|\leq 1.
\end{equation}
Similarly~\eqref{eq:mat-B} holds for all $x\in \RR^n$ if, and only if, 
\[ \begin{matrix}\begin{bmatrix} 1 & z\end{bmatrix}\\\phantom{\begin{bmatrix} 1 & z\end{bmatrix}}\end{matrix} \begin{bmatrix} \|x\|^4 & -q(x)\\-q(x) & \|x\|^2\end{bmatrix}\begin{bmatrix}1\\z\end{bmatrix}
= \|x\|^4 - 2zq(x) + z^2\|x\|^2\geq 0\]
for all $(x,z)\in \RR^{n}\times \RR$.
If $p\in \SOSHyp_{n+1,3}(e_0)$, then $B_{p,e_0}(0,x)[e_0]$ is a matrix sum of squares, and 
\[ \begin{matrix} \begin{bmatrix}1& z & \|x\|^2\end{bmatrix}\\
	\phantom{\begin{bmatrix}1& z & \|x\|^2\end{bmatrix}}\\
	\phantom{\begin{bmatrix}1& z & \|x\|^2\end{bmatrix}}\end{matrix}\begin{bmatrix} 9\|x\|^4 & -6q(x) & -3\|x\|^2\\-6q(x) & 6\|x\|^2 & 0\\-3\|x\|^2 & 0 & 3\end{bmatrix}\begin{bmatrix} 1\\z\\\|x\|^2\end{bmatrix} = 6(\|x\|^4  -2zq(x)+z^2\|x\|^2)\]
is a sum of squares.
\end{proof}
This connection between (SOS-)hyperbolicity and (sum of squares relaxations of) the extreme values of cubic forms on the unit sphere, 
is central to the remaining discussion in this section.

\subsection{Hardness of testing hyperbolicity of cubics}
\label{sec:hard}
We have seen that for cubics in the form~\eqref{eq:std-cubic}, to test hyperbolicity we need to be able to 
compute the extreme values of a cubic form on the unit sphere. This connection will allow us to establish
NP-hardness of deciding hyperbolicity of cubic forms.
We begin with a result of Nesterov~\cite[Theorem 4]{nesterov2003random} stating that we can 
compute the size of the largest clique in a graph by
maximizing an associated cubic form over the unit sphere.
\begin{theorem}[Nesterov]
\label{thm:nesterov}
Let $\cG = (\cV,\cE)$ be a simple graph (i.e., without loops and multiple edges) 
and let $\omega(\cG)$ be the size of a largest clique in $\cG$. Define
\begin{equation}
\label{eq:qG}
	 q_{\cG}(x,y) = \sum_{(i,j)\in \cE} x_ix_jy_{ij}.
\end{equation}
Then 
\[ \max_{\|x\|^2+\|y\|^2=1} q_\cG(x,y) = \sqrt{\frac{2}{27}} \sqrt{1-\frac{1}{\omega(\cG)}}.\]
Moreover, if $\cC\subset \cV$ is a clique with $|\cC|=\omega(\cG)$, then $q_{\cG}$ is maximized whenever 
\[ x_{i}^2 = \begin{cases} \frac{2}{3\omega(\cG)} & \textup{if $i\in \cC$}\\0 & \textup{otherwise}\end{cases}
\quad\textup{and}\quad y_{ij}^2 = \begin{cases}\frac{1}{3\binom{\omega(\cG)}{2}} & \textup{if $i,j\in \cC$}\\0 & \textup{otherwise}
	\end{cases}\]
and $\textup{sign}(y_{ij}) = \textup{sign}(x_i)\textup{sign}(x_j)$.
\end{theorem}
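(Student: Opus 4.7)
The plan is to reduce the optimization to the classical Motzkin--Straus theorem via two successive inner maximizations, and then verify the explicit maximizer by substitution.

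First, I would fix $x\in \RR^{|\cV|}$ with $\|x\|^2 = s \in [0,1]$ and maximize $q_{\cG}(x,y)$ over $y$ subject to $\|y\|^2 = 1-s$. Since $q_{\cG}$ is linear in $y$ with coefficients $x_ix_j$ on the edge $(i,j)$, the Cauchy--Schwarz inequality gives the exact inner value
\[
\max_{\|y\|^2 = 1-s} q_{\cG}(x,y) \;=\; \sqrt{1-s}\,\Bigl(\sum_{(i,j)\in \cE} x_i^2 x_j^2\Bigr)^{1/2},
\]
attained when $y_{ij}$ is proportional to $x_ix_j$. Note that this choice automatically forces $\sign(y_{ij}) = \sign(x_i)\sign(x_j)$, matching the last part of the theorem statement.

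Second, I substitute $z_i = x_i^2$, so that $z \geq 0$ and $\sum_i z_i = s$. Then $\sum_{(i,j)\in \cE} x_i^2 x_j^2 = \sum_{(i,j)\in \cE} z_i z_j$, and by the homogeneity $z = s w$ this equals $s^2 \max_{w\in \Delta}\sum_{(i,j)\in \cE} w_iw_j$, where $\Delta$ is the probability simplex on $\cV$. The Motzkin--Straus theorem states that this maximum equals $\tfrac{1}{2}(1 - 1/\omega(\cG))$, attained by the uniform distribution on any maximum clique $\cC$. Combining with the previous step,
\[
\max_{\|x\|^2+\|y\|^2 = 1} q_{\cG}(x,y) \;=\; \max_{s\in[0,1]} \tfrac{1}{\sqrt{2}}\, s\sqrt{1-s}\,\sqrt{1 - 1/\omega(\cG)}.
\]
A one-variable calculus computation yields $\max_{s\in[0,1]} s\sqrt{1-s} = 2/(3\sqrt{3}) = 2/\sqrt{27}$ at $s = 2/3$, producing the upper bound $\sqrt{2/27}\sqrt{1-1/\omega(\cG)}$.

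Third, I would verify that the explicit $(x,y)$ described in the theorem achieves this bound. The prescribed $x$ is supported on a maximum clique $\cC$ with $x_i^2 = 2/(3\omega(\cG))$, giving $\|x\|^2 = 2/3$; the prescribed $y$ is supported on edges internal to $\cC$ with $y_{ij}^2 = 1/(3\binom{\omega(\cG)}{2})$, giving $\|y\|^2 = 1/3$. With the stated sign convention, each of the $\binom{\omega(\cG)}{2}$ nonzero terms $x_ix_jy_{ij}$ is positive and equal in magnitude, and arithmetic shows their sum equals $\sqrt{2/27}\sqrt{1-1/\omega(\cG)}$. The only conceptually nontrivial input is the Motzkin--Straus theorem; the remaining steps (the Cauchy--Schwarz reduction, the scalar calculus optimization, and the verification arithmetic) are routine, so invoking Motzkin--Straus correctly is the main step of the argument.
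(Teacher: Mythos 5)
Your proof is correct and follows essentially the same route the paper indicates: the paper attributes the result to Nesterov and remarks that he obtained it by reformulating the Motzkin--Straus formula; the paper itself gives no proof. You have filled in the details of that reduction: Cauchy--Schwarz to eliminate $y$ (with $\sqrt{1-s}\bigl(\sum_{(i,j)\in\cE}x_i^2x_j^2\bigr)^{1/2}$ attained when $y_{ij}\propto x_ix_j$, which already yields the stated sign pattern), the substitution $z_i=x_i^2$ and homogeneity to land on the Motzkin--Straus problem $\max_{w\in\Delta}\sum_{\{i,j\}\in\cE}w_iw_j = \tfrac12(1-1/\omega(\cG))$, then the one-variable optimization $\max_{s\in[0,1]} s\sqrt{1-s} = 2/\sqrt{27}$ at $s=2/3$, and finally the arithmetic check that the prescribed clique-supported $(x,y)$ attains the bound. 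All steps and constants check out (in particular $\binom{\omega}{2}\cdot\tfrac{2}{3\omega}\cdot\bigl(3\binom{\omega}{2}\bigr)^{-1/2} = \sqrt{2/27}\,\sqrt{1-1/\omega}$). The only small omission is the trivial edge case $\omega(\cG)=1$, where $\cE=\varnothing$ and both sides are $0$; this does not affect the argument.
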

Nesterov obtained this by reformulating Motzkin and Straus' celebrated formula for the maximum size of a clique in a graph
in terms of the maximum value of a certain quadratic form over the unit simplex~\cite{motzkin1965maxima}. 
Combining Nesterov's result and Proposition~\ref{prop:hyp3test} allows us to construct a family of hyperbolic cubic
forms from any simple graph.
\begin{proposition}
	Given a simple graph $\cG = (\cV,\cE)$ and an integer $k\geq 2$, define a cubic form in $|\cV|+|\cE|+1$ variables by 
\[ p_{\cG,k}(x_0,x,y) = \frac{2k}{k-1}x_0^3 - x_0(\|x\|^2+\|y\|^2) + q_{\cG}(x,y).\]
Then $p_{\cG,k}$ is hyperbolic with respect to $e_0$ if, and only if, $\omega(\cG) \leq k$.
\end{proposition}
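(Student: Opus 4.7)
The plan is to put $p_{\cG,k}$ into the standard cubic form of Proposition~\ref{prop:hyp3test} by a simple change of scale on $x_0$, then apply Nesterov's theorem (Theorem~\ref{thm:nesterov}) to translate the resulting inequality about extreme values of $q_{\cG}$ on the sphere into the desired clique-number inequality.

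Concretely, I would first make the substitution $x_0 = \beta u_0$ with $\beta = \sqrt{(k-1)/(6k)}$ and multiply the resulting polynomial by $3/\beta > 0$. Since hyperbolicity with respect to $e_0$ is invariant under positive scaling and under rescaling of the $x_0$ coordinate, the polynomial $p_{\cG,k}$ is hyperbolic with respect to $e_0$ if and only if
\[ \tilde{p}(u_0,x,y) := u_0^3 - 3u_0(\|x\|^2 + \|y\|^2) + 2\tilde{q}_{\cG}(x,y) \]
is hyperbolic with respect to $e_0$, where $\tilde{q}_{\cG}(x,y) = \frac{3}{2\beta}q_{\cG}(x,y)$. (A small calculation confirms that the coefficient of $u_0^3$ becomes $1$ and that of $u_0(\|x\|^2+\|y\|^2)$ becomes $-3$.)

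Next, I would invoke Proposition~\ref{prop:hyp3test} to deduce that $\tilde{p}$ is hyperbolic with respect to $e_0$ if and only if $\max_{\|x\|^2+\|y\|^2 = 1}|\tilde{q}_{\cG}(x,y)| \leq 1$, equivalently $\max_{\|x\|^2+\|y\|^2=1}|q_{\cG}(x,y)| \leq 2\beta/3$. Because $q_{\cG}(x,y) = \sum_{\{i,j\} \in \cE} x_ix_jy_{ij}$ is odd in $y$, flipping the signs of the $y_{ij}$ shows that $\max|q_{\cG}| = \max q_{\cG}$ on the sphere, so Theorem~\ref{thm:nesterov} gives
\[ \max_{\|x\|^2+\|y\|^2=1}|q_{\cG}(x,y)| = \sqrt{\tfrac{2}{27}}\sqrt{1-\tfrac{1}{\omega(\cG)}}. \]

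Finally, I would square both sides of the resulting inequality $\sqrt{\frac{2}{27}}\sqrt{1 - 1/\omega(\cG)} \leq 2\beta/3$ and substitute $\beta^2 = (k-1)/(6k)$. A direct simplification reduces this to $1 - 1/\omega(\cG) \leq 1 - 1/k$, i.e.\ $\omega(\cG) \leq k$. No step looks technically hard here; the only place to be careful is the bookkeeping of the rescaling constants so that the right-hand side of the Nesterov bound and the threshold $2\beta/3$ line up cleanly, and the observation that $|q_{\cG}|$ and $q_{\cG}$ have the same maximum on the sphere (which avoids treating the two signs of the inequality separately).
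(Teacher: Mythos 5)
Your proof is correct and follows essentially the same route as the paper: rescale $x_0$ to put the cubic in the standard form of Proposition~\ref{prop:hyp3test}, then apply Theorem~\ref{thm:nesterov}. The one small point you make explicit that the paper leaves implicit is that $q_{\cG}$ is odd in $y$, so $\max|q_{\cG}|=\max q_{\cG}$ on the sphere, which is exactly what is needed to connect the absolute-value condition in Proposition~\ref{prop:hyp3test} with Nesterov's one-sided maximum.
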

\begin{proof}
We make the change of variables $\tilde{x}_0 = \sqrt{\frac{6k}{k-1}}x_0$ to obtain
\[ \tilde{p}_{\cG,k}(\tilde{x}_0,x,y) =  \frac{1}{\sqrt{27}}\sqrt{\frac{k-1}{2k}}\left[\tilde{x}_0^3 - 3\tilde{x}_0(\|x\|^2 + \|y\|^2) +2\left(\frac{q_{\cG}(x,y)}{\sqrt{\frac{2}{27}}\sqrt{1-\frac{1}{k}}}\right)\right]\]
which is hyperbolic with respect to $e_0$ if, and only if, $p_{\cG,k}$ is hyperbolic with respect to $e_0$. 
The result then follows immediately from Proposition~\ref{prop:hyp3test} and Theorem~\ref{thm:nesterov}.
\end{proof}
Given a positive integer $k$ and a simple graph $\cG = (\cV,\cE)$, the problem of deciding whether $\omega(\cG) \geq k+1$ is 
NP-hard~\cite{karp1972reducibility}. This immediately gives the following hardness result.
\begin{theorem}
\label{thm:np}
Given a homogeneous cubic polynomial $p$ and a direction $e$, both with rational coefficients, 
the decision problem ``Is $p$ hyperbolic with respect to $e$?'' is co-NP hard.
\end{theorem}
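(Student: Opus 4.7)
The plan is to present this as a straightforward polynomial-time reduction from the complement of the clique decision problem (co-Clique) to the hyperbolicity decision problem, leveraging the one-parameter family $p_{\cG,k}$ already constructed in the preceding proposition.

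First I would recall the standard NP-hardness of Clique from Karp: given a simple graph $\cG$ and a positive integer $k$, deciding whether $\omega(\cG) \geq k+1$ is NP-hard, so deciding whether $\omega(\cG) \leq k$ is co-NP-hard. The goal is then to reduce this latter problem in polynomial time to the question of hyperbolicity of a cubic with rational coefficients.

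Given an instance $(\cG,k)$ with $k\geq 2$, I would take as output the pair $(p_{\cG,k}, e_0)$, where $p_{\cG,k}(x_0,x,y)$ is the cubic defined in the preceding proposition. I would verify three things: (i) the coefficients of $p_{\cG,k}$ are rational, since they consist of $2k/(k-1)$, $-1$, and $1$; (ii) the number of variables is $|\cV|+|\cE|+1$ and the number of monomials is at most a polynomial in $|\cV|+|\cE|$, so the encoding of $p_{\cG,k}$ has size polynomial in the encoding of $(\cG,k)$; and (iii) $e_0$ clearly has rational coordinates. By the preceding proposition, $p_{\cG,k}\in \Hyp_{|\cV|+|\cE|+1,3}(e_0)$ if and only if $\omega(\cG) \leq k$, so the reduction is correct. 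The edge case $k=1$ can be handled trivially since $\omega(\cG)\geq 2$ holds iff $\cE\neq \emptyset$, which is decidable in linear time; equivalently one may simply assume $k\geq 2$ in the reduction.

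There is no real obstacle here: all the technical work has already been done in establishing the Nesterov-type identity (Theorem \ref{thm:nesterov}) and in using Proposition \ref{prop:hyp3test} to translate extremal values on the sphere into hyperbolicity. The only point worth being careful about is ensuring that the coefficient $2k/(k-1)$ and the change of variables $\tilde{x}_0 = \sqrt{6k/(k-1)}\,x_0$ used in the preceding proof preserve rationality of the input polynomial; since we hand off $p_{\cG,k}$ itself (not the rescaled $\tilde{p}_{\cG,k}$) as the instance of the hyperbolicity problem, this is automatic. The proof then concludes in one sentence: since co-Clique is co-NP-hard and reduces in polynomial time to deciding hyperbolicity of a rational cubic with respect to a rational direction, the latter is co-NP-hard.
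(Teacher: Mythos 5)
Your proof is correct and takes exactly the same route as the paper: the paper's "proof" is just the sentence preceding the theorem, which invokes the $p_{\cG,k}$ reduction and Karp's NP-hardness of Clique, leaving the rationality, encoding-size, and $k\geq 2$ details implicit. You have simply spelled these out, and your observation that rationality is preserved because the reduction hands off $p_{\cG,k}$ rather than the rescaled $\tilde p_{\cG,k}$ is a worthwhile clarification.
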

This result is strong evidence that hyperbolic cubics that are not SOS-hyperbolic should exist. Indeed, 
if all hyperbolic cubics were SOS-hyperbolic, we could check hyperbolicity of cubics (and so maximize cubic forms over 
the unit sphere) by solving semidefinite programs of size polynomial in the input. 

\subsection{A cubic that is hyperbolic but not SOS-hyperbolic}
\label{sec:cubic-sos}
In this section we construct an explicit hyperbolic cubic that is not SOS-hyperbolic. 
In light of Proposition~\ref{prop:hyp3test}, the basic approach will be to construct a 
cubic polynomial $q$ for which $\|x\|^4 - 2zq(x) +z^2\|x\|^2$ is nonnegative 
but not a sum of squares. Drawing on the results of the previous section, 
we consider graphs $\cG$ with maximum clique of size $\omega(\cG)$ 
and polynomials of the form
\begin{equation}
\label{eq:pG} p_{\cG,\omega(\cG)}(x_0,x,y) = x_0^3 - 3x_0(\|x\|^2+\|y\|^2) + 2\left(\frac{q_{\cG}(x,y)}{\sqrt{\frac{2}{27}}\sqrt{1-\frac{1}{\omega(\cG)}}}\right).
\end{equation}
By Proposition~\ref{prop:hyp3test}, if $p_{\cG,\omega(\cG)}$ is SOS-hyperbolic with respect to $e_0$, then 
\[ r_{\cG}(x,y,z) := (\|x\|^2+\|y\|^2)^2 - 2\left(\frac{q_{\cG}(x,y)}{\sqrt{\frac{2}{27}}\sqrt{1-\frac{1}{\omega(\cG)}}}\right) + z^2(\|x\|^2+\|y\|^2)\]
is a sum of squares. 

The following result tells us that, if $r_{\cG}$ is a sum of squares, then there
exists a $(|\cV|+|\cE|)\times(|\cV|+|\cE|)$ correlation matrix (positive
semidefinite matrix with unit diagonal) with nullspace containing a set of
vectors $(x_C,y_C)$ corresponding to each of the maximum cliques of the graph.
(A scaled version of one such vector, for a particular graph, is illustrated in Figure~\ref{fig:basesa}.)
\begin{proposition}
\label{prop:elliptope}
Let $\cG=(\cV,\cE)$ be a simple graph with maximum clique size $\omega(\cG)$. 
For each maximum clique $C$ of $\cG$, let 
\[ \left(x_C,y_C\right)
= \left(\sum_{v\in \cV(C)} \frac{2}{3\omega(\cG)}e_v, \sum_{\{v,w\}\in \cE(C)}\frac{1}{3\binom{\omega(\cG)}{2}}e_{\{v,w\}}\right)\in \RR^{\cV}\times \RR^{\cE}.\]
If $r_{\cG}$ is a sum of squares, then there exists a $(|\cV|+|\cE|)\times (|\cV|+|\cE|)$ positive semidefinite matrix $X$
such that $X_{ii} = 1$ for all $i=1,2,\ldots,|\cV|+|\cE|$ and $\left(x_C, y_C\right)$ is in the nullspace of $X$
for all maximum cliques $C$ of $\cG$. 
\end{proposition}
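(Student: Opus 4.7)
The plan is to read off $X$ directly from the coefficients of an SOS decomposition of $r_\cG$. Since $r_\cG$ is homogeneous of degree four in $(x,y,z)$, we may assume $r_\cG = \sum_k s_k^2$ with each $s_k$ homogeneous of degree two. Matching the coefficient of $z^4$ (which vanishes in $r_\cG$) forces the $z^2$-component of each $s_k$ to be zero, so
\[
 s_k(x,y,z) = A_k(w) + z\, B_k(w), \qquad w := (x,y),
\]
where $A_k(w) = w^T Q_k w$ is a quadratic form and $B_k$ is a linear form in $w$.

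The key step is to substitute the Nesterov maximizers. By Theorem~\ref{thm:nesterov}, for each maximum clique $C$ and each sign vector $\sigma \in \{\pm 1\}^{\cV(C)}$, the vector $w^*_\sigma$ whose entries are $(w^*_\sigma)_v = \sigma_v\sqrt{2/(3\omega(\cG))}$ for $v \in \cV(C)$, $(w^*_\sigma)_{\{v,w\}} = \sigma_v\sigma_w\sqrt{1/(3\binom{\omega(\cG)}{2})}$ for $\{v,w\}\in \cE(C)$, and zero elsewhere, maximizes $q_\cG$ on the unit sphere. In particular $\|w^*_\sigma\|^2 = 1$ and $q_\cG(w^*_\sigma)/(\sqrt{2/27}\sqrt{1-1/\omega(\cG)}) = 1$. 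Substituting $(w,z)=(w^*_\sigma,1)$ into the SOS identity yields $r_\cG(w^*_\sigma, 1) = 1 - 2 + 1 = 0$, and therefore $A_k(w^*_\sigma) + B_k(w^*_\sigma) = s_k(w^*_\sigma, 1) = 0$ for every $k$ and every $\sigma$.

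I next average this identity uniformly over $\sigma \in \{\pm 1\}^{\cV(C)}$. A short parity computation, split according to whether an index is a vertex of $C$, an edge of $C$, or neither, shows that
\[
 \frac{1}{2^{|\cV(C)|}}\sum_\sigma (w^*_\sigma)_i = 0 \qquad \text{and} \qquad \frac{1}{2^{|\cV(C)|}}\sum_\sigma (w^*_\sigma)_i (w^*_\sigma)_j = \delta_{ij}\, (x_C, y_C)_i.
\]
The first identity kills $B_k$ upon averaging; the second collapses $A_k$ to only its diagonal contribution. Combining, $\sum_i (Q_k)_{ii}(x_C, y_C)_i = 0$ for every $k$ and every maximum clique $C$.

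Finally, I define $X := \sum_k q_k q_k^T$, where $q_k := \diag(Q_k) \in \RR^{|\cV|+|\cE|}$. Then $X$ is manifestly positive semidefinite. Matching the coefficient of $w_i^4$ in the identity $\sum_k A_k(w)^2 = \|w\|^4$ (the $z^0$-component of $r_\cG$) gives $X_{ii} = \sum_k (Q_k)_{ii}^2 = 1$, so $X$ has unit diagonal. And $(x_C, y_C)^T X (x_C, y_C) = \sum_k \langle q_k, (x_C, y_C)\rangle^2 = 0$, placing $(x_C,y_C)$ in the nullspace of $X$ as required. The main technical obstacle is the sign-averaging computation; once the diagonal monomials $\{w_i^2\}_i$ are identified as the correct source of the Gram matrix $X$, the remaining steps are routine coefficient matching.
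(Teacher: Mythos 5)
Your proof is correct and takes essentially the same approach as the paper: the missing $z^4$ term restricts the shape of each square, the Nesterov maximizers supply the vanishing, and a $\ZZ_2$ sign-average isolates the Gram block indexed by the squared monomials $\{x_i^2\}\cup\{y_{ij}^2\}$. The paper performs the averaging at the level of the Gram matrix (using a $\ZZ_2^{|\cV|+1}$ action to force the cross-block $G_{01}=0$ before evaluating at a single maximizer) rather than averaging the evaluations over the sign orbit, but both routes produce the same matrix $X = G_{00} = \sum_k \diag(Q_k)\diag(Q_k)^T$.
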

\begin{proof}
Let $m_0(x,y) = \begin{bmatrix} (x_i^2)_{i\in \cV}& (y^2_{ij})_{\{i,j\}\in \cE}\end{bmatrix}^T$ and
\[ m_1(x,y,z) = \begin{bmatrix}(zx_i)_{i\in \cV}&(zy_{ij})_{\{i,j\}\in \cE}&(x_ix_j)_{i,j\in \cV}&(y_{ij}y_{k\ell})_{\{i,j\},\{k,\ell\}\in \cE}&(x_iy_{jk})_{i\in \cV,\{j,k\}\in \cE}\end{bmatrix}^T.\]
If $r_{\cG}$ were a sum of squares, then there would exist  
$G = \left[\begin{smallmatrix}G_{00} & G_{01}\\G_{01}^T & G_{11}\end{smallmatrix}\right] \psd 0$ such that
\[ r_{\cG}(x,y,z) = m_0(x,y)^TG_{00}m_0(x,y) + 2m_{0}(x,y)^TG_{01}m_1(x,y,z) + m_1(x,y,z)^TG_{11}m_1(x,y,z).\]
This holds because there is no term of the form $z^4$ in $r_{\cG}$ and so the monomial $z^2$ has zero coefficient in 
any polynomial appearing in a sum of squares
decomposition. By comparing coefficients of $x_i^4$ and $y_{ij}^4$, we see that $[G_{00}]_{ii} = 1$ for $i=1,2,\ldots,|\cV|+|\cE|$.

The group $\ZZ_{2}^{|\cV|+1}$ acts on $\RR^{|\cV|+|\cE|+1}$ by 
\[ (\epsilon_0,\epsilon_1,\ldots,\epsilon_{|\cV|}) \cdot ((x_i)_{i\in \cV},(y_{ij})_{\{i,j\}\in \cE},z) = 
((\epsilon_i x_i)_{i\in \cV},(\epsilon_0\epsilon_i\epsilon_j y_{ij})_{\{i,j\}\in \cE}, \epsilon_0 z)\]
where $\epsilon_i\in \{-1,1\}$ for $i=0,1,\ldots,|\cV|$. 
This induces an action on polynomials which leaves $r_{\cG}(x,y,z)$ invariant.
Averaging over this action, we can see that if $r_{\cG}$ is a sum of squares, then it must have a Gram matrix with $G_{01} = 0$. 
This is because every monomial in $m_0$ is fixed by the action, and no monomial in $m_1$ is fixed by the action. 
As such, there must be positive semidefinite matrices
$G_{00}$ and $G_{11}$ such that 
\[ r_{\cG}(x,y,z) = m_0(x,y)^TG_{00}m_0(x,y) + m_1(x,y,z)^TG_{11}m_1(x,y,z)\]
and $[G_{00}]_{ii} = 1$ for all $i=1,2,\ldots,|\cV|+|\cE|$. 
From Nesterov's result (Theorem~\ref{thm:nesterov}), one can check that
if $C$ is a maximum clique, then $r_{\cG}(x,y,z)=0$ whenever $z=1$ and $m_0(x,y)^T = \left[\begin{smallmatrix}x_C\\ y_C\end{smallmatrix}\right]$.
It follows that 
$\left[\begin{smallmatrix} x_C\\ y_C\end{smallmatrix}\right]^T G_{00}\left[\begin{smallmatrix} x_C\\y_C\end{smallmatrix}\right] = 0$ for all maximum cliques $C$. Since $G_{00}\psd 0$ it follows that, in fact, 
$G_{00}\left[\begin{smallmatrix} x_C \\ y_C\end{smallmatrix}\right] = 0$ for all maximum cliques $C$. Taking $X =G_{00}$ completes the proof. 
\end{proof}
We now focus on the case in which $\cG$ is the icosahedral graph. This graph has $12$ vertices, $30$ edges, and
$20$ triangles, all of which are maximum cliques. As such, the
polynomial 
\begin{equation}
	\label{eq:pG3}
	p_{\cG,3}(x_0,x,y) = x_0^3 - 3x_0(\|x\|^2+\|y\|^2) + 9q_{\cG}(x,y)
\end{equation}
has  $12+30+1=43$ variables and is hyperbolic with respect to $e_0$. In the
proof of Theorem~\ref{thm:icos3}, we call a pair of vertices of the icosahedral
graph \emph{antipodal} if they are at distance three in the graph. The twelve
vertices can be partitioned into six pairs of antipodal vertices.
\begin{theorem}
\label{thm:icos3}
If $\cG=(\cV,\cE)$ is the icosahedral graph, then $p_{\cG,3}$, defined in~\eqref{eq:pG3},
is hyperbolic, but not SOS-hyperbolic, with respect to $e_0$.
\end{theorem}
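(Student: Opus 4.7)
Hyperbolicity of $p_{\cG,3}$ with respect to $e_0$ is immediate from the previous proposition: the icosahedral graph has clique number exactly $3$ (its maximum cliques being the $20$ triangular faces), so the condition $\omega(\cG) \leq k = 3$ is satisfied.

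For the non-SOS-hyperbolicity claim, my plan is to argue by contradiction using Proposition~\ref{prop:elliptope}. Suppose $p_{\cG,3}$ were SOS-hyperbolic with respect to $e_0$. By Proposition~\ref{prop:hyp3test}, this forces the quartic form
\[ r_{\cG}(x,y,z) = (\|x\|^2+\|y\|^2)^2 - 9z\, q_{\cG}(x,y) + z^2(\|x\|^2+\|y\|^2)\]
to be a sum of squares. Proposition~\ref{prop:elliptope} then guarantees a $42\times 42$ correlation matrix $X$ (positive semidefinite, unit diagonal) whose nullspace contains the $20$ vectors $(x_C, y_C)$, one for each triangular clique $C$ of $\cG$. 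The task is to show that no such $X$ can exist.

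My strategy is to exploit the symmetry of the icosahedral graph. The feasibility question for $X$ is invariant under the action of the full icosahedral automorphism group $\Gamma := \textup{Aut}(\cG)$ (of order $120$) on the $42$ coordinates, so if any solution exists then, by averaging, we may assume $X$ is $\Gamma$-invariant. Since $\Gamma$ acts transitively on vertices, on edges, and on triangles, and is distance-transitive on vertices, the entries of a $\Gamma$-invariant $X$ in the vertex-vertex, edge-edge, and vertex-edge blocks are each determined by a small finite list of orbit parameters. The unit-diagonal condition fixes the two diagonal parameters to $1$, and under the same symmetry the $20$ kernel equations $X(x_C,y_C)=0$ collapse to only a handful of distinct linear constraints among the remaining parameters. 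This reduces the problem to a small, explicit linear matrix inequality feasibility question.

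The main obstacle is then to certify infeasibility of this small LMI. I would do this in the spirit of the dual-certificate argument used in the proof of Proposition~\ref{prop:svamos}: one first block-diagonalizes the $\Gamma$-invariant matrices via the isotypic decomposition of $\Gamma$'s action on $\RR^{\cV}\oplus\RR^{\cE}$, reducing $X$ to a collection of small blocks, and then one identifies a specific isotypic component on which the kernel and diagonal constraints are jointly incompatible with positive semidefiniteness. A natural place to search for the obstruction is the block on which the antipodal involution of the icosahedron acts nontrivially, since the clique vectors have a distinguished sign pattern under antipodal interchange and the six antipodal pairs of vertices split the vertex representation into $\pm 1$ eigenspaces. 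If necessary, the obstructing block can first be identified numerically and then verified by exhibiting a rational dual matrix $Y\succeq 0$ that annihilates all $20$ clique vectors but pairs negatively with the unit-diagonal requirement, exactly as was done for the specialized V\'amos polynomial in Proposition~\ref{prop:svamos}.
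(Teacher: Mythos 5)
Your proof of hyperbolicity is correct and matches the paper. For the non-SOS-hyperbolicity part, your strategy of invoking Proposition~\ref{prop:elliptope} and certifying infeasibility via SDP duality is also the paper's strategy, but your write-up is a plan rather than a proof: the decisive work --- exhibiting the orthogonal complement and the dual certificate --- is not carried out, and one detail of your description of the certificate is wrong.

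Concretely, the paper shows that the $20$-dimensional span of the clique vectors $(x_C,y_C)$ has a $22$-dimensional orthogonal complement with an explicit basis (one vector per vertex, plus two families indexed by antipodal pairs), collected as columns of a $42\times 22$ matrix $V$, and then takes the \emph{indefinite} diagonal matrix $D = \diag(-11 I_{12}, 4 I_{30})$. The two verified facts are that $V^T D V \succeq 0$ (checked by a rational $LDL$ factorization) and that $\tr D = -12 < 0$; writing any hypothetical solution as $X = VMV^T$ with $M\succeq 0$ and unit diagonal then gives $-12 = \tr(D X) = \tr((V^TDV)M) \geq 0$, a contradiction. Your proposal asserts the dual object should be a PSD matrix $Y\succeq 0$ that ``annihilates the clique vectors and pairs negatively with the unit-diagonal requirement'' --- but a globally PSD matrix cannot pair negatively with a unit-diagonal PSD matrix, since that pairing is a nonnegative trace. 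The correct dual object (as in the paper) is indefinite; it only needs to be PSD after restriction to the orthogonal complement of the clique vectors, while having negative trace. You have the right duality picture from Proposition~\ref{prop:svamos} but transplanted the ``$Y\succeq 0$'' requirement to the wrong place.

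Your proposal to average over $\Gamma = \textup{Aut}(\cG)$ and use the isotypic decomposition would indeed reduce the LMI to a small one, and is a reasonable way to discover a certificate; note that the paper's $D$ is already orbit-constant (one value on vertices, one on edges), so it is implicitly the symmetrized choice. But none of the steps --- computing the span's dimension, identifying the complement, block-diagonalizing, locating the obstructing block, producing and verifying the rational certificate --- is actually done in your proposal, and these are precisely what make the theorem nontrivial. As written, your argument would not compile into a proof without that missing computational and combinatorial content.
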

\begin{proof}
By Proposition~\ref{prop:elliptope}
it suffices to show that there is no $42\times 42$ correlation matrix with all of the 
vectors $(x_C,y_C)$, ranging over maximum cliques $C$, in its nullspace. 

\begin{figure}
\begin{center}
\subfloat[]{\label{fig:basesa}\includegraphics[scale=0.45]{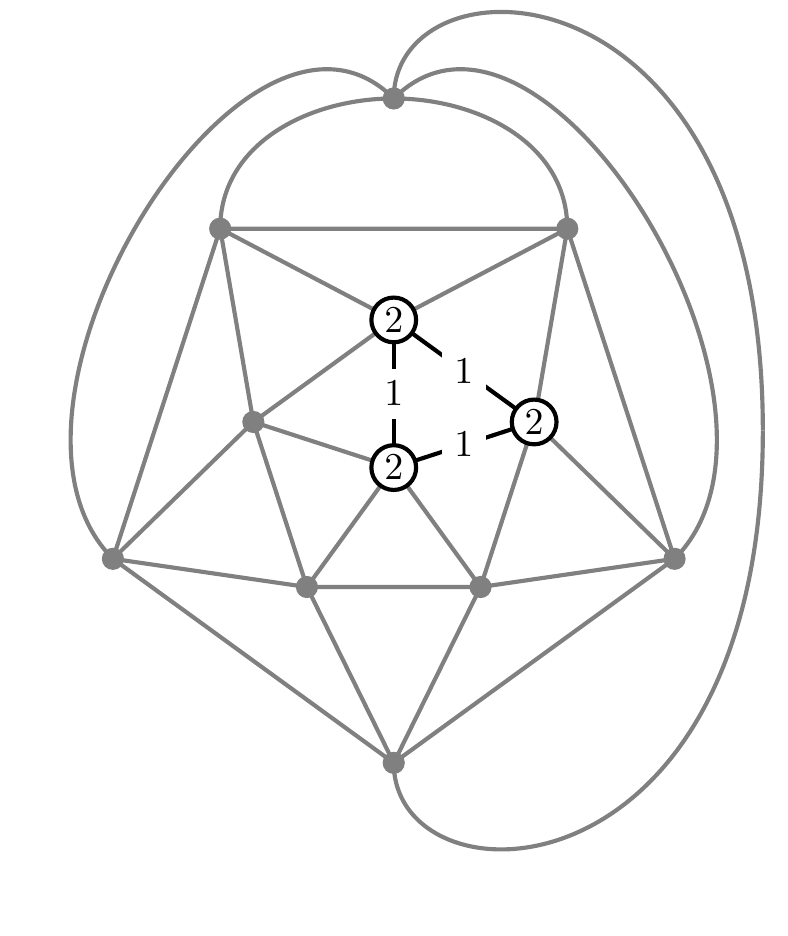}}
\subfloat[]{\label{fig:basesb}\includegraphics[scale=0.45]{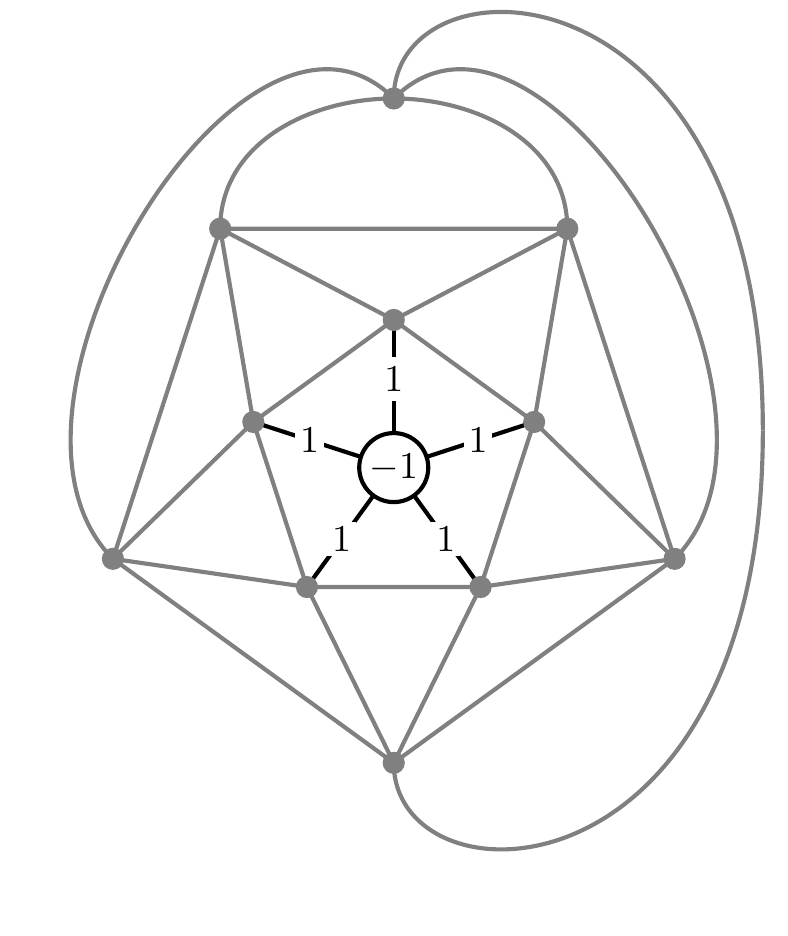}}
\subfloat[]{\label{fig:basesc}\includegraphics[scale=0.45]{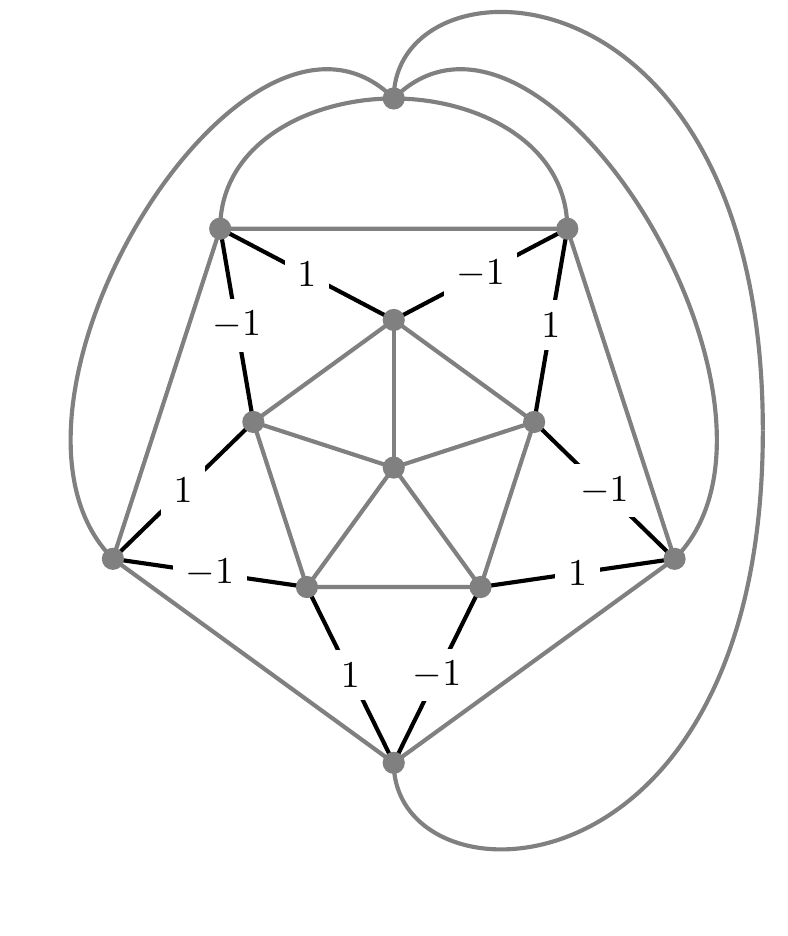}}
\subfloat[]{\label{fig:basesd}\includegraphics[scale=0.45]{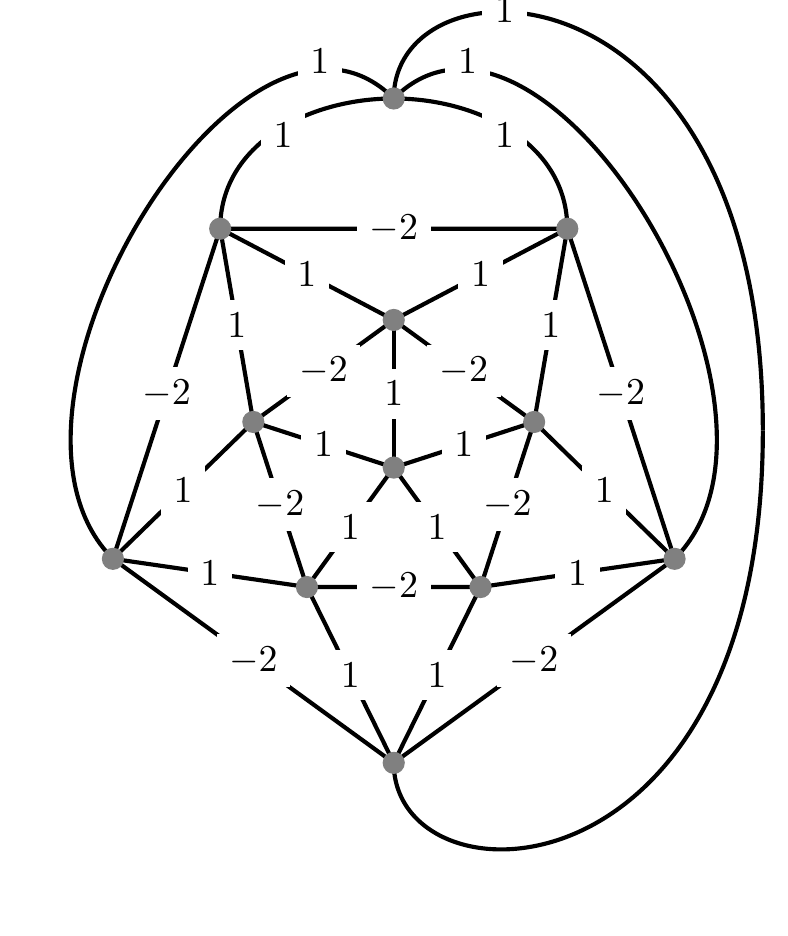}}
\end{center}
\caption{\label{fig:bases} The diagrams illustrate certain vectors in $\RR^{42} = \RR^{|\cV|}\times \RR^{|\cE|}$ thought of 
as vertex and edge labellings of the icosahedral graph.
The labels specify the nonzero coordinates. Unlabeled vertices 
or edges (in gray) correspond to zero coordinates.
Figure (a) is $(x_C,y_C)$ for a particular maximum clique in the icosahedral graph. 
Figures (b), (c), and (d) show certain vectors that are orthogonal to $(x_C,y_C)$ for all maximum cliques. }
\end{figure}

The $22$-dimensional orthogonal complement of the $20$-dimensional span of the
$(x_C,y_C)$ has a basis given by the twelve
vectors (one for each vertex) shown in Figure~\ref{fig:basesb}, any five of the
six vectors (one for each pair of antipodal vertices), shown in Figure~\ref{fig:basesc}, and any five of the six vectors (one for each pair of
antipodal vertices), shown in Figure~\ref{fig:basesd}.  Let $V$ be the $42
\times 22$ matrix with these vectors as columns. One can check (e.g., by
computing a rational LDL decomposition) that 
\[ V^T \begin{bmatrix} -11I_{12\times 12} & 0\\0 & 4I_{30\times 30}\end{bmatrix}V \psd 0. \]
If there were a correlation matrix with all of the $(x_C,y_C)$ in its
nullspace, then it could be written in the form $VMV^T$ where $M \psd 0$ and $[VMV^T]_{ii} = 1$ for all $i=1,2,\ldots,42$. 
But then 
\[ -12 =  \tr\left(\begin{bmatrix} -11I_{12\times 12} & 0\\0 & 4I_{30\times 30}\end{bmatrix} VMV^T\right) = 
	\tr\left((V^T\begin{bmatrix} -11I_{12\times 12} & 0\\0 & 4I_{30\times 30}\end{bmatrix}V)M\right),\]
and so $M$ cannot be positive semidefinite, and  no such correlation matrix can exist.
\end{proof}
The following is an immediate corollary of Theorem~\ref{thm:icos3} and Proposition~\ref{prop:pdet}. We state it explicitly 
because this appears to be the first known example of a cubic hyperbolic polynomial such that no power has 
a definite determinantal representation.
\begin{corollary}
If $\cG$ is the icosahedral graph, then no power of $p_{\cG,3}$, defined in~\eqref{eq:pG},
has a definite determinantal representation.
\end{corollary}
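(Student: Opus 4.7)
The plan is essentially a one-line argument by contraposition, combining the two previously established results. Theorem~\ref{thm:icos3} supplies the crucial fact that $p_{\cG,3}$ is hyperbolic with respect to $e_0$ but is \emph{not} SOS-hyperbolic with respect to $e_0$. Proposition~\ref{prop:pdet} gives the implication
\[
\text{$p^\ell$ has a definite determinantal representation for some $\ell \geq 1$} \;\Longrightarrow\; p \in \SOSHyp_{n,d}(e).
\]
Taking the contrapositive, if $p \in \Hyp_{n,d}(e) \setminus \SOSHyp_{n,d}(e)$, then no positive integer power of $p$ can admit a definite determinantal representation.

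So the proof proceeds in just two steps. First, invoke Theorem~\ref{thm:icos3} to conclude that $p_{\cG,3} \notin \SOSHyp_{43,3}(e_0)$. Second, apply the contrapositive of Proposition~\ref{prop:pdet} to $p_{\cG,3}$ and $e_0$ to deduce that for every positive integer $\ell$, the polynomial $p_{\cG,3}^{\ell}$ fails to have a definite determinantal representation.

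There is no real obstacle here; the content of the corollary lies entirely in the two results being combined. The substantive work has already been done in Theorem~\ref{thm:icos3} (the construction of the correlation-matrix obstruction via the icosahedral graph) and in Proposition~\ref{prop:pdet} (the passage from a determinantal representation of a power to a matrix-sum-of-squares decomposition of $H_{p,e}$). The only mild subtlety worth flagging is that Proposition~\ref{prop:pdet} is stated for $p \in \Hyp_{n,d}(e)$, so we need hyperbolicity of $p_{\cG,3}$ with respect to $e_0$ as a hypothesis, which is also part of the conclusion of Theorem~\ref{thm:icos3}.
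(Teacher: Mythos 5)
Your proposal is correct and matches the paper's argument exactly: the paper also presents this corollary as an immediate consequence of Theorem~\ref{thm:icos3} together with the contrapositive of Proposition~\ref{prop:pdet}. Nothing further is needed.
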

By using Proposition~\ref{prop:nup} together with our example of a hyperbolic, but not SOS-hyperbolic, cubic in $43$ variables (Theorem~\ref{thm:icos3}), we can construct such cubics whenever the number of variables is at least $43$. 
\begin{corollary}
\label{cor:d3}
If $n\geq 43$ and $e\in \RR^{n}$, then $\SOSHyp_{n,3}(e) \subsetneq \Hyp_{n,3}(e)$. 
\end{corollary}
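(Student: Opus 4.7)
The plan is to combine Theorem~\ref{thm:icos3} with Proposition~\ref{prop:nup} to construct, for each $n\geq 43$, a hyperbolic cubic in $n$ variables that is not SOS-hyperbolic with respect to some distinguished direction; then one transfers the result to an arbitrary direction $e\in\RR^n$ (necessarily nonzero, since otherwise $\Hyp_{n,3}(e)=\emptyset$) via invariance under an invertible linear change of variables.

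For $n=43$, Theorem~\ref{thm:icos3} immediately furnishes $p_{\cG,3}\in\Hyp_{43,3}(e_0)\setminus\SOSHyp_{43,3}(e_0)$. For $n>43$, I would apply Proposition~\ref{prop:nup} with $p=p_{\cG,3}$, $e=e_0$, and $q(x')=x_{44}+\cdots+x_n$, a linear form hyperbolic with respect to $e'=(1,1,\ldots,1)\in\RR^{n-43}$. The proposition then yields
\[\tilde p(x,x') = q(e')\,p_{\cG,3}(x) + q(x')\,D_{e_0}p_{\cG,3}(x)\in\Hyp_{n,3}((e_0,0))\setminus\SOSHyp_{n,3}((e_0,0)),\]
where $(e_0,0)\in\RR^{43}\times\RR^{n-43}$.

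To pass to an arbitrary nonzero $e\in\RR^n$, choose any invertible linear map $T:\RR^n\to\RR^n$ with $T(e)=(e_0,0)$ and set $p:=\tilde p\circ T$. A direct computation from Definition~\ref{def:BH} shows that $p_x(t)=\tilde p_{T(x)}(t)$ and $D_up_x(t)=D_{T(u)}\tilde p_{T(x)}(t)$, whence
\[B_{p,e}(x)[u] = B_{\tilde p,(e_0,0)}(T(x))[T(u)]\quad\text{and}\quad u\in\Lambda_+(p,e)\iff T(u)\in\Lambda_+(\tilde p,(e_0,0)).\]
Since being a matrix sum of squares is preserved under substituting a linear map for the indeterminates, the first identity shows that $B_{p,e}(x)[u]$ is a matrix sum of squares for every $u\in\Lambda_+(p,e)$ if and only if the analogous property holds for $\tilde p$ relative to $(e_0,0)$. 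Consequently $p\in\Hyp_{n,3}(e)\setminus\SOSHyp_{n,3}(e)$, giving the strict containment.

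The only nontrivial verification is the transformation law for the parameterized B\'ezoutian under invertible linear changes of variables, which is routine from the definitions but is not explicitly recorded earlier in the paper. There is no substantive obstacle here: the real content of the corollary is carried by Theorem~\ref{thm:icos3}, with Proposition~\ref{prop:nup} bridging from $43$ to larger numbers of variables and the change-of-variables argument handling arbitrary directions.
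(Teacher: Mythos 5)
Your proof is correct and takes essentially the same route as the paper: combine Theorem~\ref{thm:icos3} (the icosahedral cubic in $43$ variables) with Proposition~\ref{prop:nup} to pad out to $n$ variables. You additionally spell out the routine linear change-of-variables argument needed to transport the construction from the distinguished direction $(e_0,0)$ to an arbitrary nonzero $e$ (correctly observing that $e=0$ must be excluded, since $\Hyp_{n,3}(0)=\emptyset$), which the paper leaves implicit.
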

Although we have only been able to construct a hyperbolic cubic in $43$ variables that is not SOS-hyperbolic,
it has been conjectured by Mario Kummer (personal communication), and seems likely, that such examples
should already exist with five variables.
\begin{conjecture}
\label{conj:mario}
If $n\geq 5$ and $e\in \RR^n$, then $\SOSHyp_{n,3}(e) \subsetneq\Hyp_{n,3}(e)$. 
\end{conjecture}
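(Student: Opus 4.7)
The plan is to bootstrap from the icosahedral example provided by Theorem~\ref{thm:icos3}, first enlarging the number of variables via Proposition~\ref{prop:nup}, and then transporting the direction of hyperbolicity from a specific vector to an arbitrary nonzero $e\in\RR^n$ via an invertible linear change of coordinates.

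For the base case $n=43$, Theorem~\ref{thm:icos3} directly produces a cubic $p_{\cG,3}$ that is hyperbolic but not SOS-hyperbolic with respect to $e_0\in\RR^{43}$. For $n>43$, I would apply Proposition~\ref{prop:nup} with $p=p_{\cG,3}$, $e=e_0$, and the linear form $q(x')=x_{44}+x_{45}+\cdots+x_n$, together with any $e'\in\RR^{n-43}$ satisfying $q(e')>0$. This yields a cubic $\tilde p$ in $n$ variables that is hyperbolic but not SOS-hyperbolic with respect to the specific direction $\tilde e:=(e_0,0)\in\RR^n$.

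It remains to move the direction from $\tilde e$ to an arbitrary nonzero $e\in\RR^n$, and for this the key observation is that SOS-hyperbolicity is invariant under invertible linear changes of coordinates. Indeed, for $A\in GL(\RR^n)$ and any $q\in\RR[x_1,\ldots,x_n]_d$, the chain rule gives $(q\circ A^{-1})(x+t(Ae))=q(A^{-1}x+te)$ and $D_u(q\circ A^{-1})(y)=(D_{A^{-1}u}q)(A^{-1}y)$, so the parameterized B\'ezoutians satisfy $B_{q\circ A^{-1},\,Ae}(x)[u]=B_{q,e}(A^{-1}x)[A^{-1}u]$. Since the matrix sum-of-squares property is preserved under the invertible substitution $x\mapsto A^{-1}x$, this shows that $q$ is SOS-hyperbolic with respect to $e$ if, and only if, $q\circ A^{-1}$ is SOS-hyperbolic with respect to $Ae$; the same invariance, applied to $B_{q,e}(x)[e]$, shows that hyperbolicity is preserved as well.

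Choosing any $A\in GL(\RR^n)$ mapping $\tilde e$ to the given $e\neq 0$, the polynomial $\tilde p\circ A^{-1}$ is therefore hyperbolic but not SOS-hyperbolic with respect to $e$, witnessing $\SOSHyp_{n,3}(e)\subsetneq\Hyp_{n,3}(e)$. The only (minor) obstacle is the covariance computation for the parameterized B\'ezoutian; everything else is a direct assembly of Theorem~\ref{thm:icos3} and Proposition~\ref{prop:nup}.
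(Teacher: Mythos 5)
The statement you have been asked to prove is explicitly labelled as a \emph{conjecture} in the paper (attributed to Mario Kummer, via personal communication), and the paper does not claim to prove it. What the paper actually establishes is the weaker Corollary~\ref{cor:d3}, namely the same inclusion but only for $n\geq 43$; the range $5\leq n\leq 42$ is left open (these are the ``?'' entries in Table~\ref{tab:nnh}). Your argument reproduces Corollary~\ref{cor:d3}, not the conjecture.

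Concretely, your base case is Theorem~\ref{thm:icos3}, which supplies a hyperbolic-but-not-SOS-hyperbolic cubic in $43$ variables, and Proposition~\ref{prop:nup} only lets you \emph{add} variables. There is no mechanism in your write-up (and the paper provides none) for \emph{removing} variables, which is what would be needed to descend from $43$ down to $5$. Proposition~\ref{prop:n-increase}, which restricts to subspaces, goes exactly the wrong way: it preserves SOS-hyperbolicity under restriction, so restricting the icosahedral example to a smaller space only tells you that the restriction \emph{could} be SOS-hyperbolic, not that it fails to be. So your proof contains a genuine gap covering every $n$ with $5\leq n\leq 42$, which is precisely the content of the conjecture beyond what is already proved in the paper. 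The paper even indicates a concrete route to close this gap: exhibit a cubic form $q$ in four variables for which $\|x\|^4 - 2zq(x) + z^2\|x\|^2$ is nonnegative but not a sum of squares, and feed it into Proposition~\ref{prop:hyp3test}; no such $q$ is known.

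On the part you do prove: your covariance identity $B_{q\circ A^{-1},\,Ae}(x)[u]=B_{q,e}(A^{-1}x)[A^{-1}u]$ and the conclusion that (SOS-)hyperbolicity depends only on the $GL_n$-orbit of the pair $(p,e)$ are correct, and this cleanly handles the ``for all $e$'' quantifier. That is actually a slightly more careful treatment than the paper gives in its one-line proof of Corollary~\ref{cor:d3}. But the number-of-variables issue is the essential obstruction, and it is not addressed.
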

One way to resolve this conjecture, would be to find an explicit cubic form $q$ in four variables such that 
$\|x\|_2^4 - 2z q(x) + z^2\|x\|_2^2$ is nonnegative but is not a sum of squares. Then $p(x_0,x) = x_0^3-3x_0\|x\|^2 + 2q(x)$
would be hyperbolic, but not SOS-hyperbolic, with respect to $e_0$.  

\section{Canonical linear functionals and the proof of Theorem~\ref{thm:main1}}
\label{sec:pf}
This section is devoted to developing tools used in the proof of Theorem~\ref{thm:main1} (and in Section~\ref{sec:dual} to follow), 
and then presenting the proof of Theorem~\ref{thm:main1} itself.  We begin by showing how a hyperbolic polynomial
$p\in \Hyp_{n,d}(e)$, and a point $x\in \RR^n$, define a set of linear functionals that generalize the orthogonal projectors
onto the eigenspaces of a symmetric matrix. This construction may be of independent interest.

\subsection{Canonical linear functionals}
\label{sec:clf}
Given $p\in \Hyp_{n,d}(e)$ and a fixed $x\in \RR^n$, we define a collection of
linear functionals associated with the eigenvalues of $x$. These are a
generalization of the orthogonal projectors onto the eigenspaces of a symmetric
matrix, as we shall see in Example~\ref{eg:deteg}. If $x$ has simple
eigenvalues, these canonical linear functionals are the directional derivatives
of the eigenvalues.  If $u\in \RR^n$, recall that 
\[ p(x+te+su) = p(e)\prod_{i=1}^{d}(t+t_i(s;x,u))\]
where the $t_i(s;x,u)$ are defined in Theorem~\ref{thm:abg}. Let $x$ have
eigenvalues $\lambda_1(x)> \ldots > \lambda_k(x)$ with multiplicities
$m_1,\ldots,m_k$. Let $I_1,\ldots,I_k$ be a partition of $\{1,2,\ldots,d\}$
such that $|I_i| = m_i$ and $t_j(0;x,u) = \lambda_i(x)$ for all $j\in I_i$.
Define the \emph{canonical linear functionals} to be
\[ \lambda_i'(x)[u]:= \sum_{j\in I_i}t_j'(0;x,u)\quad\textup{for $i=1,2,\ldots,k$}.\]

The following result summarizes the main properties of the $\lambda_i'(x)[u]$ that we will need.
\begin{lemma}
\label{lem:clf-prop}
If $p\in \Hyp_{n,d}(e)$ and $x\in \RR^n$, then the maps $\RR^n \ni u\mapsto \lambda_i'(x)[u]$ satisfy:
\begin{enumerate}
	\item $\displaystyle{\frac{D_up_x(t)}{p_x(t)} = \sum_{i=1}^{k}\frac{\lambda_i'(x)[u]}{\lambda_i(x)+t}}$;
	\item $u\mapsto \lambda_i'(x)[u]$ is linear;
	\item $\lambda_i'(x)[e] = m_i$, the multiplicity of the $i$th eigenvalue of $x$;
	\item $\lambda_i'(x)[x] = m_i\lambda_i(x)$; 
	\item $u\in \Lambda_+(p,e)$ if, and only if, $\lambda_i'(x)[u] \geq 0$ for all $x\in \RR^n$ and all $i$.
\end{enumerate}
\end{lemma}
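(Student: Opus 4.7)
The plan is to prove (1) first from the factorization of Theorem~\ref{thm:abg}, read off (2)--(4) as almost immediate consequences, and then handle (5), where the converse direction is the one real obstacle. For (1), I would differentiate the identity $p(x+te+su)=p(e)\prod_{j=1}^{d}(t+t_j(s;x,u))$ in $s$ at $s=0$ and divide by $p_x(t)$, obtaining
\[
\frac{D_u p_x(t)}{p_x(t)}=\sum_{j=1}^{d}\frac{t_j'(0;x,u)}{t+t_j(0;x,u)}.
\]
Grouping $j$ by the sets $I_i$ on which $t_j(0;x,u)=\lambda_i(x)$ then collapses this $d$-term sum into the claimed $k$-term partial-fraction expansion.

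Statements (2)--(4) follow with little extra work. For (2), $\lambda_i'(x)[u]$ is the residue at $t=-\lambda_i(x)$ of the rational function appearing in (1), equivalently $\lim_{t\to -\lambda_i(x)}(t+\lambda_i(x))D_u p_x(t)/p_x(t)$; linearity in $u$ is then immediate, since $D_u p_x(t)$ depends linearly on $u$ while $p_x(t)$ does not depend on $u$ at all. For (3), set $u=e$: then $D_e p_x(t)=p_x'(t)$, so the left-hand side of (1) is the logarithmic derivative of $p_x(t)=p(e)\prod_i(t+\lambda_i(x))^{m_i}$, whose partial-fraction residues are exactly the $m_i$. For (4), I would apply Euler's identity to the homogeneous polynomial $p$ at the argument $x+te$, giving $D_x p_x(t)+t\,p_x'(t)=d\,p_x(t)$, so that $D_x p_x(t)/p_x(t)=d-t\,p_x'(t)/p_x(t)$; rewriting $tm_i/(t+\lambda_i(x))=m_i-\lambda_i(x)m_i/(t+\lambda_i(x))$ and cancelling $d=\sum_i m_i$ yields $\sum_i m_i\lambda_i(x)/(t+\lambda_i(x))$, and comparison with (1) gives (4).

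The main obstacle is the converse direction of (5). The forward direction is immediate from Theorem~\ref{thm:abg}: if $u\in\Lambda_+(p,e)$, then each $t_j'(0;x,u)\geq 0$, so the sums $\lambda_i'(x)[u]$ are nonnegative. The difficulty in the converse is that the hypothesis $\lambda_i'(x)[u]\geq 0$ holds at every $x$, yet we must somehow extract information about the eigenvalues of $u$ itself. My plan is to exploit a single well-chosen test point, namely $x=-u$. Euler's identity applied at the argument $-u+te$ reads $-D_u p_x(t)+t\,p_x'(t)=d\,p_x(t)$, and the same rearrangement used in (4) then produces
\[
\frac{D_u p_x(t)}{p_x(t)}=-\sum_i \frac{m_i\,\lambda_i(-u)}{t+\lambda_i(-u)}.
\]
Matching residues against (1) forces $\lambda_i'(-u)[u]=-m_i\,\lambda_i(-u)$; since the distinct eigenvalues of $-u$ are exactly the negatives of those of $u$, this identity says $\lambda_i'(-u)[u]=m_i\,\mu_{j(i)}(u)$, where $\mu_{j(i)}(u)$ ranges over all distinct eigenvalues of $u$ as $i$ does. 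Nonnegativity of all these quantities is precisely $u\in\Lambda_+(p,e)$, which completes the converse.
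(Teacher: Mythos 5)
Your proof is correct and follows the same overall strategy as the paper (derive the partial-fraction identity (1) from the factorization in Theorem~\ref{thm:abg}, then read the remaining items off that identity), but several individual steps go by a different route, and one is a small detour.

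For (3) and (4) the paper simply observes $t_j(s;x,e)=\lambda_i(x)+s$ and $t_j(s;x,x)=\lambda_i(x)(1+s)$ for $j\in I_i$, from which the two identities are immediate. You instead specialize the partial-fraction identity at $u=e$ (getting the logarithmic derivative of $p_x$) and use Euler's identity for homogeneous polynomials at $u=x$. Both routes are valid; yours avoids computing the $t_j$ explicitly and instead leans on general identities, at the cost of a bit more algebra.

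For the converse of (5), though, you take an unnecessary detour. Having already proved (4), you can simply set $x=u$ there to get $\lambda_i'(u)[u]=m_i\lambda_i(u)$, so the hypothesis $\lambda_i'(u)[u]\geq 0$ and $m_i>0$ immediately force $\lambda_i(u)\geq 0$ for every $i$, i.e., $u\in\Lambda_+(p,e)$. This is exactly what the paper does. Your choice $x=-u$ with a fresh Euler-identity computation produces $\lambda_i'(-u)[u]=-m_i\lambda_i(-u)$, and since the spectrum of $-u$ is the negation of the spectrum of $u$ (with matching multiplicities), the conclusion is the same---but you are essentially re-deriving (4) in disguise. The argument is correct; it just re-proves a fact you already have in hand.
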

\begin{proof}
The first statement follows from the following straightforward computation
\begin{equation*}
	 \frac{D_up_x(t)}{p_x(t)} = \frac{d}{ds}\left.\log(p(x+te+su))\right|_{s=0}
		 = \sum_{j=1}^{d}\frac{t_j'(0;x,u)}{t_j(0;x,u) + t}
		 = \sum_{i=1}^{k}\frac{\lambda'_i(x)[u]}{\lambda_i(x)+t}.
\end{equation*}
For the second statement, note that the numerator of $\frac{D_up_x(t)}{p_x(t)}$ is linear in $u$, so 
the numerators, $\lambda_i'(x)[u]$, of the partial fraction expansion in the first statement are also linear in $u$.

The third statement follows from the fact that $t_j(s;x,e) = \lambda_i(x) + s$ for all $i\in I_i$ so that $\lambda_i'(x)[e] = |I_i| = m_i$. The fourth statement follows from the fact that $t_j(s;x,x) = \lambda_i(x)(1+s)$ for all $i\in I_i$ so that $\lambda'_i(x)[x] = |I_i|\lambda_i(x) = m_i\lambda_i(x)$. 

For the final statement, on the one hand, if $u\in \Lambda_+(p,e)$, then $t_j'(0;x,u) \geq 0$ (by Theorem~\ref{thm:abg}), 
and so $\lambda_i'(x)[u] \geq 0$.
On the other hand, if $\lambda'_i(x)[u] \geq 0$ for all $x\in \RR^n$ and all $i$, then $\frac{1}{m_i}\lambda'_i(u)[u] = \lambda_i(u) \geq 0$ for all $i$,  and so $u\in \Lambda_+(p,e)$. 
\end{proof}
\begin{remark}
We could have defined $\lambda_i'(x)[u]$ via the first property in Lemma~\ref{lem:clf-prop}, as the residue of 
$D_up_x(t)/p_x(t)$ corresponding to the simple pole at $-\lambda_i(x)$. Then we would need to show, directly, 
that these are nonnegative if, and only if,  $u\in \Lambda_+(p,e)$, rather than relying on Theorem~\ref{thm:abg}.  
\end{remark}

\begin{example}[Canonical linear functionals for the determinant]
\label{eg:deteg}
Let $p(X) = \det(X)$ where $X$ is a symmetric matrix and $e=I$. 
Suppose $X$ has $k$ distinct eigenvalues $\lambda_1(X),\ldots,\lambda_k(X)$ with corresponding multiplicities
$m_1,\ldots,m_k$. Then  
$X = \sum_{i=1}^{k}\lambda_i(X) P_i(X)$
where $P_i(X)$ is the orthogonal projector onto the eigenspace of $X$ corresponding to $\lambda_i(X)$. The associated canonical 
linear functionals are the maps $U \mapsto \tr(P_i(X) U)$. To see why, note that 
\[ \frac{D_U\det(X+tI)}{\det(X+tI)} = \tr(U(X+tI)^{-1}) = \sum_{i=1}^{k}\frac{\tr(UP_i(X))}{t+\lambda_i(X)}.\]
These satisfy $\tr(P_i(X) I) = m_i$ and 
$\tr(P_i(X) X) = m_i\lambda_i(X)$ and $\tr(P_i(X) U)\geq 0$ for all $i$ and all $X$ if, and only if, $U\psd 0$. 
\end{example}

\subsection{Proof of Theorem~\ref{thm:main1}}
By Proposition~\ref{prop:congx}, it is enough to show that $u\in \Lambda_+(p,e)$ if, and only if, $H_{p,e}(x)[u] \psd 0$ for all $x$.
To achieve this, our main task is to 
express the parameterized Hermite matrix in terms of the canonical linear functionals.

\begin{lemma}
\label{lem:hermite-clf}
Let $p\in \Hyp_{n,d}(e)$ and $u\in \RR^n$. With $y\in \RR^d$ define a univariate polynomial
$q_y(t) = y_1+y_2t + \cdots + y_dt^{d-1}$.
Then, using the notation of Section~\ref{sec:clf}, we have that
\[ y^TH_{p,e}(x)[u]y = \sum_{i=1}^{k}[q_y(-\lambda_i(x))]^2\lambda_i'(x)[u].\]
\end{lemma}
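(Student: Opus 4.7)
The plan is to compute $y^T H_{p,e}(x)[u] y$ directly by exploiting the partial fraction expansion of $D_u p_x / p_x$ given in Lemma~\ref{lem:clf-prop}(1), and then recognize the resulting sum as a sum of squared evaluations of $q_y$.

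First I would unpack the definition of the parameterized Hermite matrix. By Definition~\ref{def:BH}, the entries of $H_{p,e}(x)[u]$ are $[H_{p,e}(x)[u]]_{ab} = h_{a+b-1}$, where the $h_m$ are the coefficients in the Laurent expansion $D_u p_x(t)/p_x(t) = \sum_{m=1}^\infty h_m t^{-m}$. So the task reduces to identifying these coefficients in a useful closed form.

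Next I would apply Lemma~\ref{lem:clf-prop}(1) to write
\[
\frac{D_u p_x(t)}{p_x(t)} \;=\; \sum_{i=1}^{k} \frac{\lambda_i'(x)[u]}{\lambda_i(x)+t},
\]
and then expand each simple fraction as a geometric series in $1/t$:
\[
\frac{1}{\lambda_i(x)+t} \;=\; \sum_{m=1}^\infty (-\lambda_i(x))^{m-1}\, t^{-m},
\]
valid for $|t|$ sufficiently large. Matching coefficients gives $h_m = \sum_{i=1}^k \lambda_i'(x)[u]\, (-\lambda_i(x))^{m-1}$, hence
\[
[H_{p,e}(x)[u]]_{ab} \;=\; \sum_{i=1}^k \lambda_i'(x)[u]\, (-\lambda_i(x))^{a-1} (-\lambda_i(x))^{b-1}.
\]

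Finally I would form the quadratic form. Summing against $y_a y_b$ and interchanging summations yields
\[
y^T H_{p,e}(x)[u] y \;=\; \sum_{i=1}^{k} \lambda_i'(x)[u] \left( \sum_{a=1}^{d} y_a (-\lambda_i(x))^{a-1} \right)^{\!2} \;=\; \sum_{i=1}^k [q_y(-\lambda_i(x))]^2\, \lambda_i'(x)[u],
\]
which is the desired identity. There is no serious obstacle here: the argument is just the observation that a Hankel matrix built from moments of a finitely supported (signed) measure is rank-at-most-$k$ with an explicit Gram structure, and Lemma~\ref{lem:clf-prop}(1) supplies precisely the decomposition needed to read off that structure.
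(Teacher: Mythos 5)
Your proof is correct and follows exactly the same route as the paper's: expand $D_u p_x/p_x$ via Lemma~\ref{lem:clf-prop}(1), convert each $1/(t+\lambda_i(x))$ to a geometric series in $t^{-1}$ to read off the Hankel entries, and then recognize the resulting quadratic form as $\sum_i \lambda_i'(x)[u]\,[q_y(-\lambda_i(x))]^2$. No differences worth noting.
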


\begin{proof}
We expand the rational function $t\mapsto \frac{D_up_x(t)}{p_x(t)}$ as a Laurent series via
\[ \frac{D_up_x(t)}{p_x(t)} = \sum_{i=1}^{k}\frac{\lambda'_i(x)[u]}{t+\lambda_i(x)} = \sum_{i=1}^{k}
\lambda_i'(x)[u]\sum_{\ell=1}^{\infty}
(-\lambda_i(x))^{\ell-1}t^{-\ell}.\]
It then follows that  
$[H_{p,e}(x)[u]]_{j,\ell} = \sum_{i=1}^{k}\lambda_i'(x)[u](-\lambda_i(x))^{j+\ell-2}$.
The result follows because, for any $1\leq i \leq k$,  
$\sum_{j,\ell=1}^{d}y_jy_\ell(-\lambda_i(x))^{j+\ell-2} = [q_y(-\lambda_i(x))]^2$.
\end{proof}
 Specialized to the determinant, this agrees with the explicit computation from Example~\ref{eg:hermdet}. 
 \begin{example}[Hankel matrix for the determinant]
 \label{eg:hermdeth}
 Using the notation of Example~\ref{eg:deteg}, the quadratic form defined by the Hankel matrix is
 \begin{align*}
	 y^TH_{p,e}(X)[U]y & = \textstyle{\sum_{i=1}^{k}q_y(-\lambda_i(X))^2 \tr(P_i(X)U)}\\
			& = \textstyle{\tr\left[\left(\sum_{i=1}^{k}q_y(-\lambda_i(X))^2P_i(X)\right)U\right]} = \tr((q_y(-X))^2U)
\end{align*}
 where $q_y(-X) = y_1 + y_2(-X) + y_3(-X)^2 + \cdots + y_{d}(-X)^{d-1}$. 
 \end{example}
We are now in a position to complete the proof of Theorem~\ref{thm:main1}.
\begin{corollary}
If $p\in \Hyp_{n,d}(e)$, then $u\in \Lambda_+(p,e)$ if, and only if, $H_{p,e}(x)[u] \psd 0$ for all $x$. 
\end{corollary}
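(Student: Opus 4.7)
The plan is to combine the two lemmas preceding the corollary directly. Lemma~\ref{lem:hermite-clf} gives the crucial representation
\[ y^T H_{p,e}(x)[u] y = \sum_{i=1}^{k} [q_y(-\lambda_i(x))]^2 \, \lambda_i'(x)[u], \]
and Lemma~\ref{lem:clf-prop}(5) characterizes membership in $\Lambda_+(p,e)$ in terms of nonnegativity of the canonical linear functionals $\lambda_i'(x)[u]$. So the entire argument reduces to showing that positive semidefiniteness of $H_{p,e}(x)[u]$ for every $x$ is equivalent to nonnegativity of each $\lambda_i'(x)[u]$ for every $x$.

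For the forward direction, if $u \in \Lambda_+(p,e)$, then by Lemma~\ref{lem:clf-prop}(5) we have $\lambda_i'(x)[u] \geq 0$ for all $x$ and all $i$, so the right-hand side of the displayed identity is a nonnegative combination of squares for every $y$, which gives $H_{p,e}(x)[u] \psd 0$.

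For the converse, fix $x \in \RR^n$ and let $\lambda_1(x) > \cdots > \lambda_k(x)$ be the distinct hyperbolic eigenvalues of $x$ with $k \leq d$. For each index $i$, choose $y \in \RR^d$ so that $q_y(t)$ is the Lagrange interpolating polynomial of degree at most $k-1 \leq d-1$ satisfying $q_y(-\lambda_i(x)) = 1$ and $q_y(-\lambda_j(x)) = 0$ for $j \neq i$; such a $y$ exists precisely because the $-\lambda_j(x)$ are distinct. Substituting into Lemma~\ref{lem:hermite-clf} collapses the sum to $y^T H_{p,e}(x)[u] y = \lambda_i'(x)[u]$, which is nonnegative by assumption. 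Since $x$ and $i$ were arbitrary, Lemma~\ref{lem:clf-prop}(5) yields $u \in \Lambda_+(p,e)$.

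There is no real obstacle: both lemmas have already been established, and the only nontrivial step is the Lagrange interpolation trick, which is elementary once one recognizes that $q_y$ ranges over all polynomials of degree less than $d$ as $y$ ranges over $\RR^d$. With Theorem~\ref{thm:main1} then following immediately via Proposition~\ref{prop:congx} applied to the Hermite-to-B\'ezoutian congruence.
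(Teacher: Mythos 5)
Your proof is correct and takes essentially the same approach as the paper: both directions rest on Lemma~\ref{lem:hermite-clf} together with Lagrange interpolation to isolate individual canonical linear functionals. The paper's converse is marginally more economical—it argues by contraposition, plugs in the single point $x=u$, and invokes Lemma~\ref{lem:clf-prop}(4) (giving $\lambda_i'(u)[u]=m_i\lambda_i(u)<0$) rather than routing through the "for all $x$" characterization of Lemma~\ref{lem:clf-prop}(5)—but this is a cosmetic difference, not a different method.
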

\begin{proof}
If $u\in \Lambda_+(p,e)$, then $\lambda'(x)[u]\geq 0$ for all $i$ and all $x$. From Lemma~\ref{lem:hermite-clf}
it follows that $y^TH_{p,e}(x)[u]y \geq 0$ for all $x$ and all $y$. Conversely, suppose that $\lambda_i(u) < 0$ for some $i$. 
Define $y$ so that the polynomial $q_y$ satisfies $q_y(-\lambda_j(x)) = 0$ if $j\neq i$ and $q_y(-\lambda_j(x))=1$ if $j=i$. 
Such a polynomial can be constructed via Lagrange interpolation, for instance. Then 
\[ y^TH_{p,e}(u)[u]y = \lambda_i'(u)[u] = m_i\lambda_i(u) < 0.\]
\end{proof}

\section{View from the dual cone}
\label{sec:dual}

In this section we consider the results obtained so far from the point of view of the \emph{dual cone}
\[ \Lambda_+(p,e)^* := \{ \xi\in (\RR^{n})^*\;:\; \xi[x] \geq 0\;\;\textup{for all $x\in \Lambda_+(p,e)$}\},\]
the (closed convex) cone of linear functionals that take nonnegative values on $\Lambda_+(p,e)$. 
In particular, we study the image of the polynomial map $\phi_{p,e} = \phi^H_{p,e}$, defined in~\eqref{eq:phiH}.
We will see that this image contains the interior of the dual cone and is contained in the dual cone.
Moreover, we give examples in which the image of $\phi_{p,e}$
is precisely the dual cone $\Lambda_+(p,e)^*$. 

The image of $\phi_{p,e}$ can be expressed using the canonical linear functionals of Section~\ref{sec:clf}.
\begin{proposition}
\label{prop:image-cone}
If $p\in \Hyp_{n,d}(e)$ and $x\in \RR^n$ has $k\leq d$ distinct hyperbolic eigenvalues, then 
\[ \phi_{p,e}(x,\RR^d) = \textup{cone}\{\lambda_1'(x)[\cdot],\ldots,\lambda_k'(x)[\cdot]\}.\]
\end{proposition}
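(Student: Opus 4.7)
The plan is to prove both inclusions by reading the identity in Lemma~\ref{lem:hermite-clf} in two directions. From that lemma we have the explicit formula
\[ \phi_{p,e}(x,y)[u] = y^T H_{p,e}(x)[u] y = \sum_{i=1}^{k} [q_y(-\lambda_i(x))]^2 \, \lambda_i'(x)[u], \]
where $q_y(t) = y_1 + y_2 t + \cdots + y_d t^{d-1}$. Read as a linear functional in $u$, this exhibits $\phi_{p,e}(x,y)[\cdot]$ as a nonnegative linear combination of the canonical linear functionals $\lambda_1'(x)[\cdot], \ldots, \lambda_k'(x)[\cdot]$, with coefficients $[q_y(-\lambda_i(x))]^2 \geq 0$. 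This immediately gives the inclusion $\phi_{p,e}(x,\RR^d) \subseteq \cone\{\lambda_1'(x)[\cdot],\ldots,\lambda_k'(x)[\cdot]\}$.

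For the reverse inclusion, I would fix arbitrary nonnegative scalars $c_1,\ldots,c_k \geq 0$ and exhibit $y \in \RR^d$ such that $[q_y(-\lambda_i(x))]^2 = c_i$ for $i=1,\ldots,k$. Since $x$ has $k$ distinct hyperbolic eigenvalues $\lambda_1(x),\ldots,\lambda_k(x)$, the points $-\lambda_1(x),\ldots,-\lambda_k(x)$ are distinct, and $k \leq d$ means we have enough degrees of freedom: by Lagrange interpolation we can choose $q_y$ of degree at most $d-1$ with $q_y(-\lambda_i(x)) = \sqrt{c_i}$ for each $i$. Reading off the coefficient vector $y \in \RR^d$ of this interpolating polynomial and substituting back into the identity above yields
\[ \phi_{p,e}(x,y)[\cdot] = \sum_{i=1}^{k} c_i \, \lambda_i'(x)[\cdot], \]
which shows that every element of $\cone\{\lambda_1'(x)[\cdot],\ldots,\lambda_k'(x)[\cdot]\}$ lies in $\phi_{p,e}(x,\RR^d)$.

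There is no real obstacle here: the only place where the hypotheses are used is in ensuring (i) that the eigenvalues are distinct, so the interpolation nodes are well-defined, and (ii) that $k \leq d$, so the interpolation is feasible within polynomials of degree at most $d-1$. Everything else is a direct application of Lemma~\ref{lem:hermite-clf}.
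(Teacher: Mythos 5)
Your proposal is correct and follows essentially the same route as the paper's proof: both directions rest on the identity from Lemma~\ref{lem:hermite-clf}, with the forward inclusion read off directly from the nonnegative coefficients $[q_y(-\lambda_i(x))]^2$ and the reverse inclusion obtained by Lagrange interpolation through the $k \leq d$ distinct nodes $-\lambda_i(x)$ with prescribed values $\sqrt{c_i}$.
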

\begin{proof}
From Lemma~\ref{lem:hermite-clf} we know that 
$\phi_{p,e}(x,y) = \sum_{i=1}^{k}(q_y(-\lambda_i(x)))^2\lambda'_i(x)[\cdot]$
which is clearly a nonnegative combination of the linear functionals $\lambda_i'(x)[\cdot]$. 
Conversely, given any nonnegative scalars $\eta_1,\eta_2,\ldots,\eta_k$, we can 
choose $y\in \RR^d$ so that the polynomial $q_y$ (of degreee $d-1$)
satisfies $q_y(-\lambda_i(x)) = \sqrt{\eta_i}$ (by interpolation). This shows that any nonnegative combination 
of the $\lambda_i'(x)[\cdot]$ is in the set $\phi_{p,e}(x,\RR^d)$. 
\end{proof}
\subsection{Parameterization of the dual cone up to closure}

The following result shows that the map $\phi_{p,e}$ almost parameterizes the dual of the hyperbolicity cone. 
One inclusion follows directly from Theorem~\ref{thm:main1}. The other direction follows, for instance, from 
the (well-known) fact that the gradient of $\log p(x)$ maps the interior of the hyperbolicity cone
onto the relative interior of its dual cone. 
In the statement of the theorem we use the notation $\textup{int}(S)$ to denote the interior of the set $S$ and $\textup{relint}(S)$
to denote the relative interior. 
\begin{theorem}
\label{thm:dualma}
If $p\in \Hyp_{n,d}(e)$, then 
$\textup{relint}(\Lambda_+(p,e)^*) \subseteq \phi_{p,e}(\RR^n,\RR^d) \subseteq \Lambda_+(p,e)^*$.
\end{theorem}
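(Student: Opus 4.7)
Plan for the proof of Theorem~\ref{thm:dualma}.

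The right-hand inclusion $\phi_{p,e}(\RR^n,\RR^d)\subseteq \Lambda_+(p,e)^*$ is essentially immediate from Theorem~\ref{thm:main1}: for any fixed $(x,y)\in \RR^n\times \RR^d$ and any $u\in \Lambda_+(p,e)$, we have $\phi_{p,e}(x,y)[u] = y^T H_{p,e}(x)[u]\,y\geq 0$ because $H_{p,e}(x)[u]\psd 0$. So $\phi_{p,e}(x,y)[\cdot]$ is a nonnegative linear functional on $\Lambda_+(p,e)$, i.e., an element of $\Lambda_+(p,e)^*$. I would record this as a one-line observation.

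For the left-hand inclusion, the strategy is to exhibit each $\xi\in \textup{relint}(\Lambda_+(p,e)^*)$ as $\phi_{p,e}(x,y)$ for a suitable interior point $x$. The bridge is Proposition~\ref{prop:image-cone}, which says $\phi_{p,e}(x,\RR^d) = \textup{cone}\{\lambda_1'(x)[\cdot],\ldots,\lambda_k'(x)[\cdot]\}$. Specializing the first identity in Lemma~\ref{lem:clf-prop} at $t=0$ gives
\[ \frac{D_up(x)}{p(x)} \;=\; \sum_{i=1}^{k}\frac{\lambda_i'(x)[u]}{\lambda_i(x)},\]
so the gradient of $\log p$ at $x$, viewed as a linear functional in $u$, equals $\sum_{i}\lambda_i(x)^{-1}\lambda_i'(x)[\cdot]$. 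When $x\in \textup{int}(\Lambda_+(p,e))$ all hyperbolic eigenvalues $\lambda_i(x)$ are strictly positive, hence every coefficient $1/\lambda_i(x)$ is positive, and Proposition~\ref{prop:image-cone} gives $\nabla\log p(x) \in \phi_{p,e}(x,\RR^d)\subseteq \phi_{p,e}(\RR^n,\RR^d)$.

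To finish, I invoke the well-known fact from the theory of self-concordant barriers of logarithmic homogeneity $-d$: if $F(x)=-\log p(x)$ is the canonical barrier on the hyperbolicity cone, then $-\nabla F = \nabla\log p$ restricts to a bijection $\textup{int}(\Lambda_+(p,e))\to \textup{relint}(\Lambda_+(p,e)^*)$ (G\"uler~\cite{guler1997hyperbolic}; see also the standard Legendre-transform argument, which uses the strict convexity of $F$ on a transversal to the lineality space and logarithmic homogeneity to identify the image with $\textup{relint}(\Lambda_+(p,e)^*)$). Combined with the previous paragraph, every $\xi\in \textup{relint}(\Lambda_+(p,e)^*)$ lies in $\phi_{p,e}(\RR^n,\RR^d)$.

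The only delicate point I anticipate is the surjectivity of $\nabla\log p$ onto the \emph{relative} interior of the dual cone when $p$ is not complete, i.e., when $\Lambda_+(p,e)$ has a nontrivial lineality space. This is the main obstacle. I would handle it by noting that $p$ descends to a complete hyperbolic polynomial on the quotient $\RR^n/\textup{lineality}$, apply the pointed-cone version of the bijection there, and then pull back via the natural identification of the quotient's dual cone with $\Lambda_+(p,e)^*\subseteq (\RR^n/\textup{lineality})^* \hookrightarrow (\RR^n)^*$. Everything else in the argument is a short computation using the material already developed in Section~\ref{sec:clf}.
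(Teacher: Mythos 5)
Your proposal is correct and follows essentially the same route as the paper: the right inclusion is the same one-line consequence of Theorem~\ref{thm:main1}, and the left inclusion proceeds, exactly as in the paper, by observing that $D_u\log p(x) = \sum_i \lambda_i(x)^{-1}\lambda_i'(x)[u]$ so that for $x\in\interior{\Lambda_+(p,e)}$ the gradient of $\log p$ lies in $\cone\{\lambda_i'(x)[\cdot]\} = \phi_{p,e}(x,\RR^d)$ by Proposition~\ref{prop:image-cone}, then invoking surjectivity of $\nabla\log p$ onto $\relint(\Lambda_+(p,e)^*)$. The only difference is cosmetic: you cite that surjectivity as a known fact (and then sketch how to handle a nontrivial lineality space by quotienting), whereas the paper re-derives it on the spot by taking $x$ to be the minimizer of $-\log p$ on the slice $\{\xi[x]=d\}\cap\Lambda_+(p,e)$ and reading $\xi = \nabla\log p(x)$ off the KKT conditions; that self-contained optimization argument implicitly handles the non-complete case without a separate quotient step.
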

\begin{proof}
The right hand inclusion follows from Theorem~\ref{thm:main1}. Indeed if $u\in \Lambda_+(p,e)$, then 
\[ \phi_{p,e}(x,y)[u] \geq 0\quad\textup{for all $x\in \RR^n$ and all $y\in \RR^d$}\]
which implies that $\phi_{p,e}(\RR^n,\RR^d)\subseteq \Lambda_+(p,e)^*$. For the left-hand  inclusion,
let $\xi$ be an arbitrary linear functional in the relative interior of $\Lambda_+(p,e)^*$. 
Let $x$ be any
optimal point (which exists because $\xi$ is in the relative interior of the dual cone) 
of the convex optimization problem 
\[ \min_{x\in \Lambda_+(p,e)} -\log p(x)\;\;\textup{subject to}\;\;\xi[x]=d.\]
Then $x\in  \textup{int}(\Lambda_+(p,e))$ (because the objective function
goes to infinity on the boundary) and so $\lambda_i(x)>0$ for all $i$. From the optimality conditions
\[ \xi[u] = D_u \log p(x) = \sum_{i=1}^{k} \frac{\lambda_i'(x)[u]}{\lambda_i(x)}\quad\textup{for all $u\in \RR^n$.}\]
It follows that $\xi[\cdot] \in \textup{cone}\{\lambda_1'(x)[\cdot],\ldots,\lambda_k'(x)[\cdot]\}\subseteq \phi_{p,e}(x,\RR^d)$ 
(by Proposition~\ref{prop:image-cone}).
\end{proof}

\begin{remark}
If $p$ is a complete hyperbolic polynomial, the gradient of $\log p(x)$ gives a rational bijection
between the interior of a hyperbolicity cone and the interior of its dual cone.
A key difference between $\phi_{p,e}$ and the gradient of $\log p(x)$ 
is that the former maps \emph{all} of $\RR^n\times \RR^d$ into the dual cone $\Lambda_+(p,e)^*$, where as the latter 
only maps the hyperbolicity cone itself into the dual cone. This difference allows us to construct polynomials
that are globally nonnegative, rather than just nonnegative restricted to the open hyperbolicity cone. 
\end{remark}

We have seen that the image of $\phi_{p,e}$ is sandwiched between the interior of the dual cone and the dual cone itself. 
The following example shows that the image of $\phi_{p,e}$ may not contain all of the boundary of the dual cone.

\begin{example}[{Singular cubic curve}]
\label{eg:scubic}
Consider the hyperbolic cubic
\[ p(x_1,x_2,x_3) = \det\cA(x)\quad\textup{where}\quad
\cA(x) = \begin{bmatrix} x_3 & 0 & x_1\\0 & x_1+x_3 & x_2\\x_1 & x_2 & x_3\end{bmatrix}\]
taken from~\cite[Section 4.1]{henrion2010semidefinite}. From the definite determinantal representation 
we can see that $p$ is hyperbolic with respect to $e = (0,0,1)$. Its dual cone can be parameterized by
\[ \Lambda_+(p,e)^* = \{\cA^*(zz^T) = (z_2^2 + 2z_1z_3, 2z_2z_3, z_1^2 + z_2^2 + z_3^2): z\in \RR^3\}\]
as discussed in~\cite{henrion2010semidefinite}. The intersection of the hyperbolicity cone with the affine hyperplane
$\{x: \langle e,x\rangle = 1\}$ and the intersection of 
$\phi_{p,e}(\RR^3,\RR^3)$ with the affine hyperplane 
$\{z:\langle e,z\rangle = 1\}$ are shown in Figure~\ref{fig:scubic}.
\begin{figure}
\begin{center}
\includegraphics[scale=1]{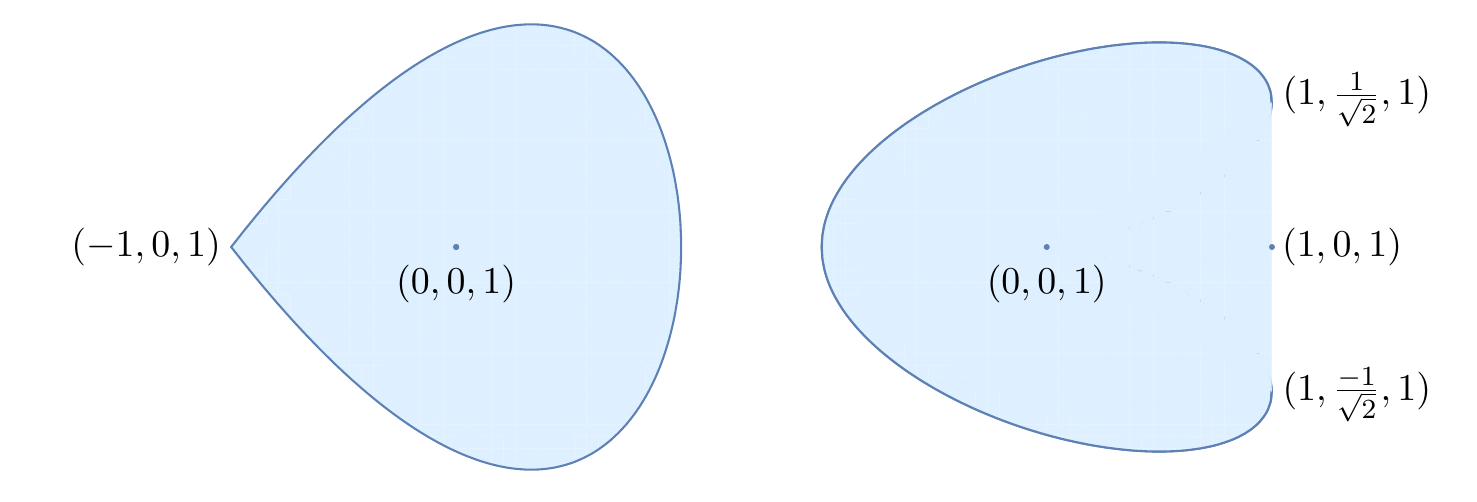}
\end{center}
\caption{\label{fig:scubic} On the left is $\Lambda_+(p,e)\cap \{x:\langle e,x\rangle=1\}$, and on the right is $\phi_{p,e}(\RR^3,\RR^3) \cap \{z: \langle z,e\rangle = 1\}$, where $p$ is the cubic from Example~\ref{eg:scubic}.}
\end{figure}
The intersection of $\Lambda_+(p,e)^*$ with the
same affine hyperplane is the closure of the set shown on the right in Figure~\ref{fig:scubic}.
The dual cone $\Lambda_+(p,e)^*$ and the image of $\phi_{p,e}$ differ in  one has a 
two-dimensional face given by the conic hull of $(1,1/\sqrt{2},1)$ and $(1,-1/\sqrt{2},1)$, and in the other
this face is replaced by the single ray generated by $(1,0,1)$. 
\end{example}

\subsection{Exact parameterizations of the dual cone}

We now study certain situations in which the image of $\phi_{p,e}$ is precisely the closed dual cone. 
From Example~\ref{eg:scubic}, we know that this does not always occur. 
In this section we show that the image of $\phi_{p,e}$ is the closed dual cone $\Lambda_+(p,e)^*$
when, for instance, $p$ is strictly hyperbolic (see below for a definition), $p$ is a product of linear forms, 
or $p$ is the determinant restricted to symmetric matrices. 

If $p\in \Hyp_{n,d}(e)$ and $x\in \RR^n$, we define the \emph{multiplicity} of $x$ to be 
the multiplicity of the root $t=0$ of $p_x(t)$. If $p_x(0) \neq 0$, then we define the multiplicity of 
$x$ to be zero. A polynomial $p\in \Hyp_{n,d}(e)$ is \emph{strictly hyperbolic} if every element 
$x\in \RR^n$ that is not a multiple of $e$ has $n$ distinct hyperbolic eigenvalues. It follows 
that if $p$ is strictly hyperbolic and $x$ is nonzero and satisfies $p(x) = 0$, then $x$ has 
multiplicity $1$, the gradient of $p$ at $x$ is non-zero, and $x$ is a smooth point of the boundary of 
the hyperbolicity cone.  

\begin{proposition}
If $p\in \Hyp_{n,d}(e)$ is strictly hyperbolic, then $\phi_{p,e}(\RR^n,\RR^d) = \Lambda_+(p,e)^*$. 
\end{proposition}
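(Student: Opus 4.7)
The plan is to establish the inclusion $\Lambda_+(p,e)^* \subseteq \phi_{p,e}(\RR^n,\RR^d)$; the reverse inclusion is Theorem~\ref{thm:main1}, and Theorem~\ref{thm:dualma} already supplies the relative interior. Since $\phi_{p,e}(x,0)=0$ and $\phi_{p,e}(x,ty)=t^2\phi_{p,e}(x,y)$ for $t>0$, it suffices to realize an arbitrary nonzero boundary functional $\xi\in\partial\Lambda_+(p,e)^*$ as a value of $\phi_{p,e}$.

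The strategy has four steps: (i) produce a nonzero witness $x_F\in\partial\Lambda_+(p,e)$ with $\xi[x_F]=0$; (ii) use strict hyperbolicity to show $x_F$ is a smooth boundary point of $\Lambda_+(p,e)$; (iii) identify $\xi$ as a positive scalar multiple of $\lambda_d'(x_F)$; and (iv) realize that multiple as a value of $\phi_{p,e}(x_F,\cdot)$ via Lagrange interpolation. For (i), since $\xi\in\Lambda_+(p,e)^*$ is nonzero and not in the relative interior, and $\Lambda_+(p,e)$ is pointed (strict hyperbolicity implies completeness), convex duality produces some nonzero $y\in\Lambda_+(p,e)$ with $\xi[y]\leq 0$; combined with $\xi\in\Lambda_+(p,e)^*$ this forces $\xi[y]=0$, and $y$ must lie on the boundary, else $\xi$ would vanish identically.

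For (ii), because $\partial\Lambda_+(p,e)\cap\RR e=\{0\}$ and $x_F\neq 0$, we have $x_F\notin\RR e$, so strict hyperbolicity guarantees $x_F$ has $d$ distinct eigenvalues with $\lambda_d(x_F)=0$ a simple root of $p_{x_F}$. This forces $\nabla p(x_F)\neq 0$, so $\partial\Lambda_+(p,e)$ is smooth at $x_F$. For (iii), smoothness implies the exposed face $\{\eta\in\Lambda_+(p,e)^* : \eta[x_F]=0\}$ is one-dimensional. By parts 3, 4, and 5 of Lemma~\ref{lem:clf-prop}, the functional $\lambda_d'(x_F)$ lies in this face and is nonzero (it satisfies $\lambda_d'(x_F)[e]=1$, $\lambda_d'(x_F)[x_F]=\lambda_d(x_F)=0$, and $\lambda_d'(x_F)\in\Lambda_+(p,e)^*$), so $\xi=c\,\lambda_d'(x_F)$ for some $c>0$. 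For (iv), since $-\lambda_1(x_F),\ldots,-\lambda_{d-1}(x_F),-\lambda_d(x_F)=0$ are $d$ distinct points, Lagrange interpolation yields a unique $y\in\RR^d$ such that $q_y(\tau)=\sum_{j=1}^d y_j\tau^{j-1}$ satisfies $q_y(-\lambda_k(x_F))=0$ for $k<d$ and $q_y(0)=\sqrt{c}$. By Lemma~\ref{lem:hermite-clf},
\[ \phi_{p,e}(x_F,y)=\sum_{k=1}^d [q_y(-\lambda_k(x_F))]^2\,\lambda_k'(x_F)=c\,\lambda_d'(x_F)=\xi. \]

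The crux is step (iii), where strict hyperbolicity is essential: it ensures $\lambda_d(x_F)$ is a simple zero of $p_{x_F}$, hence that $\Lambda_+(p,e)$ has smooth boundary at $x_F$, hence that the exposed face through which $\xi$ must pass is only one-dimensional. Notably, this avoids having to establish strict convexity of $\Lambda_+(p,e)$; it suffices that every nonzero boundary point of the hyperbolicity cone is a smooth point. The remaining steps are routine convex duality and Lagrange interpolation.
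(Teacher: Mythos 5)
Your proposal is correct and follows essentially the same route as the paper's proof: locate a nonzero boundary point $x_\xi$ of $\Lambda_+(p,e)$ annihilated by $\xi$, use strict hyperbolicity to conclude smoothness (hence uniqueness of the supporting hyperplane, hence that $\xi$ is a positive multiple of $\lambda_{\min}'(x_\xi)$), and realize this multiple in $\phi_{p,e}(x_\xi,\RR^d)$ via the interpolation argument that also underlies Proposition~\ref{prop:image-cone}. Your write-up merely spells out some steps (existence of $x_\xi$, the $x_\xi\notin\RR e$ observation, the explicit Lagrange interpolant) that the paper states more tersely or handles via the preamble and Proposition~\ref{prop:image-cone}.
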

\begin{proof}
From Theorem~\ref{thm:dualma} we know that 
$\Lambda_+(p,e)^*\supseteq \phi_{p,e}(\RR^n,\RR^d) \supseteq \textup{int}(\Lambda_+(p,e)^*)$. 
It suffices to show that for any $\xi\in \partial \Lambda_+(p,e)^*$ there exists $x\in \RR^n$ such 
that $\xi\in \phi_{p,e}(x,\RR^d)$. 

Any non-zero $\xi\in \partial \Lambda_+(p,e)^*$ must vanish at some $x_\xi\in \partial\Lambda_+(p,e)\setminus\{0\}$ 
(since otherwise $\xi$ would be in the interior of the dual cone). Since $x_\xi$ is a non-zero element of the boundary
of the hyperbolicity cone, it is a smooth point. As such, there is a unique supporting hyperplane to the cone at $x_\xi$. 
Since $x$ is in the boundary of the hyperbolicity cone, $\lambda_{\min}(x_\xi) = 0$ and so $\lambda_{\min}'(x_\xi)[x_\xi] = 0$. 
Both $\xi$ and $\lambda_{\min}'(x_\xi)$ define supporting hyperplanes at $x_\xi$, so one must be a nonnegative 
multiple of the other. In particular $\xi$ is in the cone over the canonical linear functionals at $x_\xi$ and so 
is an element of the image of $\phi_{p,e}$. 
\end{proof}
In the case where our hyperbolic polynomial is a product of distinct linear forms, 
so that the resulting hyperbolicity cone is polyhedral,
the image of $\phi_{p,e}$ is the closed dual cone.
\begin{proposition}
If $p\in \Hyp_{n,d}(e)$ is a product of distinct linear forms, i.e., $p(x) = \prod_{i=1}^{d}a_i[x]$  with $a_i[e]>0$ for all $i$ and $a_i\neq a_j$ for all $i,j$, then $\phi_{p,e}(\RR^n,\RR^d) = \Lambda_+(p,e)^*$. 
\end{proposition}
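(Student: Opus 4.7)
The plan is to apply Proposition~\ref{prop:image-cone} at a single well-chosen $x \in \RR^n$ and show that $\phi_{p,e}(x,\RR^d)$ already equals the full dual cone $\Lambda_+(p,e)^*$.

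First I would compute the hyperbolic eigenvalues explicitly in terms of the linear forms. Since
\[ p(te-x) = \prod_{i=1}^{d}(t\,a_i[e] - a_i[x]) = \prod_{i=1}^{d}a_i[e]\cdot \prod_{i=1}^{d}\left(t - \frac{a_i[x]}{a_i[e]}\right), \]
the hyperbolic eigenvalues of $x$ are the $d$ scalars $a_i[x]/a_i[e]$, taken in some order. Differentiating $p_x(t) = \prod_i (a_i[x] + t\,a_i[e])$ in direction $u$ yields the partial fraction expansion
\[ \frac{D_u p_x(t)}{p_x(t)} = \sum_{i=1}^{d}\frac{a_i[u]}{a_i[x] + t\,a_i[e]} = \sum_{i=1}^{d}\frac{a_i[u]/a_i[e]}{t + a_i[x]/a_i[e]}. \]
Matching this with Lemma~\ref{lem:clf-prop}(1), one reads off that whenever the $d$ values $a_1[x]/a_1[e],\ldots,a_d[x]/a_d[e]$ are pairwise distinct, the canonical linear functionals at $x$ are
\[ \lambda_i'(x)[\cdot] = \frac{a_i[\cdot]}{a_i[e]}, \]
i.e., positive scalar multiples of the forms $a_1,\ldots,a_d$.

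Second I would exhibit an $x$ at which these $d$ ratios are indeed distinct. Since the $a_i$ are pairwise distinct and $a_i[e]>0$, the rescaled forms $a_i[\cdot]/a_i[e]$ remain pairwise distinct, so for each $i\neq j$ the locus $\{x: a_i[x]/a_i[e] = a_j[x]/a_j[e]\}$ is a proper linear subspace of $\RR^n$. A generic $x$ avoids all of these, and for such an $x$ Proposition~\ref{prop:image-cone} gives
\[ \phi_{p,e}(x,\RR^d) = \textup{cone}\{a_1[\cdot],\ldots,a_d[\cdot]\}. \]

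To finish, I would identify this cone with $\Lambda_+(p,e)^*$. From the factorization $p = \prod_i a_i$ together with $a_i[e] > 0$, one reads off that $\Lambda_+(p,e) = \{x : a_i[x] \geq 0 \textup{ for all } i\}$, a polyhedral cone whose polar dual is precisely $\textup{cone}\{a_1,\ldots,a_d\}$ by Farkas' lemma. Combined with the automatic inclusion $\phi_{p,e}(\RR^n,\RR^d) \subseteq \Lambda_+(p,e)^*$ from Theorem~\ref{thm:dualma}, this yields the stated equality. There is no genuine obstacle here: the argument is essentially bookkeeping of the canonical linear functionals for the split polynomial, plus a one-line genericity argument to ensure the chosen $x$ has $d$ distinct eigenvalues.
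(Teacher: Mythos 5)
Your proof is correct and follows essentially the same route as the paper: pick a generic $x$ so that the $d$ ratios $a_i[x]/a_i[e]$ are distinct, observe via the partial-fraction expansion that the canonical linear functionals at $x$ are the scaled forms $a_i[\cdot]/a_i[e]$, apply Proposition~\ref{prop:image-cone}, and identify the resulting finitely generated cone with the polyhedral dual $\Lambda_+(p,e)^*$. Your version just spells out the details (the explicit partial-fraction computation, the genericity argument, and the appeal to Farkas) that the paper leaves implicit.
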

\begin{proof}
Let $x$ be a generic point in $\RR^n$, so that $x$ has $d$ distinct eigenvalues which are 
$\lambda_i(x) = a_i[x]/a_i[e]$ for $i=1,2,\ldots,d$. Since the eigenvalue functions are linear, 
the canonical linear functions are simply $\lambda_i'(x)[u] = a_i[u]/a_i[e]$ for $i=1,2,\ldots,d$.
From Proposition~\ref{prop:image-cone} we know that 
\[ \phi_{p,e}(x,\RR^d) = \textup{cone}\{\lambda_1'(x),\ldots,\lambda_d'(x)\}.\]
This is the cone generated by the linear forms $a_1,\ldots,a_d$, which is just  $\Lambda_+(p,e)^*$. 
\end{proof}
Another case in which the image of $\phi_{p,e}$ is the dual cone is when $p$ is the determinant restricted to symmetric matrices.
\begin{proposition}
Let $p(X) = \det(X)$ be the determinant restricted to $d\times d$ real
symmetric matrices. Then $\phi_{p,I}(\cS^d,\RR^d) = \Lambda_+(p,I)^*$.
\end{proposition}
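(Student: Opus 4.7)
The plan is to combine the explicit computation of Example~\ref{eg:hermdeth} with a simple spectral-theoretic construction. By that example,
\[ y^T H_{p,I}(X)[U] y = \tr\bigl((q_y(-X))^2 U\bigr), \]
where $q_y(t) = y_1 + y_2 t + \cdots + y_d t^{d-1}$. Identifying $(\mathcal{S}^d)^*$ with $\mathcal{S}^d$ via the trace inner product, the dual cone $\Lambda_+(p,I)^*$ is the cone of PSD matrices, and the image $\phi_{p,I}(\mathcal{S}^d, \RR^d)$ corresponds to the set of matrices of the form $(q_y(-X))^2$, with $X\in \mathcal{S}^d$ and $\deg q_y \le d-1$. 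Since $q_y(-X)$ is a polynomial in a symmetric matrix and hence symmetric, every such matrix is PSD; this gives the inclusion $\phi_{p,I}(\mathcal{S}^d,\RR^d)\subseteq \Lambda_+(p,I)^*$ (which we already know from Theorem~\ref{thm:dualma}).

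The content is in the reverse inclusion. Given $A\succeq 0$, spectrally decompose $A = V\Lambda V^T$ with $\Lambda = \diag(\alpha_1,\ldots,\alpha_d)$ and $\alpha_i\ge 0$. Choose any diagonal matrix $D = \diag(\mu_1,\ldots,\mu_d)$ with \emph{distinct} real entries, and set $X = -V D V^T$. By Lagrange interpolation at the $d$ distinct nodes $\mu_1,\ldots,\mu_d$, there is a polynomial $q$ of degree at most $d-1$ with $q(\mu_i) = \sqrt{\alpha_i}$ for each $i$. Then
\[ q(-X)^2 \;=\; V\, q(D)^2\, V^T \;=\; V\Lambda V^T \;=\; A, \]
so $A$ lies in $\phi_{p,I}(\mathcal{S}^d,\RR^d)$. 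Choosing $y\in\RR^d$ to be the coefficient vector of $q$, we obtain $\phi_{p,I}(X,y) = \tr(A\,\cdot\,) \in \Lambda_+(p,I)^*$, as required.

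There is no real obstacle here: the work has already been done in Example~\ref{eg:hermdeth}, which makes the image of $\phi_{p,I}$ completely explicit in terms of polynomials evaluated at symmetric matrices, and the remaining step reduces to the elementary fact that one can prescribe the values of a degree $d-1$ polynomial at $d$ distinct points. It is worth noting that this argument goes through even when $A$ is singular (we simply assign $\sqrt{\alpha_i} = 0$ at the corresponding nodes), so the image of $\phi_{p,I}$ captures the entire boundary of $\Lambda_+(p,I)^*$, unlike the situation in Example~\ref{eg:scubic}.
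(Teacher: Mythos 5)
Your argument is correct, and it rests on the same foundation as the paper's (the explicit computation in Example~\ref{eg:hermdeth} giving $y^T H_{p,I}(X)[U] y = \tr((q_y(-X))^2 U)$), but the construction you use to hit a given $A\succeq 0$ is genuinely different. The paper freezes the polynomial: it takes $y = e_2$ so that $q_y(t) = t$, and then simply sets $X = Z^{1/2}$, giving $q_y(-X)^2 = X^2 = Z$ in one line. You instead fix a symmetric $X$ with the same eigenvectors as $A$ but with arbitrary distinct eigenvalues, and then use Lagrange interpolation at those $d$ nodes to produce a $q$ of degree at most $d-1$ with $q(\mu_i) = \sqrt{\alpha_i}$; this is exactly the same interpolation device the paper uses in the converse direction of the proof of Theorem~\ref{thm:main1} and in Proposition~\ref{prop:image-cone}. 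The paper's choice is shorter and requires no case discussion, but your version makes visible the degree of freedom in the parameterization (you can realize the same dual element from any $X$ sharing eigenvectors with $A$ and having simple spectrum), and it lines up naturally with the canonical-linear-functional machinery of Section~\ref{sec:clf}, where $\phi_{p,I}(X,\RR^d)$ is seen to be the conic hull of the rank-one functionals $U\mapsto \tr(P_i(X)U)$.
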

\begin{proof}
Since the positive semidefinite cone is self-dual it follows that an arbitrary element of $\Lambda_+(p,I)^*$ 
has the form $U \mapsto \tr(ZU)$ where $Z\psd 0$. 
Let $y = e_2 = (0,1,0,\ldots,0)$ so that $q_y(t) = t$ in the notation of Example~\ref{eg:hermdeth}. 
Let $X = Z^{1/2}$ be the positive semidefinite square root of $Z$. 
If $U\in \cS^d$, from Example~\ref{eg:hermdeth} we have that 
\[ \phi_{p,I}(X,y) = \tr((-X)^2U) = \tr(X^2U) = \tr(ZU).\]
Since $Z$ was an arbitrary positive semidefinite matrix,  we are done.
\end{proof}

\section{Connections with interlacers}
\label{sec:discussion}

To conclude, we discuss the connection between our work and interlacing polynomials. 
If $p\in \Hyp_{n,d}(e)$ and $q\in \Hyp_{n,d-1}(e)$, we say that \emph{$q$ interlaces $p$ with respect to $e$}
if the hyperbolic eigenvalues of any $x\in \RR^n$ with respect to $p$ and $q$ satisfy
\[ \lambda_1^p(x)\geq \lambda_1^q(x) \geq \lambda_2^p(x) \geq \cdots \geq \lambda_{d-1}^q(x) \geq \lambda_d^p(x).\]
For a fixed hyperbolic polynomial $p\in \Hyp_{n,d}(e)$ the cone $\textup{Int}_e(p)$
of polynomials $q$ that interlace $p$ with respect to $e$ is a convex cone.
In~\cite{kummer2015hyperbolic,kummer2018spectrahedral} it is shown that the cone of interlacers of $p\in \Hyp_{n,d}(e)$ 
can be described by 
\[ \textup{Int}_e(p) = \{q\in \RR[x]_{d-1}\;:\; B(p_x,q_x) \psd 0\quad\textup{for all $x\in \RR^n$}\}.\]
Much of the development of Sections~\ref{sec:cert} and~\ref{sec:pf} could have been presented
from the point of view of interlacers. This is because $D_up(x)\in
\textup{Int}_e(p)$ if, and only if, $u\in \Lambda_+(p,e)$, so 
the hyperbolicity cone is a section of the cone of interlacers~\cite{kummer2015hyperbolic}. 

The argument we used to show that every ternary hyperbolic polynomial is SOS-hyperbolic actually 
generalizes to give a projected spectrahedral description of the cone of interlacers of 
any ternary hyperbolic polynomial. This was first observed by Kummer, Naldi, and Plaumann~\cite{kummer2018spectrahedral}
via a seemingly quite different proof.
\begin{proposition}[{\cite[Page 17]{kummer2018spectrahedral}}]
If $p\in \Hyp_{3,d}(e)$, then 
\begin{align*}
 \textup{Int}_e(p) = \{q\in \RR[x_1,x_2,x_3]_{d-1}: B(p_x,q_x)\;\; \textup{is a matrix sum of squares}\}.
\end{align*}
\end{proposition}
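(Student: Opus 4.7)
The plan is to adapt the argument given earlier in the paper for showing that $\Hyp_{3,d}(e)=\SOSHyp_{3,d}(e)$, replacing the parameterized B\'ezoutian $B_{p,e}(x)[u]$ (which is $B_d(p_x,D_u p_x)$) by the more general B\'ezoutian $B_d(p_x,q_x)$ associated to an arbitrary interlacer $q$. The ``if'' direction is immediate: if $B(p_x,q_x)$ is a matrix sum of squares, then it is positive semidefinite for every $x\in\RR^3$, so $q\in\textup{Int}_e(p)$ by the Kummer--Plaumann--Vinzant/Kummer--Naldi--Plaumann characterization of interlacers quoted just above the statement.

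For the other direction, let $q\in\textup{Int}_e(p)$. First I would choose a basis $\{e,e',e''\}$ of $\RR^3$ and let $W=\textup{span}\{e',e''\}$, so that every $x\in\RR^3$ can be written uniquely as $x=x_0 e+x_1 e'+x_2 e''$. Since $p$ and $q$ are homogeneous, $p_{x+t_0 e}(t)=p_x(t+t_0)$ and $q_{x+t_0 e}(t)=q_x(t+t_0)$, and hence Lemma~\ref{lem:shift} provides a unimodular polynomial matrix $K(x_0)$ such that
\[
B_d(p_x,q_x)=K(x_0)\,B_d(p_{x_1 e'+x_2 e''},q_{x_1 e'+x_2 e''})\,K(x_0)^T.
\]
Because $K(x_0)$ has determinant one, $B_d(p_x,q_x)$ is a matrix sum of squares in $(x_0,x_1,x_2)$ if and only if the matrix polynomial $M(x_1,x_2):=B_d(p_{x_1 e'+x_2 e''},q_{x_1 e'+x_2 e''})$ is a matrix sum of squares in the two variables $(x_1,x_2)$.

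The remaining step is to verify that $M$ falls into the scope of the result of Blekherman, Plaumann, Sinn, and Vinzant cited in \cite[Remark 5.10]{blekherman2016sums}: every symmetric matrix polynomial in two variables whose entries are homogeneous forms and which is positive semidefinite on $\RR^2$ is a matrix sum of squares. Each entry of $M$ is bilinear in the coefficients of $p_{x_1 e'+x_2 e''}$ and $q_{x_1 e'+x_2 e''}$, which are themselves homogeneous in $(x_1,x_2)$, so $M$ is bihomogeneous (in particular homogeneous entry-by-entry). Moreover $M(x_1,x_2)\psd 0$ for all $(x_1,x_2)\in\RR^2$ because $B(p_x,q_x)\psd 0$ on all of $\RR^3$ by hypothesis, and this property is preserved under restriction to $W$. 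Invoking the Blekherman--Plaumann--Sinn--Vinzant result then gives that $M$ is a matrix sum of squares, and together with the shift identity this finishes the proof.

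There is no real obstacle beyond the two ingredients: the shift invariance of the B\'ezoutian under a unimodular congruence (Lemma~\ref{lem:shift}), which reduces the three-variable problem to a two-variable problem, and the bivariate matrix Hilbert-type theorem of \cite{blekherman2016sums}, which is what makes the argument particular to $n=3$. The only subtle bookkeeping is checking homogeneity of the reduced matrix $M$, which is automatic from homogeneity of $p$ and $q$.
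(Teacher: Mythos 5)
Your proof is correct and takes essentially the same route as the paper: reduce to a two-dimensional subspace not containing $e$ via the shift invariance of the B\'ezoutian (Lemma~\ref{lem:shift}), then invoke~\cite[Remark 5.10]{blekherman2016sums}. The only difference is that you spell out the unimodular congruence for $B_d(p_x,q_x)$ with a general interlacer $q$ in place of $D_up_x$, a step the paper leaves implicit by saying ``recall''.
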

\begin{proof}
Recall that $B(p_x,q_x)$ is a matrix sum of squares if, and only if, the restriction to a two-dimensional subspace
not containing $e$ is a matrix sum of squares. But, after choosing appropriate coordinates, this is a matrix with 
entries that are each homogeneous forms in two variables, and so is a matrix sum of squares 
via~\cite[Remark 5.10]{blekherman2016sums}.
\end{proof}
If $p$ is hyperbolic and has degree two, then the cone of interlacers of $p$ with respect to $e$ is simply the 
set of linear forms that are nonnegative on the cone $\Lambda_+(p,e)$. This is the dual cone,
$\Lambda_+(p,e)^*$, which is again a quadratic cone, and so is a spectrahedron. Another case 
where we might expect the cone of interlacers to have a nice description is the case of 
hyperbolic polynomials of degree three in four variables.
\begin{question}
If $p\in \Hyp_{4,3}(e)$, is the cone of interlacers $\textup{Int}_e(p)$ a projected spectrahedron?
\end{question}

\paragraph{Acknowledgements}
I would like to thank Amir Ali Ahmadi, Petter Br\"and\'en, Hamza Fawzi, Mario Kummer, Simone Naldi, Pablo Parrilo, 
Levent Tun{\c{c}}el, and Cynthia Vinzant for helpful discussions and correspondence related to various aspects of this work, 
and the anonymous referees for their thoughtful suggestions that have improved the article.

\bibliographystyle{alpha}
\bibliography{NNHYP-bib}

\appendix
\section{Relating B\'ezoutians and Hankel matrices}
\label{app:prelim}
In this appendix we prove Proposition~\ref{prop:cong-uni}. Before doing so, we establish a slightly simpler statement
in which the dimension of the B\'ezoutian and Hankel matrices is the same as $\textup{deg}(a)$.
 \begin{proposition}
\label{prop:conj1}
 If $a(t) = a_0 + a_1t + \cdots + t^d$ is a monic polynomial of degree $d$ 
 and $b$ is a polynomial of degree strictly less than $d$, then 
 \begin{equation*}
 	 B_d(a,b) = B(a,1)H_d\!\left(\frac{b}{a}\right)B(a,1)\quad\textup{where}\quad
		[B(a,1)]_{ij} = \begin{cases} a_{i+j-1} & \textup{if $1\leq i+j-1\leq d-1$}\\
 		1 & \textup{if $i+j-1 = d$}\\
 		0 & \textup{otherwise}\end{cases}
 \end{equation*}
 is the (symmetric and unimodular) B\'ezoutian of $a$ and the constant polynomial $1$.
 \end{proposition}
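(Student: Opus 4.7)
The plan is to prove the identity first in the generic case when $a$ has $d$ pairwise distinct roots $\alpha_1,\ldots,\alpha_d$ (over $\mathbb{C}$), and then extend by continuity, since both sides are polynomial in the coefficients of $a$ and $b$ when $a$ is monic. In the generic case, I would exploit two complementary factorizations: a Vandermonde factorization of $H_d(b/a)$ coming from partial fractions, and a Lagrange-interpolation factorization of the B\'ezoutian itself.

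For the Hankel factorization, the partial fraction expansion $\frac{b(t)}{a(t)} = \sum_{i=1}^d \frac{c_i}{t-\alpha_i}$ with $c_i = b(\alpha_i)/a'(\alpha_i)$ gives, upon Laurent expansion at infinity, the coefficients $h_m = \sum_i c_i \alpha_i^{m-1}$, hence $H_d(b/a) = V\,\mathrm{diag}(c)\,V^T$ where $V$ is the Vandermonde matrix whose $i$-th column is $v_i = (1,\alpha_i,\ldots,\alpha_i^{d-1})^T$. For the B\'ezoutian side, set $K(t,s) := \frac{a(t)b(s)-b(t)a(s)}{t-s}$; since $K(\cdot,s)$ is a polynomial of degree at most $d-1$ in $t$ with $K(\alpha_i,s) = b(\alpha_i)\,a(s)/(s-\alpha_i)$, Lagrange interpolation at the points $\alpha_i$ yields $K(t,s) = \sum_{i=1}^d c_i \cdot \frac{a(t)}{t-\alpha_i} \cdot \frac{a(s)}{s-\alpha_i}$.

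The bridge between these two factorizations is the identity
\[
\frac{a(t)}{t-\alpha_i} \;=\; U(t)^T B(a,1)\, v_i, \qquad \text{where } U(t) := (1,t,\ldots,t^{d-1})^T.
\]
To establish it I would first compute, directly from the stated formula for $B(a,1)$, that the $i$-th entry of $B(a,1)U(t)$ is $f_i(t) := \sum_{k=i}^d a_k t^{k-i}$, i.e., the nonnegative-power part of $t^{-i}a(t)$; and then expand $\frac{a(t)}{t-\alpha_i} = \sum_{m\geq 1}\alpha_i^{m-1}\,t^{-m}a(t)$ as a formal Laurent series at infinity and take its polynomial part, obtaining $\sum_{m=1}^d \alpha_i^{m-1} f_m(t) = U(t)^T B(a,1) v_i$ (using $[t^{-m}a(t)]_+ = 0$ for $m>d$).

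Combining these pieces, and using that $B(a,1)$ is symmetric, yields
\[
K(t,s) \;=\; \sum_i c_i\,\bigl(U(t)^T B(a,1)v_i\bigr)\bigl(v_i^T B(a,1) U(s)\bigr) \;=\; U(t)^T B(a,1)\,H_d(b/a)\,B(a,1)\,U(s),
\]
which, matched against the defining identity $K(t,s) = U(t)^T B_d(a,b) U(s)$, gives $B_d(a,b) = B(a,1) H_d(b/a) B(a,1)$ for generic $a$, and hence for all monic $a$ of degree $d$ by a density argument. The main obstacle is the Laurent-series identity of the third step tying $a(t)/(t-\alpha_i)$ to the columns of $B(a,1)$; once that is clean, the rest is elementary linear algebra and polynomial interpolation.
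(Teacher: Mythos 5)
Your argument is correct, and it takes a genuinely different route from the paper's. The paper works entirely with formal Laurent series at infinity: it multiplies the identity
\[
\frac{\tfrac{b(t)}{a(t)}-\tfrac{b(s)}{a(s)}}{s-t}=\sum_{p,q\ge 1}h_{p+q-1}t^{-p}s^{-q}
\]
on both sides by $a(t)$ and $a(s)$ to recover the B\'ezoutian kernel, then reads off the coefficient of $t^{i-1}s^{j-1}$ after a change of summation index. This is a purely formal manipulation that works for every monic $a$ at once, with no genericity or limiting step, and is essentially three lines. Your approach instead factors both sides through the spectral data of $a$: $H_d(b/a)=V\,\mathrm{diag}(c)\,V^T$ from partial fractions, the Lagrange-interpolation decomposition $K(t,s)=\sum_i c_i\,\frac{a(t)}{t-\alpha_i}\frac{a(s)}{s-\alpha_i}$, and the bridge identity $\frac{a(t)}{t-\alpha_i}=U(t)^T B(a,1)v_i$ that identifies $B(a,1)$ as the change of basis carrying Vandermonde columns to the ``truncated Horner'' polynomials $f_m(t)=\sum_{k\ge m}a_kt^{k-m}$. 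This buys a cleaner \emph{explanation} of why $B(a,1)$ is the right conjugating matrix, at the cost of restricting to simple roots and invoking a density argument (valid, since the entries of $H_d(b/a)$ are polynomials in the coefficients of $a$ and $b$, a fact you state but should briefly justify, e.g.\ via the triangular recurrence $b_j=\sum_{m\ge 1}a_{j+m}h_m$). Both proofs are sound; yours is more structural, the paper's more economical and coordinate-free.

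One small presentational remark: the key computation verifying the bridge identity is cleanest if you note directly that $\frac{a(t)}{t-\alpha_i}=\frac{a(t)-a(\alpha_i)}{t-\alpha_i}=\sum_{j\ge 1}t^{j-1}\sum_{m\ge j}a_m\alpha_i^{m-j}$, which is immediately $\sum_m \alpha_i^{m-1}f_m(t)$ without passing through the Laurent-series/polynomial-part argument; this removes the only step in your sketch that needs care about convergence of formal series.
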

\begin{proof}
This result is established in~\cite[Proposition 7.15]{bini2011numerical}. We reproduce the proof here because there are 
some confusing typographic errors in~\cite[page 75]{bini2011numerical}. 
The argument presented there uses the fact that the Hankel matrix $H_d\!\left(\frac{b}{a}\right)$ satisfies the identity
\[ \frac{\frac{b(t)}{a(t)} - \frac{b(s)}{a(s)}}{s-t} = \sum_{p,q=1}^{\infty}h_{p+q-1}t^{-p}s^{-q}.\]
Combining this with the defining identity of the B\'ezoutian~\eqref{eq:beqid} we obtain 
\begin{equation*}
\sum_{i,j=1}^{d} [B(a,b)]_{ij}t^{i-1}s^{j-1} 
= a(t)\frac{\frac{b(t)}{a(t)} - \frac{b(s)}{a(s)}}{s-t}a(s) = 
\sum_{m,\ell=0}^{d}\sum_{p,q=1}^{\infty}a_{m}h_{p+q-1}a_\ell t^{m-p}s^{\ell-p}.
\end{equation*}
Changing variables via $i= m-p+1$ and $j = \ell -q+1$, and defining $a_i = 0$ for $i>d$, gives
\[ \sum_{i,j=1}^{d} [B(a,b)]_{ij}t^{i-1}s^{j-1} = \sum_{i,j=1}^{d}\left[\sum_{p,q=1}^{\infty}a_{i+p-1}h_{p+q-1}a_{j+q-a}\right]t^{i-1}s^{j-1}\]
from which the matrix identity follows.

The fact that $B(a,1)$ is unimodular follows directly from the fact that, if we reverse the order of the rows of $B(a,1)$ (which 
either preserves the determinant or changes its sign), we obtain a lower triangular matrix with unit diagonal. 
\end{proof}
We now consider the case in which the dimension of the B\'ezoutian and Hankel matrices is possibly larger than the degree of the 
polynomial $a$.
\begin{proof}[{Proof of Proposition~\ref{prop:cong-uni}}]
Let $\tilde{a}(t) = t^{m-d}a(t)$ and $\tilde{b}(t) = t^{m-d}b(t)$. First note that 
\begin{align*}
	B_m(\tilde{a},\tilde{b}) = \begin{bmatrix} 0_{m-d,m-d} & 0_{m-d,d}\\0_{d,m-d} & B_d(a,b)\end{bmatrix} & \! =\! 
	\begin{bmatrix} 0_{m-d,d} & I_{m-d}\\I_{d} & 0_{d,m-d}\end{bmatrix} \!\!
	\begin{bmatrix} B_d(a,b) & 0_{d,m-d}\\0_{m-d,d} & 0_{m-d,m-d}\end{bmatrix}\!\!
	\begin{bmatrix} 0_{m-d,d} & I_{m-d}\\I_{d} & 0_{d,m-d}\end{bmatrix}^T\\
&\! = \! 
	\begin{bmatrix} 0_{m-d,d} & I_{m-d}\\I_{d} & 0_{d,m-d}\end{bmatrix}B_m(a,b) 
	\begin{bmatrix} 0_{m-d,d} & I_{m-d}\\I_{d} & 0_{d,m-d}\end{bmatrix}^T.
\end{align*}
By Proposition~\ref{prop:conj1},
and the fact that $\frac{\tilde{b}}{\tilde{a}} = \frac{b}{a}$, we see that
\begin{align*}
	 B(\tilde{a},1)H_m\!\left(\frac{\tilde{b}}{\tilde{a}}\right)B(\tilde{a},1) &= 
B(\tilde{a},1)H_m\!\left(\frac{b}{a}\right)B(\tilde{a},1)\\ & =  B_m(\tilde{a},\tilde{b}) =
\begin{bmatrix} 0_{d,m-d} & I_{d}\\I_{m-d} & 0_{m-d,d}\end{bmatrix} B_m(a,b)\begin{bmatrix} 0_{d,m-d} & I_{d}\\I_{m-d} & 0_{m-d,d}\end{bmatrix}^T.
\end{align*}
Since $B(\tilde{a},1)$ is symmetric and unimodular and has entries that are linear in the coefficients of $a$, we see that 
\[ M(a) = \begin{bmatrix} 0_{d,m-d} & I_{d}\\I_{m-d} & 0_{m-d,d}\end{bmatrix}^{-1}B(\tilde{a},1)\]
is unimodular and has entries that are linear in the coefficients of $a$, completing the proof.
\end{proof}

\end{document}